\documentclass[10pt, twoside]{amsart}

\usepackage[pagewise]{lineno}


\usepackage{xpatch}
\makeatletter   
\xpatchcmd{\@tocline}
{\hfil\hbox to\@pnumwidth{\@tocpagenum{#7}}\par}
{\ifnum#1<0\hfill\else\dotfill\fi\hbox to\@pnumwidth{\@tocpagenum{#7}}\par}
{}{}
\makeatother    

\makeatletter
 \def\l@subsection{\@tocline{2}{0pt}{4pc}{6pc}{}}
\def\l@subsubsection{\@tocline{3}{0pt}{8pc}{8pc}{}}
 \makeatother

\usepackage[utf8]{inputenc}
\usepackage{hyperref}

\usepackage{graphicx}              
\usepackage{amsmath, bm}               
\usepackage{amsfonts}              
\usepackage{amsthm}                
\usepackage{amssymb}
\usepackage{mathtools}             

\usepackage{enumitem}

\usepackage[all]{xy}
\usepackage{amscd}

\DeclareSymbolFont{symbolsC}{U}{pxsyc}{m}{n}
\DeclareMathSymbol{\coloneqq}{\mathrel}{symbolsC}{"42}
\allowdisplaybreaks
\usepackage{color}
\usepackage{fp}

\newcommand{\Z}{\mathbb{Z}}

\newcommand{\C}{\mathbb{C}}
\newcommand{\R}{\mathbb{R}}

\newcommand{\Li}{\mathcal{B}}
\newcommand{\KG}{\mathcal{K}_G^p}

\newcommand{\cj}{\overline}
\newcommand{\op}{\mathrm}

\DeclarePairedDelimiterX{\normb}[1]{\lVert}{\rVert}{#1}

\numberwithin{equation}{section}
\newtheorem{theorem}{Theorem}[section]
\newtheorem{lemma}[theorem]{Lemma}
\newtheorem{proposition}[theorem]{Proposition}
\newtheorem{corollary}[theorem]{Corollary}

\let\origproofname\proofname
\renewcommand{\proofname}{\upshape\textbf{\origproofname}}

\theoremstyle{definition}

\newtheorem{definition}[theorem]{Definition}
\newtheorem{remark}[theorem]{Remark}
\newtheorem{notation}[theorem]{Notation}

\newtheorem*{theorem*}{Theorem}

\makeatletter
\newtheorem*{rep@theorem}{\rep@title}
\newcommand{\newreptheorem}[2]{%
\newenvironment{rep#1}[1]{%
 \def\rep@title{#2 \ref{##1}}%
 \begin{rep@theorem}}%
 {\end{rep@theorem}}}
\makeatother

\newreptheorem{theorem}{Theorem}

\usepackage{indentfirst}


\begin{document}

\title{Twisted crossed products of Banach algebras}

\author{Alonso Delfín}
\email{alonso.delfin@colorado.edu}
\urladdr{https://math.colorado.edu/\symbol{126}alde9049}

\author{Carla Farsi}
\email{carla.farsi@colorado.edu}
\urladdr{https://www.colorado.edu/math/carla-farsi}

\author{Judith Packer}
\email{judith.jesudason@colorado.edu}
\urladdr{https://math.colorado.edu/\symbol{126}packer}
\address{Department of Mathematics, University of Colorado, Boulder, CO 80309-0395}

\date{\today}

\subjclass[2020]{Primary 46H05, 46H15, 46H35, 47L65; Secondary 43A15, 43A20, 47L10}
\thanks{\textsc{Department of Mathematics, University of Colorado, Boulder CO 80309-0395, USA}
}

\maketitle

\begin{abstract}
Given a locally compact group $G$, a nondegenerate Banach algebra $A$ with a contractive approximate identity, a twisted action $(\alpha, \sigma)$ of $G$ on $A$, and a family $\mathcal{R}$ of uniformly bounded representations of $A$ on Banach spaces, we define the twisted crossed product $F_\mathcal{R}(G,A,\alpha, \sigma)$.
When $\mathcal{R}$ consists of contractive representations, we show that $F_\mathcal{R}(G,A,\alpha, \sigma)$ is a Banach algebra with a contractive approximate identity, which can also be characterized by an  isometric universal property. 

As an application, we specialize to the $L^p$-operator algebra setting, defining both the $L^p$-twisted crossed product and the reduced version. Finally, we give a generalization of the so--called \textit{Packer–Raeburn trick} to the $L^p$-setting, showing that any $L^p$-twisted crossed product is ``stably'' isometrically isomorphic to an untwisted one.
\end{abstract}
\tableofcontents
\section{Introduction}

Since the late 1950s, the crossed product construction in C*-algebras has been an important tool both to construct novel examples and to study group representations. More generally, works on twisted actions of discrete groups on C*-algebras and on C*-twisted crossed products can be traced back to the next decade in the classical papers \cite{Leptin, Zeller, BusSmi70}. Two main goals of this paper are: (1) to introduce a general definition of crossed products associated with twisted actions of locally compact groups on Banach algebras and (2) to recover certain $L^p$-operator algebra versions of well-known theorems for twisted crossed products of C*-algebras.

Particular cases of twisted crossed products of Banach algebras are already present in \cite{Bardadyn_2024}  for discrete groups acting on locally compact Hausdorff spaces and also in \cite[Section 3]{HetOrt23}, where $F_\lambda^p(G, \sigma)$, the reduced twisted $L^p$-group algebra of a locally compact group $G$, is thoroughly studied and shown to be a full invariant for the pair $(G,\sigma)$ when $p\neq 2$. The general case of a full twisted crossed product of a general Banach algebra by a locally compact group has not yet been considered and involves more technical considerations concerning multiplier algebras. In this paper, we aim to fill this natural gap. To avoid technicalities, we opt to only consider Banach algebras that are nondegenerately represented on a Banach space and that have a contractive approximate identity (cai). Under this setting, for any twisted Banach algebra dynamical system $(G,A, \alpha, \sigma)$ (see Definition \ref{TwistedBanachSystem}) we have defined a universal crossed product $F_{\mathcal{R}}(G,A, \alpha, \sigma)$ associated to each family $\mathcal{R}$ of contractive covariant representations of the system. Specializing to the case when $A$ is an $L^p$-operator algebra and $\mathcal{R}$ consists of all contractive covariant representations on $L^p$-spaces, we give a $p$-analogue of the so-called \emph{Packer–Raeburn trick}, by showing that, when $p \in (1, \infty)$, the full $L^p$-twisted crossed product of $A$ by a locally compact group $G$ is ``stably'' isometrically isomorphic to the full untwisted crossed product of the left spatial tensor product of the compact operators on $L^p(G)$ with $A$. This result requires a deep understanding 
of different tensor products available for $L^p$-operator algebras (see Definitions \ref{defi_LpT_P} and \ref{DefLPOPtp}), which all collapse to the same construction in the C*-setting when one of the factors is the compact operators on $L^2(G)$. We chose to use the left $p$-stabilization of an $L^p$-operator algebra (see Definition \ref{def_stabilization}) for our main results because it is, by construction, independent of the representation of $A$ chosen. 

A brief history of the overall theory of crossed products for Banach algebras begins with the ordinary (i.e., untwisted) crossed products of Banach algebras by locally compact groups associated to different classes of covariant representations, which have been thoroughly studied in the series of (still unpublished\footnote{Prof. Marcel de Jeu kindly informed us that these works were accepted on the condition that they be merged into a single memoir, a task that is still in progress.}) papers \cite{CPBAI_11, CPBAII_13, CPBAIII_13}. A paper specializing only on full and reduced crossed products for Banach algebras with left bounded approximate identities was published shortly after, see \cite{LiXu14}.
At the same time, N.C. Phillips  in \cite{ncp2013CP} adapted  the definitions in \cite{CPBAI_11} to $L^p$-operator algebras, yielding definitions for $F^p(G, A, \alpha)$ and $F^p_{\op{r}}(G, A, \alpha)$, 
the full and reduced crossed products of an $L^p$-operator algebra $A$ by a (second countable) locally compact group $G$. As a particular instance the $L^p$-group algebras $F^p(G)$ and $F^p_{\op{r}}(G)$ are recovered when $G$ trivially acts on $\C$. It is worth mentioning that some C*-results do not always carry to the $L^p$-setting, 
such as Takai duality for $L^p$-crossed products, which is shown to fail in general \cite{WaZh23}.
More recently, Phillips' construction has also been generalized to étale groupoids  \cite{GardLup17} and to both $L^p$-twisted group algebras and reduced twisted groupoid $L^p$-operator algebras of an étale groupoid, see \cite{HetOrt23, BarKwaAnd24, BKM_JFA_25}.
Other papers have appeared that deal with these algebras in various degrees of generality, such as \cite{ChoiGATh24} where the C*-core of the (reduced and untwisted) $L^p$-crossed products by discrete groups is studied, and \cite{Flo25} which deals with the computation of the topological stable rank and real rank for the (reduced) $L^p$-twisted crossed product of a discrete group $G$ of subexponential growth acting on the commutative algebra $C_0(X)$. 

\subsubsection*{Standing assumptions} In this work, we only work with nondegenerate Banach algebras (that is, those that have a two sided cai) and therefore can be nondegenerately represented on a Banach space. This is done in order to make sure that the multiplier algebra of such algebras is well behaved and still an $L^p$-operator algebra when the original Banach algebra is represented on an $L^p$-space. For the most part we let $G$ be any locally compact group, but starting in Section \ref{S:T_LP_CP} we will require $G$ to be second countable, which allows us to use some results from \cite{ncp2013CP}. In our definition of a twisted Banach algebra dynamical system $(G,A, \alpha, \sigma)$ (see Definition \ref{TwistedBanachSystem})
we require the map $\alpha \colon G \to \op{Aut}(A)$ to be strongly continuous and the twist $\sigma \colon G \times G \to \op{Inv}_1(M(A))$ 
to be jointly strictly continuous. Thus, our assumptions on the twisted action are stronger 
than in \cite{PacRae89, PacRae90}, where it is only required that both $\alpha$ and $\sigma$ are Borel maps. This simplifies many arguments, avoids any discussion of Polish groups 
related to Banach algebras, and it allows us to take results from
\cite{CPBAI_11, CPBAII_13, CPBAIII_13, ncp2013CP} in which also the stronger condition of continuity is assumed. 

\subsubsection*{Organization of the paper and main results} In Section \ref{S:Prelim} we establish the  basic notation and present some known facts about multiplier algebras of Banach algebras, as well as basic facts about the spatial tensor product of $L^p$-spaces and tensor products of $L^p$-operator algebras. In Section \ref{S:T_B_C_P}, combining the paths paved out for C*-algebras in \cite{PacRae89} and for untwisted actions on Banach algebras in \cite{CPBAI_11}, we define $(\alpha, \sigma)$, a twisted action of a locally compact group $G$ on a nondegenerate Banach algebra $A$ with a cai, which in turn gives rise to a twisted Banach algebra dynamical system $(G,A,\alpha, \sigma)$. In this section we also construct the $L^1$-convolution algebra $L^1(G,A,\alpha, \sigma)$ associated to a twisted dynamical system, by extending some of the methods used in \cite{BusSmi70, Leptin} for the involutive case.  Mimicking the involutive case, we show in Proposition \ref{BAIL1} that $L^1(G,A,\alpha, \sigma)$ always has a cai. In addition, we 
introduce twisted covariant representations as well as the twisted crossed product $F_{\mathcal{R}}(G, A, \alpha, \sigma)$ associated with a family of uniformly bounded covariant representations $\mathcal{R}$ of $(G,A,\alpha, \sigma)$,  which is obtained as a completion of $L^1(G,A,\alpha, \sigma)$, see Definition \ref{def:R-tw-crossed-pr}.  The main theorem in this section shows that if $C_\mathcal{R}$ is the uniform bound of $\mathcal{R}$, then the twisted crossed product has a $C_\mathcal{R}$-approximate identity: 
\begin{reptheorem}{F_R:BAI}
Let $(G, A, \alpha, \sigma)$ be a twisted Banach algebra dynamical system and let 
$\mathcal{R}$ be a nonempty uniformly bounded class of covariant representations of $(G, A, \alpha, \sigma)$. Then $F_{\mathcal{R}}(G, A, \alpha, \sigma)$ has a $C_\mathcal{R}$-bounded approximate identity. 
\end{reptheorem}

Starting in Section \ref{S:U_P} we restrict to families of contractive covariant representations $\mathcal{R}$ with $C_\mathcal{R}\leq 1$. Here, following the untwisted case \cite[Theorem 2.61]{williams_2007} and \cite[Theorem 4.4]{CPBAII_13}, we obtain, in \textbf{Theorem \ref{Universal_R_Prop}}, a universal property for $F_{\mathcal{R}}(G,A,\alpha, \sigma)$. This universal property, which follows from a careful treatment of both the twist and multiplier algebras,  reduces to the one obtained by the third author and I. Raeburn in \cite[Proposition 2.7]{PacRae89} when $A$ is a C*-algebra and $\mathcal{R}$ consists of all the covariant representations of $(G,A,\alpha, \sigma)$ acting on Hilbert spaces. It also reduces to the universal property obtained for untwisted Banach algebra dynamical systems in \cite[Theorem 4.4]{CPBAII_13}. The main results in our final section, Theorem \ref{Ext_Equiv_ISOM} and Theorem \ref{PacRae-TRICK}, are both consequences of our general universal property. 

In Section \ref{S:T_LP_CP} we specialize to the cases where the contractive covariant representations acting on $L^p$-spaces give rise to $F^p(G,A, \alpha, \sigma)$, the full twisted $L^p$-crossed product, and also 
to $F_{\op{r}}^p(G,A, \alpha, \sigma)$, the reduced twisted $L^p$-crossed product. A natural question, which we do not address in this paper, is to determine if the amenability results (such as \cite[Theorem 3.7]{GArThi15}, \cite[Lemma 7.5]{ChoiGATh24}, and \cite[Theorem 5.5]{HetOrt23}) can be generalized to the twisted setting. In Section \ref{S:P_R_T} we introduce a general notion of exterior equivalence for twisted actions. Roughly speaking, 
two twisted actions $(\alpha, \sigma)$ and $(\beta, \omega)$ of $G$ on $A$ are exterior equivalent via $\theta  \colon G \to \op{Inv}_1(M(A))$ if 
\begin{enumerate}
\item $x\mapsto \theta_xa$ and $x\mapsto \theta_xa$ are continuous maps $G \to A$ for all $a \in A$,
\item $\beta_x = \op{Ad}(\theta_x)\circ \alpha_x$ for all $x \in G$, 
\item $\omega_{x,y}\theta_{xy}=\theta_x\alpha_x(\theta_y)\sigma_{x,y}$.
\end{enumerate}
Given an exterior equivalence, to each family $\mathcal{R}$ of contractive representations of $(G,A, \alpha, \sigma)$ 
we associate a family $\mathcal{R}_\theta$ of contractive representations of $(G,A, \beta, \omega)$. 
We then prove one of our main results about how equivalent twisted actions induce isometrically isomorphic twisted crossed products: 
\begin{reptheorem}{Ext_Equiv_ISOM}
Let $G$ be a locally compact group, let 
$A$ be a nondegenerate Banach algebra with a cai, and let $(\alpha, \sigma)$ and $(\beta, \omega)$ be two exterior equivalent twisted actions of $G$ on $A$ via $\theta$. Then the crossed product $F_{\mathcal{R}}(G, A, \alpha, \sigma)$ is isometrically isomorphic 
to $F_{\mathcal{R_\theta}}(G, A, \beta, \omega)$.
\end{reptheorem}
As a consequence, in Corollary \ref{Cor:LpExtEquiv} we show that exterior equivalent twisted actions yield
 isometrically isomorphic full twisted $L^p$-crossed products, which is then used to 
produce our final main theorem concerned with a general $p$-version of the Packer-Raeburn trick. 
For the $p$-version of the untwisting trick, we work with a left $p$-stabilization of the algebra $A$ by the compact operators $\KG \coloneqq \mathcal{K}(L^p(G))$. That is, 
we consider the left spatial tensor product $\KG \otimes_{\op{lsp}}^p A$, which is constructed by
 fixing the canonical representation of $\KG \subseteq \mathcal{B}(L^p(G))$ and defined so that the norm on  $\KG \otimes_{\op{lsp}}^p A$ is independent of the representation 
chosen for the $L^p$-operator algebra $A$. 
\begin{reptheorem}{PacRae-TRICK}
Let $p \in (1,\infty)$, let $A$ be a nondegenerate separably representable $L^p$-operator algebra with a cai, 
and let $(G, A, \alpha, \sigma)$ be a twisted dynamical system. 
Then there is an action $\beta$ of $G$ on $\KG \otimes_{\op{lsp}}^p A$
such that $\KG \otimes_{\op{lsp}}^p F^p(G, A, \alpha, \sigma)$ 
is isometrically isomorphic to the untwisted crossed product 
$F^p(G, \KG \otimes_{\op{lsp}}^p A , \beta)$. 
\end{reptheorem}
An immediate consequence of the untwisting trick, that we state in Corollary \ref{cor: Ktheory}, is that the 
$K$-theory of twisted $L^p$-crossed products can be computed with the already developed 
techniques for untwisted $L^p$-crossed products. 

Finally, we also mention that Theorem \ref{Ext_Equiv_ISOM} also holds for the reduced twisted crossed product $F_{\op{r}}^p(G,A, \alpha, \sigma)$. An interesting question, which we do not tackle in this project, is whether the converse holds when $p \neq 2$. That is, does an isometric isomorphism 
between $F_{\op{r}}^p(G,A, \alpha, \sigma)$ and $F_{\op{r}}^p(G,A, \beta, \omega)$ imply that $(\alpha, \sigma)$ and $(\beta, \omega)$ are exterior equivalent? A positive answer to this problem will add to the recent list of rigidity results for $p \neq 2$ such as  \cite[Proposition 5.5]{GArTHi22}, \cite[Theorem 6.7]{ChoiGATh24}, and \cite[Theorem 4.9]{HetOrt23}.

\section*{Acknowledgments} This research was partly supported by: Simons Foundation Collaboration Grant MPS-TSM-00007731 (C.F.) and by the Simons Foundation Collaboration Grant \#1561094 (J.P.). The first author (A.D.) would like to thank Felipe I. Flores for helpful conversations on Banach bundles. The authors would like to thank John Quigg for helpful comments on an earlier version of this manuscript. They would also like to thank the anonymous referee for insightful suggestions that led to improvements of the main results in Section \ref{S:P_R_T}.

\section{Preliminaries}\label{S:Prelim}

\begin{notation}
Let $E$ and $F$ be Banach spaces. We write $\Li(E,F)$ for the space of 
bounded linear maps $a \colon E \to F$, which is a Banach space when equipped with the usual 
operator norm
\[
\| a \|\coloneqq \sup \{ \| a\xi\|_F \colon \| \xi\|_E=1\}.
\]
We denote $\op{Iso}(E) \subseteq \Li(E)\coloneqq \Li(E,E)$ to the group of invertible 
isometries on $E$. The algebra of compact operators on $E$ will be denoted by $\mathcal{K}(E)$. 
\end{notation}

\begin{notation}
We say that a Banach algebra $A$ has a \textit{bounded approximate identity} when there is a net $(e_\lambda)_{\lambda \in \Lambda}$ in $A$ and a constant $C \in \R_{>0}$ such that 
$\| e_\lambda\| \leq C$ 
for all $\lambda \in \Lambda$ and 
such that for each $a \in A$,
\[
\lim_{\Lambda} \| e_\lambda a - a \| = \lim_{\Lambda} \| ae_\lambda  - a \| =0.
\]
 In such case we say that $A$ is \textit{$C$-approximately unital} or that $A$ has a \textit{$C$-approximate identity}. When $C=1$, we say that $A$ has a \textit{contractive approximate identity}, henceforth denoted simply as a cai. 
\end{notation}
\begin{notation}\label{notaRep}
A \textit{representation of a Banach algebra $A$ on a Banach space $E$} will always mean 
a bounded algebra homomorphism $\pi \colon A \to \Li(E)$. We say 
\begin{enumerate}
\item $\pi$ is \textit{contractive} when $\| \pi \| \leq 1$, 
 \item $\pi$ is \textit{isometric} when $\| \pi (a)\|=\|a\|$ for all $a \in A$, 
  \item \label{sep_pi} $\pi$ is \textit{separable} when $E$ is a separable Banach space,
 \item \label{nondeg_pi} $\pi$ is \textit{nondegenerate} when $\pi(A)E \coloneqq \op{span}\{ \pi(a)\xi \colon a \in A, \xi \in E \}$ is a dense subspace of $E$,
 \item $\pi$ is \textit{unital} when $A$ is unital and $\pi(1_A)=\op{id}_E$.
\end{enumerate}
We say $A$ is \textit{representable} on the Banach space $E$ when there is an isometric representation of $A$ on $E$. 
\end{notation}
To achieve the greatest degree of generality,
unless otherwise noted, a Banach algebra $A$ 
will satisfy the next assumption:
\begin{enumerate}[label=(A.\arabic*)]
\item $A$ has a cai.\label{AhasCAI}
\end{enumerate}
An immediate consequence of \ref{AhasCAI} is that $A$ acts nondegenerately on itself via left multiplication (see \cite[Lemma 2.4]{BliDelWel24}). It is therefore safe to also assume that our Banach algebras will always satisfy the following condition:
\begin{enumerate}[label=(A.\arabic*)]
  \setcounter{enumi}{1}
\item $A$ is \textit{nondegenerately representable}. That is, there is a Banach space $E$ and an isometric nondegenerate representation of $A$ on $E$. We often will simply say that $A$ is \textit{nondegenerate}. \label{AhasND_REP}
\end{enumerate}

Recall that Assumption \ref{AhasCAI} is enough to isometrically identify $A$ with a closed
two-sided ideal of $M(A)$, the multiplier algebra of $A$ defined as the algebra of double centralizers for $A$ (see \cite[Section 3]{BliDelWel24} for the general definition and properties of $M(A)$). Indeed, 
this is done via the map $\iota_A \colon A \to M(A)$ defined as 
\begin{equation}\label{iota_A_map}
\iota_A(a) \coloneqq (L_a, R_a),
\end{equation}
where $L_a(b) \coloneqq ab$ and $R_a(b) \coloneqq ba$ for all $a,b \in A$ (see \cite[ Lemma 3.3]{BliDelWel24} for complete details).

It is also well known that Assumption \ref{AhasCAI} above implies that if $\pi$ 
is a nondegenerate representation of $A$ on $E$ (see Notation \ref{notaRep} \eqref{nondeg_pi}), then it extends uniquely to a
unital nondegenerate representation of $M(A)$ on $A$, 
\begin{equation}\label{Eq:hat_pi}
\widehat{\pi} \colon M(A) \to \Li(E),
\end{equation}
satisfying $\widehat{\pi} \circ \iota_A = \pi$ (see \cite[Theorem 4.1, Remark 4.2]{GarThi20} and \cite[Theorem 6.1]{CPBAI_11}). Furthermore, $\widehat{\pi}$ is contractive when $\pi$ is and $\widehat{\pi}$ is isometric when $\pi$ is. This guarantees, using Assumption \ref{AhasND_REP}, an isometric isomorphism
\begin{equation}\label{Ma=Dc}
M(A) \cong \{ t \in \Li(E) \colon t\pi(a) \in \pi(A), \pi(a)t \in \pi(A)\},
\end{equation}
as shown in \cite[Corollary 3.6]{BliDelWel24}. Therefore, after identifying $A$ with its isometric copy 
in $\Li(E)$, each $t \in M(A)$ naturally gives two maps $A \to A$, each one with norm $\|t\|$, defined by $a \mapsto ta \in A$ and $a \mapsto at \in A$. Thus, in this case $M(A)$ is isometrically identified with a subalgebra of $\Li(A)$ via the map $ t \mapsto (a \mapsto ta )$. Furthermore, if $u \in M(A)$ is invertible, we get a map $\op{Ad}(u) \colon A \to A$ 
defined by 
\[
\op{Ad}(u)a \coloneqq uau^{-1}.
\] 
Moreover, since $A$ is canonically embedded as a closed two-sided ideal in $M(A)$, 
any bounded automorphism $\varphi \colon A \to A$ lifts to 
a map $\varphi \colon M(A) \to M(A)$ defined, for any $t \in M(A)$, $a \in A$, 
via
\[
\varphi(t)\varphi(a) = \varphi(t a), \ \varphi(a)\varphi(t) = \varphi(a t).
\]
\begin{notation}
Let $p \in [1,\infty)$ and let $(\Omega, \mathfrak{M}, \mu)$ be a measure space. 
As usual $L^p(\Omega, \mathfrak{M}, \mu)$ is used to denote the space
of $p$-integrable complex valued measurable functions  modulo functions that
vanish a.e.\ $[\mu]$. We will often simply write either $L^p(\mu)$ or $L^p(\Omega,\mu)$
instead of $L^p(\Omega, \mathfrak{M}, \mu)$. 
\end{notation}

\begin{definition}\label{Lp}
Let $p \in [1, \infty)$. A Banach algebra $A$ is an 
\textit{$L^p$-operator algebra} if there is a measure space
$(\Omega, \mathfrak{M}, \mu)$ such that $A$ is represented
on $L^p(\mu)$.  That is, there is an isometric representation $\pi \colon A \to \Li(L^p(\mu))$. 
If $\pi$ is nondegenerate, we say $A$ is a \textit{nondegenerate $L^p$-operator algebra}.
\end{definition}

We refer the reader to \cite[Section 2.3]{Ryan02}
for definitions and facts about Banach space valued measurable 
functions and Bochner integrals. 
We will work extensively with the $p$-Bochner integrable functions: 
\begin{definition}\label{Lebegue-Bochner}
Let $p \in [1, \infty)$, let $(\Omega, \mathfrak{M}, \mu)$ be a measure space, and let $E$ be a Banach space. The \textit{Lebesgue-Bochner $L^p$-space of $(\Omega, \mu; E)$} is 
\[
L^p(\mu; E) \coloneqq L^p(\Omega, \mu; E) \coloneqq 
\{ \xi \colon \Omega \to E \mbox{ measurable} 
\colon 
\textstyle{\int_{\Omega}} \|\xi(\omega)\|^p \  d\mu(\omega) < \infty\}.
\]
Of course $L^p(\mu)=L^p(\mu; \C)$. 
\end{definition}

For $p \in [1, \infty)$, there is a tensor product, called the 
\textit{spatial tensor product} and denoted by $\otimes_p$. 
We refer the reader to \cite[Section 7]{defflor1993} for complete 
details on this 
tensor product. We only describe below the properties we will need. 
If $(\Omega_0, \mathfrak{M}_0, \mu_0)$ is a measure space and $E$ is a 
Banach space, 
then there is an isometric isomorphism 
\[
L^p(\mu_0) \otimes_p E \cong L^p(\mu_0; E), 
\]
such that for any $\xi \in L^p(\mu_0)$ and $\eta \in E$, 
the elementary tensor $\xi \otimes \eta $ 
is sent to the function $\omega \mapsto \xi(\omega)\eta$. 
Furthermore, if $(\Omega_1,  \mathfrak{M}_1, \mu_1)$
is another measure space and $E=L^p(\mu_1)$, then there is 
an isometric isomorphism
\begin{equation}\label{LpTensor}
L^p(\mu_0) \otimes_p L^p(\mu_1) \cong 
L^{p}(\Omega_0 \times \Omega_1, \mu_0 \times \mu_1),
\end{equation}
sending $\xi \otimes \eta$ to the function 
$(\omega_0,\omega_1) \mapsto \xi(\omega_0)\eta(\omega_1)$ 
for every $\xi \in L^p(\mu_0)$
and $\eta \in L^p(\mu_1)$. 
We describe its main properties below. 
The following is part of Theorem 2.16 in \cite{ncp2012AC}, 
except that we have removed the $\sigma$-finiteness assumption 
as in the proof in Theorem 1.1 in \cite{figiel1984}. 
\begin{theorem}\label{SpatialTP}
Let $p \in [1, \infty)$ and for $j \in \{0,1\}$ let 
$(\Omega_j, \mathfrak{M}_j, \mu_j)$, 
$(\Lambda_j, \mathfrak{N}_j, \nu_j)$ be measure spaces.
\begin{enumerate}
\item Under the identification in Equation \eqref{LpTensor}, the algebraic tensor product $L^p(\mu_0) \otimes L^p(\mu_1) = \op{span}\{ \xi \otimes \eta  \colon \xi \in L^p(\mu_0), \eta \in L^p(\mu_1)\}$ 
is a dense subset of 
$L^{p}(\Omega_0 \times \Omega_1, \mu_0 \times \mu_1)$.  \label{denseTensor_p}
\item $\| \xi \otimes \eta \|_p = \| \xi\|_p\|\eta\|_p$  for every 
$\xi \in L^p(\mu_0)$ and $\eta \in L^p(\mu_1)$. \label{LpTensorNorm}
\item Let
$a \in \Li(L^p(\mu_0), L^p(\nu_0))$
and let $b \in \Li(L^p(\mu_1), L^p(\nu_1))$. 
Then there is a unique map 
$a\otimes b \in \Li(L^p(\mu_0 \times \mu_1),
 L^p( \nu_0 \times \nu_1))$ 
such that $\| a\otimes b\|=\|a\|\|b\|$ and such that for every $\xi \in L^p(\mu_0)$ and $\eta \in L^p(\mu_1)$,\label{LpTensorOp}
\[
(a \otimes b)(\xi \otimes \eta)=a\xi \otimes b\eta.
\] 
\item The concrete tensor product of operators defined in 
\eqref{LpTensorOp} is associative, bilinear, 
and satisfies (when the domains are appropriate) 
$(a_1 \otimes b_1)(a_2 \otimes b_2) = a_1 a_2 \otimes b_1b_2$. \label{LpTensorOpProp}
\end{enumerate}
\end{theorem}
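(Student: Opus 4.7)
The plan is to dispose of parts (1) and (2) by routine measure-theoretic arguments, then concentrate on (3), from which (4) will follow formally. The single unifying trick for avoiding the $\sigma$-finiteness hypothesis of \cite{ncp2012AC} is that every $f \in L^p(\mu_0 \times \mu_1)$ has $\sigma$-finite support, as do any finitely many elementary tensors; thus at each step we may restrict the measures to $\sigma$-finite sub-$\sigma$-algebras carrying the finitely many objects under discussion, applying Fubini/Tonelli and the vector-valued identification $L^p(\mu_0 \times \mu_1) \cong L^p(\mu_0; L^p(\mu_1))$ only on these restrictions, in the style of \cite{figiel1984}.

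For (1), one observes that the indicator $\chi_{A \times B}$ of a rectangle with $\mu_0(A), \mu_1(B) < \infty$ corresponds to $\chi_A \otimes \chi_B$ under \eqref{LpTensor}, and such indicators span a dense subspace of $L^p(\mu_0 \times \mu_1)$ by the definition of the product measure and the density of simple functions. Part (2) is immediate from Tonelli applied to the nonnegative integrand $|\xi(\omega_0)|^p |\eta(\omega_1)|^p$.

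For (3), I would define $a \otimes b$ on the dense subspace of finite sums of elementary tensors by $(a \otimes b)\bigl(\sum_i \xi_i \otimes \eta_i\bigr) \coloneqq \sum_i a\xi_i \otimes b\eta_i$ and prove the bound $\|a \otimes b\| \leq \|a\|\|b\|$ there; continuity plus (1) will then yield a unique bounded extension to the whole space. The key estimate is obtained by factoring $a \otimes b = (a \otimes \op{id}_{L^p(\nu_1)}) \circ (\op{id}_{L^p(\mu_0)} \otimes b)$ after restricting to a $\sigma$-finite rectangle supporting the given sum, so that $\op{id} \otimes b$ acts as $b$ fiberwise in $L^p(\mu_0; L^p(\mu_1))$ (and hence has norm $\leq \|b\|$), and $a \otimes \op{id}$ has norm $\leq \|a\|$ by the symmetric argument on the other factor. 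The matching lower bound $\|a \otimes b\| \geq \|a\|\|b\|$ is obtained by testing on a pure elementary tensor $\xi \otimes \eta$ and invoking (2). Well-definedness on the algebraic tensor product (independence of the representation $\sum_i \xi_i \otimes \eta_i$) is a standard consequence of the universal property of the algebraic tensor product, once the norm bound is in place.

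Part (4) then follows formally: bilinearity, associativity, and the multiplicativity $(a_1 \otimes b_1)(a_2 \otimes b_2) = a_1 a_2 \otimes b_1 b_2$ are manifest on elementary tensors and extend to the whole spatial tensor product by density and continuity. The main technical obstacle is making the fiberwise factorization rigorous in the absence of global $\sigma$-finiteness; the resolution, as noted above, is that it suffices to establish it on finitely supported sums of elementary tensors, whose underlying supports in $\Omega_0$ and $\Omega_1$ are automatically $\sigma$-finite, so all the classical $\sigma$-finite Fubini and vector-valued $L^p$ machinery applies precisely on the slice where the computation occurs.
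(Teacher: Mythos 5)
Your proposal is correct and takes essentially the route the paper relies on: the paper does not prove Theorem \ref{SpatialTP} itself but cites \cite{ncp2012AC} (Theorem 2.16) together with the $\sigma$-finiteness reduction of \cite{figiel1984}, and your argument --- the factorization $a\otimes b=(a\otimes \op{id})\circ(\op{id}\otimes b)$ acting fiberwise under the identification $L^p(\mu_0\times\mu_1)\cong L^p(\mu_0;L^p(\mu_1))$ after restricting to the $\sigma$-finite supports of the finitely many functions in play, with the lower bound obtained by testing on an elementary tensor via part (2) --- is precisely the argument carried out in those references.
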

Part \eqref{LpTensorOp} in the previous Theorem gives a 
concrete tensor product 
between any two $L^p$-operator algebras:
\begin{definition}\label{defi_LpT_P}
Let $p \in [1, \infty)$ and let 
 $A \subseteq \Li(L^p(\mu_0))$ and $B \subseteq \Li(L^p(\mu_1))$ be $L^p$-operator algebras. 
 We define $
  A \otimes_p B$
   to be the closed linear span, in $\mathcal{B}\big(L^p(\mu_0 \times \mu_1) \big)$, of all $a \otimes b$ for $a \in A$ and $b \in B$. 
\end{definition}

Definition \ref{defi_LpT_P} provides only a concrete tensor product of $L^p$-operator algebras. 
Different representations for $A$ and $B$ on $L^p$-spaces can yield a different tensor product, 
as it was shown below Example 1.15 in \cite{ncp2013CP}. This issue appears even when $p=2$, in the nonselfadjoint case, but does not happen for C*-algebras. To fix this, in 
\cite[Definition 7.2]{ChoiGATh24}, Choi-Gardella-Thiel introduced a general theory of tensor products for $L^p$-operator algebras. Indeed, for $p \in [1, \infty)$, denote by $\op{Rep}_p(A)$ to the class of all the contractive nondegenerate representations of $A$ on $L^p$-spaces.

\begin{definition}\label{DefLPOPtp}
Let $p \in [1, \infty)$ and let  $A$ and $B$ be $L^p$-operator algebras. The \textit{$L^p$-operator spatial tensor product} $A \otimes_{\op{sp}}^p B$ is defined as the completion of $A \otimes B$
under the norm 
\[
A \otimes B \ni t \mapsto \| t \|_{\op{sp}} \coloneqq \sup\{ \| (\pi_A \otimes \pi_B)(t)\| \colon \pi_A \in \op{Rep}_p(A), \pi_B \in \op{Rep}_p(B)\}.
\] 
\end{definition}

\begin{remark}
 Notice that if $A \subseteq \Li(L^p(\mu_0))$ and $B \subseteq \Li(L^p(\mu_1))$ are nondegenerate concrete $L^p$-operator algebras (i.e. $\cj{AL^p(\mu_0)}=L^p(\mu_0)$ and $\cj{BL^p(\mu_1)}=L^p(\mu_1)$), then by construction the identity map on the algebraic tensor product $A \otimes B$ extends to a contraction $A \otimes_{\op{sp}}^p B \to A \otimes_p B$ with dense range.
 \end{remark}
 
 In this paper, we only work with tensor products of $L^p$-operator algebras where one of the factors is 
 the compact operators acting on an $L^p$-space. The setting is as follows, let $p \in [1, \infty)$, let $(\Omega, \mathfrak{M}, \mu)$ be a measure space, and let $A$ be a separable $L^p$-operator algebra with a cai. 
 The algebra $\mathcal{K}(L^p(\mu))$ has a canonical isometric nondegenerate representation on $L^p(\mu)$ 
 coming from the inclusion $\iota_{\mathcal{K}}  \colon \mathcal{K}(L^p(\mu)) \hookrightarrow \mathcal{B}(L^p(\mu))$. We fix this representation 
 and put a norm on the algebraic tensor product $\mathcal{K}(L^p(\mu)) \otimes A$ as follows:
 \[
\mathcal{K}(L^p(\mu))  \otimes A \ni t \mapsto \| t \|_{\op{lsp}} \coloneqq \sup\{ \| (\iota_{\mathcal{K}} \otimes \pi_B)(t)\| \colon \pi_A \in \op{Rep}_p(A))\}.
\] 
We call this norm the \textit{$L^p$-operator left spatial tensor product} and the completion of $\mathcal{K}(L^p(\mu)) \otimes A$ under this norm will be denoted as $\mathcal{K}(L^p(\mu))  \otimes_{\op{lsp}}^p A$. Once again, by construction we get contractions with dense range 
\[
\mathcal{K}(L^p(\mu))\otimes_{\op{sp}}^p A \to \mathcal{K}(L^p(\mu)) \otimes_{\op{lsp}}^p A \to \mathcal{K}(L^p(\mu)) \otimes_p A,
\]
that is $\| t \| \leq  \| t \|_{\op{lsp}} \leq  \| t \|_{\op{sp}}$ for any $t \in \mathcal{K}(L^p(\mu)) \otimes A$. 
\begin{definition}\label{def_stabilization}
Let $p \in [1, \infty)$, let $(\Omega, \mathfrak{M}, \mu)$ be a measure space and let $A$ be a separable $L^p$-operator algebra with a cai. We define the \textit{spatial $p$-stabilization of $A$} by 
\[
\op{St}^p(A) \coloneq \mathcal{K}(L^p(\mu))  \otimes_{\op{sp}}^p A,
\]
and the \textit{left spatial $p$-stabilization of $A$} by 
\[
 \op{St}_{\op{l}}^p(A) = \op{St}_{\op{l}, \Omega}^p(A)\coloneq \mathcal{K}(L^p(\mu))  \otimes_{\op{lsp}}^p A.
\]
\end{definition}

\begin{remark}\label{rmk_stab}
By construction, $\op{St}_{\op{l}}^p(A)$ is independent of the representation of $A$ chosen. However, it might still depend on the representation of $\mathcal{K}(L^p(\mu))$. If $p=2$ and $A$ is a C*-algebra, then $\| t \|_{\op{sp}} = \| t \|_{\op{lsp}}$ and therefore $\op{St}_{\op{l}}^2(A)$ isometrically coincides with $\op{St}^p(A)$ making it also independent of the representation chosen for $\mathcal{K}(L^2(\mu))$. We do not know whether this 
might still hold for general $L^p$-operator algebras. In order to prove a result along these lines, it would be sufficient to show that any representation $\pi_{\mathcal{K}} \colon \mathcal{K}(L^p(\Omega,\mu)) \to \mathcal{B}(L^p(\Omega_0, \mu_0))$ is, in some suitable sense, equivalent to one of the form $u \mapsto u \otimes \op{id}_{L^p(\Omega_1,\mu_1)}$ acting on $L^p(\Omega, \mu) \otimes_p L^p(\Omega_1, \mu_1)$ for some measure space $(\Omega_1, \mu_1)$. This is not clear to be true in general. On the other side, for the particular case of $L^p(\Omega,\mu)=\ell^p(I)$ for $I$ countable and $A$ having unique $L^p$-operator matrix norms, it follows from \cite[Lemma 2.4]{WaZh23} that $\op{St}_{\op{l}, I}^p(A)=\mathcal{K}(\ell^p(I))  \otimes_{p} A$, making the concrete tensor product from Definition \ref{defi_LpT_P}  independent of the representation for $A$ chosen. 
\end{remark}
In order to work with the minimal assumptions needed to get a stabilization that is independent of the representation of $A$, in Section \ref{S:P_R_T}
we will only work with $\op{St}_{\op{l}}^p(A)$, the left spatial $p$-stabilization of $A$. As discussed in Remark \ref{rmk_stab} above, all of our results will remain true if we work with a discrete countable group $G$,  an algebra $A$ with unique $L^p$-operator matrix norms, and use the concrete spatial tensor product $\mathcal{K}(\ell^p(G)) \otimes_p A$. More generally, our results will automatically hold for the spatial $p$-stabilization $\op{St}_{\op{sp}}^p(A)$ if we can show the left  stabilization is independent of the representation chosen for $\mathcal{K}(L^p(\mu))$. 

\section{Twisted Banach Crossed Products}\label{S:T_B_C_P}
 
Very much in the sense of \cite{BusSmi70}, we now define a twisted Banach $L^1$-algebra associated to a twisted action of a locally compact group $G$ on $A$. To do so, we let $\op{Aut}(A)$
be the group of isometric automorphisms of $A$ (i.e., $\varphi \in \op{Aut}(A)$ if $\varphi \colon A \to A$   is an isometric invertible algebra homomorphism) and $\op{Inv}_1(M(A))$ be the group of invertible elements  $ u \in M(A)$ with $\| u \| = \| u^{-1}\| = 1$.

\begin{definition}\label{TwistedBanachSystem}
Let $G$ be a locally compact group. A \textit{twisted action} of $G$ on $A$ consists of a 
pair $(\alpha, \sigma)$ where 
\begin{align*}
 \alpha  \colon G &\to \op{Aut}(A) & \sigma \colon G \times G & \to \op{Inv}_1(M(A))\\
  x & \mapsto \alpha_x \coloneqq\alpha(x)  & (x,y) & \mapsto \sigma_{x,y}\coloneqq \sigma(x,y)
\end{align*}
 satisfy
\begin{enumerate}
\item\label{strongly_alpha} for each $a \in A$, $x \mapsto \alpha_x(a)$ is a continuous map $G \to A$,
\item\label{strict}  for each $a \in A$, both $(x,y) \mapsto \sigma_{x,y}a$ and 
 $(x,y) \mapsto a\sigma_{x,y}$ are continuous maps $G \times G \to A$,
\item $\alpha_{1_G} = \op{id}_A$, 
\item  for all $x \in G$, $\sigma(1_G, x)=\sigma(x,1_G)=\op{id}_{M(A)}$,\label{sigma_1=1}
\item for all $x,y \in G$, $\alpha_x\alpha_y = \op{Ad}(\sigma_{x,y})\circ \alpha_{xy}$,\label{x,yAd}
\item  for all $x,y,z \in G$, $\alpha_{z}(\sigma_{x,y})\sigma_{z,xy} = \sigma_{z,x}\sigma_{zx,y}$.\label{ASzxy}
\end{enumerate} If $(\alpha, \sigma)$ is a twisted action of $G$ on $A$, we call $(G, A, \alpha, \sigma)$ a \textit{twisted Banach algebra dynamical system}.
\end{definition}
 \begin{remark} We point out that in several computations below it will be used implicitly that $\|\alpha_x(a)\|=\|a\|$ for all $x \in G$ and $a \in A$. 
\end{remark} 
\begin{remark}\label{Rmk_Strong_vs_borel}
Condition \eqref{strongly_alpha} means that $\alpha$ is strongly continuous. When equipping $M(A)$ with its strict topology, Condition \eqref{strict} above means that $\sigma$ is 
jointly strictly continuous. We warn the reader that in \cite{PacRae89, PacRae90} it is only required that both 
$\alpha$ and $\sigma$ are Borel maps. In our case, we require the stronger assumption of continuity to be able to draw some results from \cite{CPBAI_11,ncp2013CP}, and also to avoid any discussion of Polish groups associated with Banach algebras. 
\end{remark}

Let $\nu_G$ be a fixed left Haar measure for $G$ and as usual we write $dx \coloneqq  d\nu_G(x)$ for $x \in G$. Let $\Delta \colon G \to (0,\infty)$ be the modular function for $G$, that is 
$\Delta$ is a continuous group homomorphism 
satisfying, for any $\psi \in L^1(G)\coloneqq L^1(G,\nu_G)$, 
\[
\int_G \psi(xy) dx = \Delta(y^{-1}) \int_G \psi(x)dx.
\]
 Let $L^1(G; A) \coloneqq L^1(G, \nu_G; A)$ be as in Definition \ref{Lebegue-Bochner}, so that  $L^1(G; A)$ consists of all the measurable functions $f \colon G \to A$ such that 
\[
\| f \|_1 \coloneqq \int_{G} \| f(x) \| dx<\infty.
\]
Of course, $L^1(G;\C)=L^1(G)$. Given a twisted action $(\alpha, \sigma)$ of $G$ on $A$, we now let $L^1(G, A, {\alpha, \sigma})$ be $L^1(G;A)$ equipped
 with the twisted convolution $*_{\alpha,\sigma}$ as multiplication, that is
\[
(f*_{\alpha, \sigma} g)(x) \coloneqq \int_G f(y) \alpha_{y}(g(y^{-1}x))\sigma_{y,y^{-1}x} dy.
\]
\begin{remark}
If $G$ is discrete and $A=C_0(X)$ for a locally compact Hausdorff space $X$, then 
$L^1(G, A, {\alpha, \sigma})$ coincides with the algebra $F(\alpha, \sigma)$ from \cite[Definition 2.1]{Bardadyn_2024}.
\end{remark}
As usual we denote by $C_c(G,A)$ the set of functions 
$f \colon G \to A$ with compact support.  If $C_c(G,A)$ is equipped with the twisted convolution $*_{\alpha, \sigma}$, we denote the resulting algebra by $C_c(G,A,\alpha, \sigma)$, which is clearly a
dense subalgebra of $L^1(G,A,\alpha, \sigma)$. It will be convenient to also equip $C_c(G,A,\alpha, \sigma)$ with the inductive limit topology, which we precisely define below.

\begin{definition}\label{InductiveLtop}
Let $G$ be a locally compact group, let $A$ be a Banach algebra, and let $\mathcal{K}=\{K \subseteq G \colon K \text{ is compact }\}$. For each $K \in \mathcal{K}$, equip $C_K(G,A) \coloneqq \{ f \in C_c(G,A) \colon \cj{\op{supp}}(f) \subseteq K\}$ with the norm $\| f \|_K \coloneqq \sup\{\| f(x)\| \colon {x \in K}\}$. The  \textit{inductive limit topology} on $C_c(G,A)$ is the largest topology making the inclusion maps $C_K(G,A)  \hookrightarrow C_c(G,A)$ continuous for all $K \in \mathcal{K}$.
\end{definition}

When $A$ is a C*-algebra, is well known that $L^1(G,A,\alpha, \sigma)$ has a cai (see discussion before Proposition 2.7 in \cite{PacRae89} or the Appendix in \cite{PacRae90}).
This is still true for a general Banach algebra $A$:

\begin{proposition}\label{BAIL1}
Let $(G, A, \alpha, \sigma)$ be a twisted Banach algebra dynamical system.
Then $C_c(G,A,\alpha, \sigma)$ has a cai with respect to the inductive limit topology. 
Moreover, such cai is also a cai for $L^1(G,A,\alpha, \sigma)$.
\end{proposition}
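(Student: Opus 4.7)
The plan is to build a cai of the form $\phi_V \otimes e_\lambda$. Fix a cai $(e_\lambda)_{\lambda \in \Lambda}$ of $A$, a basis $\mathcal{V}$ of precompact symmetric neighborhoods of $1_G$, and for each $V \in \mathcal{V}$ a nonnegative $\phi_V \in C_c(G)$ with $\op{supp}(\phi_V) \subseteq V$ and $\int_G \phi_V(x)\,dx = 1$. Set $f_{\lambda, V}(x) \coloneqq \phi_V(x)\,e_\lambda$; then $f_{\lambda, V} \in C_{\overline{V}}(G, A)$ and $\|f_{\lambda, V}\|_1 \leq 1$. In view of Definition \ref{InductiveLtop} and the trivial estimate $\|h\|_1 \leq \nu_G(K')\|h\|_\infty$ for $h \in C_{K'}(G, A)$, the whole result reduces to the following approximate-identity property: for every $g \in C_c(G, A)$ with $\op{supp}(g) \subseteq K$ and every $\varepsilon > 0$, there exist $\lambda_0$ and $V_0$ (with $V_0$ allowed to depend on $\lambda_0$) such that for all $\lambda \geq \lambda_0$ and $V \subseteq V_0$,
\[
\|f_{\lambda, V} *_{\alpha, \sigma} g - g\|_\infty + \|g *_{\alpha, \sigma} f_{\lambda, V} - g\|_\infty < \varepsilon,
\]
with both convolutions supported in the compact set $\overline{V_0}\,K \cup K\,\overline{V_0}$. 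This property then upgrades to a cai via the standard re-indexing by finite subsets of $C_c(G, A)$ and tolerances $\varepsilon > 0$.

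For the left convolution, using $\int_G \phi_V\,dy = 1$,
\[
(f_{\lambda, V} *_{\alpha, \sigma} g)(x) - g(x) = \int_G \phi_V(y)\bigl\{e_\lambda\bigl[\alpha_y(g(y^{-1}x))\sigma_{y, y^{-1}x} - g(x)\bigr] + [e_\lambda g(x) - g(x)]\bigr\}\,dy.
\]
The first inner bracket is controlled by the uniform continuity of $(y, x) \mapsto \alpha_y(g(y^{-1}x))\sigma_{y, y^{-1}x}$ on the compact $\overline{V_0} \times K$; joint continuity holds by Definition \ref{TwistedBanachSystem}(\ref{strongly_alpha}), (\ref{strict}), together with the isometric bounds $\|\alpha_y\| = \|\sigma_{y, z}\| = 1$, and the value at $y = 1_G$ equals $g(x)$ by Definition \ref{TwistedBanachSystem}(\ref{sigma_1=1}), so shrinking $V$ makes this summand uniformly small in $x$. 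The second inner bracket is handled via a standard $\varepsilon/3$-argument applied to the compact set $g(K) \subseteq A$ and the cai property of $(e_\lambda)$.

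For the right convolution, substituting $t = y^{-1}x$ with Jacobian $\Delta(t^{-1})$ (continuous with value $1$ at $t = 1_G$, via inversion and the modular formula) gives
\[
(g *_{\alpha, \sigma} f_{\lambda, V})(x) = \int_G \phi_V(t)\, g(xt^{-1})\,\alpha_{xt^{-1}}(e_\lambda)\,\sigma_{xt^{-1}, t}\,\Delta(t^{-1})\,dt.
\]
Write the difference with $g(x)$ as $\mathrm{I}(x) + \mathrm{II}(x)$ with $\mathrm{II}(x) \coloneqq g(x)\alpha_x(e_\lambda) - g(x)$ and $\mathrm{I}$ the discrepancy between the integral and $g(x)\alpha_x(e_\lambda)$. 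The identity $g(x)\alpha_x(e_\lambda) = \alpha_x\bigl(\alpha_{x^{-1}}(g(x))\,e_\lambda\bigr)$ combined with the cai property applied to the compact set $\{\alpha_{x^{-1}}(g(x)) : x \in K\} \subseteq A$ forces $\|\mathrm{II}\|_\infty \to 0$ as $\lambda$ grows; for each fixed $\lambda$, joint continuity of the integrand on the compact $K \times \overline{V_0}$ forces $\|\mathrm{I}\|_\infty \to 0$ as $V$ shrinks.

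The main technical obstacle is precisely the order of quantifiers in the right-convolution estimate: $V_0$ must be chosen after $\lambda$ is fixed, so the candidate net $(f_{\lambda, V})_{(\lambda, V) \in \Lambda \times \mathcal{V}}$ in its natural product order need not converge. The re-indexing over $(g,\varepsilon)$ mentioned above bypasses this by fixing $\lambda$ first (to handle $\mathrm{II}$) and then choosing $V$ small (to handle $\mathrm{I}$), and the resulting net is automatically a cai also for $L^1(G, A, \alpha, \sigma)$ by the support/norm domination noted above.
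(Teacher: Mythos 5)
Your argument is correct in substance but takes a genuinely different route from the paper. The paper does not estimate the convolutions directly: it realizes $L^1(G,A,\alpha,\sigma)$ as the cross-sectional algebra of the semidirect-product Banach bundle $G\ltimes_{\alpha,\sigma}A$, checks that $\big((1_G,e_\lambda)\big)_\lambda$ is a strong approximate unit of that bundle (which reduces to exactly your two fiberwise limits $e_\lambda a\to a$ and $a\alpha_x(e_\lambda)\to a$ uniformly on compacta, and is twist-independent), and then cites Fell--Doran (VIII-Theorem 5.11 and Remark 5.12) for the passage to a cai of $C_c$ in the inductive limit topology and of $L^1$. Your proof replaces that citation with the hands-on convolution estimates, which is where the twist actually enters: the left estimate uses the joint strict continuity of $\sigma$ together with $\sigma_{1_G,x}=\sigma_{x,1_G}=1$, and the right estimate uses the modular substitution. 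What your approach buys is self-containedness and, notably, an explicit confrontation with the order-of-quantifiers issue ($V$ chosen after $\lambda$ in the term $\mathrm{I}$), which the paper never surfaces because it is absorbed into the Fell--Doran machinery; your re-indexing over pairs $(F,\varepsilon)$ is the standard and correct fix, and your final upgrade to $L^1$ via $\|h\|_1\le\nu_G(K')\|h\|_\infty$ and submultiplicativity of $\|\cdot\|_1$ is fine. What the paper's route buys is brevity and the reusable explicit family $f_{\lambda,U}(x)=\psi_U(x)e_\lambda$ recorded in Remark \ref{ExplicitCAI}. One small slip to fix: since $\alpha$ is only a twisted action, $\alpha_{x^{-1}}\ne\alpha_x^{-1}$ in general (indeed $\alpha_x\alpha_{x^{-1}}=\op{Ad}(\sigma_{x,x^{-1}})$), so your identity should read $g(x)\alpha_x(e_\lambda)=\alpha_x\big(\alpha_x^{-1}(g(x))e_\lambda\big)$ with the inverse automorphism; the set $\{\alpha_x^{-1}(g(x)):x\in K\}$ is still compact because $x\mapsto\alpha_x^{-1}(a)$ is continuous (for $x_n\to x$, $\|\alpha_{x_n}^{-1}(a)-\alpha_x^{-1}(a)\|=\|a-\alpha_{x_n}(\alpha_x^{-1}(a))\|\to 0$ by strong continuity and isometry), so the $\mathrm{II}$-estimate goes through unchanged.
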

\begin{proof}
Let $G \ltimes_{\alpha, \sigma} A$ be defined as $G \times A$ equipped with the product topology together with a multiplication $\cdot_{\alpha, \sigma}$ given by 
\[
(x,a) \cdot_{\alpha, \sigma} (y,b) \coloneqq (xy,a\alpha_x(b)\sigma_{x,y}),
\]
for all $(x,a), (y,b) \in G \times A$. Then  $\mathcal{A} \coloneqq(G \ltimes_{\alpha, \sigma} A, \pi )$ is a Banach bundle over $G $ where  $\pi \colon G \ltimes_{\alpha, \sigma} A \to G$  
is given by $\pi(x,a)\coloneqq x$ and each fiber is equipped with the constant $A$-norm 
\[
\|(x,a)\|_{\pi^{-1}(x)} \coloneqq \|a\|.
\]
Let $(e_\lambda)_{\lambda \in \Lambda}$ be a cai for $A$. We claim that $\big( (1_G, e_\lambda) \big)_{\lambda \in \Lambda}$ is a strong cai for $\mathcal{A}$ in the sense of \cite[VIII-Definition 2.11]{FellDor126}. 
That is, $\| (1_G,e_\lambda) \|_{\pi^{-1}(1_G)}\leq 1$ and for every compact subset $Q \subseteq G \ltimes_{\alpha, \sigma} A$ and any 
$\varepsilon>0$ there is $\lambda_0 \in \Lambda$ such that 
\[
\| (x,a)\cdot_{\alpha, \sigma}(1_G,e_\lambda)-(x,a)\|_{\pi^{-1}(x)}<\varepsilon,
\]
and 
\[
\| (1_G,e_\lambda)\cdot_{\alpha, \sigma}(x,a)-(x,a)\|_{\pi^{-1}(x)}<\varepsilon 
\]
for all $(x,a) \in Q$ and all $\lambda \geq \lambda_0$. 
Observe that 
\[
(x,a)\cdot_{\alpha, \sigma}(1_G,e_\lambda)-(x,a)= (x,e_\lambda a - a),
\]
and 
\[
 (1_G,e_\lambda)\cdot_{\alpha, \sigma}(x,a)-(x,a)= (x,a \alpha_x(e_\lambda)-a).
\]
Hence, the claim follows at once from the fact that $(e_\lambda)_{\lambda \in \Lambda}$ is a (strong) cai for $A$ and two standard compactness arguments. In fact, the details are exactly as the ones one will need to carry in the untwisted case (see \cite[VIII-4.3]{FellDor126} ), for the two displayed 
multiplications above are independent of the twist chosen.

Next, let $L^1(G \mid \mathcal{A})$ be the cross-sectional algebra of $\mathcal{A}$ as defined in Chapter VIII Section 5 of \cite{FellDor126}. 
Since $\mathcal{A}$ has a strong cai, we apply \cite[VIII-Theorem 5.11]{FellDor126} and get 
a (strong) cai for $L^1(G \mid \mathcal{A})$ that belongs to 
$C_c(G \mid \mathcal{A})$, the space of continuous sections $G \to G \ltimes_{\alpha, \sigma} A$ with compact support. 
Furthermore, by \cite[VIII-Remark 5.12]{FellDor126}, the net obtained is also a cai for $C_c(G \mid \mathcal{A})$ when equipped with the 
 inductive limit topology (see \cite[II-14.3]{FellDor125}).
 Finally, observe that the map  $L^1(G, A, \alpha, \sigma) \to L^1(G \mid \mathcal{A})$, 
 $f \mapsto \widetilde{f}$, where $\widetilde{f} \colon G \to G \ltimes_{\alpha, \sigma} A$ is given by 
 \[
 \widetilde{f}(x) \coloneqq (x,f(x)),
 \]
  is an isometric algebra isomorphism that identifies $C_c(G,A,\alpha, \sigma)$ with $C_c(G \mid \mathcal{A})$, finishing the proof. 
 \end{proof}
 
\begin{remark}\label{ExplicitCAI}
For convenience and future use, we explicitly define the cai of $L^1(G \mid \mathcal{A})$ obtained in the previous proof in the setting of $L^1(G,A,\alpha, \sigma)$. Let $(e_\lambda)_{\lambda \in \Lambda}$ 
 be the cai for $A$ and let $\mathcal{U} \subseteq \mathcal{P}(G)$ be a neighborhood base of $1_G$. For each $U \in \mathcal{U}$ fix a function $\psi_U \in C_c(G)$ such that $\cj{\op{supp}}( \psi_U )\subseteq U$, 
 $\psi_U \geq 0$, $\psi_U(x^{-1})=\psi_U(x)$ for all $x \in G$, and $\int _G \psi_U(x)dx=1$. 
 For each $\lambda \in \Lambda$, $U \in \mathcal{U}$ define 
  $f_{\lambda, U} \colon G \to A$ by 
 \[
 f_{\lambda, U}(x) \coloneqq \psi_U(x) e_\lambda.
 \]
 Then $(f_{\lambda, U})_{\lambda \in \Lambda, U \in \mathcal{U}}$ 
 is the desired cai in $C_c(G,A,\alpha, \sigma)$. 
 \end{remark}
For a Banach space $E$, recall that $\op{Iso}(E) \subseteq \Li(E)$ denotes the group of invertible 
isometries on $E$. We now turn to covariant representations of twisted Banach algebra dynamical systems. 

\begin{definition}\label{CovariantRep}
A \textit{covariant representation} of a twisted Banach algebra dynamical system $(G, A, \alpha, \sigma)$
on a Banach space $E$ consists of a pair $(\pi, u)$ where 
\begin{enumerate}
\item $\pi \colon A \to \Li(E)$ is a nondegenerate representation of $A$ on $E$, \label{CovC1}
\item $u \colon G \to \op{Iso}(E)$ is a strongly continuous map, \label{CovC2}
\item $u_xu_y = \widehat{\pi}(\sigma_{x,y})u_{xy}$ for all $x,y \in G$, (see Equation \eqref{Eq:hat_pi})\label{CovC3}
\item $\pi(\alpha_x(a))=u_x\pi(a)u_x^{-1}$ for all $x \in G$, $a \in A$. \label{CovC4}
\end{enumerate}
\end{definition}

Once again as in Remark \ref{Rmk_Strong_vs_borel}, and to contrast with \cite[Definition 2.3]{PacRae89}, we opt to follow \cite{CPBAI_11,ncp2013CP} and impose the stronger assumptions 
of continuity for $u$ rather than simply asking for it to be Borel. 

\begin{definition}\label{IntegratedRepn}
For each covariant representation $(\pi, u)$ of a twisted Banach algebra dynamical system  $(G, A, \alpha, \sigma)$ on a Banach space $E$, its  \textit{integrated representation} $\pi \rtimes u \colon L^1(G, A, \alpha, \sigma) \to \Li(E)$ is defined, for each $f \in L^1(G, A, \alpha, \sigma)$, by
\[
(\pi \rtimes u ) (f) \coloneqq \int_G \pi(f(x))u_x dx. 
\]
\end{definition}

A more general version of Lemma 2.8 in \cite{Bardadyn_2024}
also holds in the twisted case:
\begin{corollary}\label{1-1corresp}
Covariant representations of a twisted Banach algebra dynamical system
$(G, A, \alpha, \sigma)$ are in one-to-one correspondence with nondegenerate representations of $L^1(G, A, {\alpha, \sigma})$ on Banach spaces via the map $(\pi,u) \mapsto \pi \rtimes u$.
\end{corollary}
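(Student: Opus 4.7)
The plan is to establish the bijection by exhibiting a pair of mutually inverse constructions. For the forward direction, I would check that $\pi \rtimes u$ is a well-defined nondegenerate representation of $L^1(G,A,\alpha,\sigma)$. Boundedness is immediate from the Bochner estimate $\|(\pi \rtimes u)(f)\| \leq \|\pi\|\,\|f\|_1$. For multiplicativity, I would expand
\[
(\pi \rtimes u)(f)(\pi \rtimes u)(g) = \int_G\int_G \pi(f(x))\, u_x \pi(g(y))\, u_y\, dy\, dx,
\]
apply the covariance identity $u_x \pi(g(y)) = \pi(\alpha_x(g(y)))\, u_x$ together with $u_x u_y = \widehat{\pi}(\sigma_{x,y})\, u_{xy}$, and perform the substitution $z = xy$ to recover $(\pi \rtimes u)(f *_{\alpha,\sigma} g)$. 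Nondegeneracy follows by evaluating on the explicit cai $(f_{\lambda,U})$ of Remark \ref{ExplicitCAI}: joint continuity of $(x,\xi) \mapsto u_x \xi$ and nondegeneracy of $\pi$ yield $(\pi \rtimes u)(f_{\lambda,U})\xi \to \xi$ for every $\xi \in E$.

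For the reverse direction, given a nondegenerate $\rho \colon L^1(G,A,\alpha,\sigma) \to \Li(E)$, I would use Proposition \ref{BAIL1} and the extension construction of Equation \eqref{Eq:hat_pi} to obtain a unital representation $\widehat{\rho}\colon M(L^1(G,A,\alpha,\sigma)) \to \Li(E)$. The central step is to produce, for each $a \in A$ and each $x \in G$, canonical multipliers $m_a, \lambda_x \in M(L^1(G,A,\alpha,\sigma))$ representing formal left convolution by $a\delta_{1_G}$ and by $\delta_x$. Explicitly one can take
\[
(m_a \cdot f)(y) = a f(y), \qquad (f \cdot m_a)(y) = f(y)\alpha_y(a),
\]
and analogous twisted translation formulas (involving $\sigma$ and the modular function $\Delta$) for $\lambda_x$. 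One then sets $\pi(a) \coloneqq \widehat{\rho}(m_a)$ and $u_x \coloneqq \widehat{\rho}(\lambda_x)$, and checks that the covariance relations in Definition \ref{CovariantRep} reduce, under $\widehat{\rho}$, to the multiplier identities $\lambda_x m_a \lambda_x^{-1} = m_{\alpha_x(a)}$ and $\lambda_x \lambda_y = \sigma_{x,y}\lambda_{xy}$. These identities hold in $M(L^1(G,A,\alpha,\sigma))$ as formal consequences of the twist axioms, and strong continuity of $x \mapsto u_x$ follows from strong continuity of twisted translation on $L^1(G,A,\alpha,\sigma)$. That the two assignments are mutually inverse is then verified by computing $\pi \rtimes u$ against the cai $(f_{\lambda,U})$ and comparing with $\rho$ on the dense subalgebra $C_c(G,A,\alpha,\sigma)$.

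The main obstacle is verifying that $\lambda_x$ is a genuine double centralizer in the twisted setting: the compatibility between its left and right actions relies on the cocycle condition \eqref{ASzxy}, and tracking the modular function together with $\sigma$ in the right action requires some care. Once this is in place, invertibility of $\lambda_x$ (with $\lambda_x^{-1} = \sigma_{x,x^{-1}}^{-1}\lambda_{x^{-1}}$, coming from $\lambda_x\lambda_{x^{-1}} = \sigma_{x,x^{-1}}\lambda_{1_G}$) gives that $u_x$ is an invertible isometry, and the remaining covariance and continuity properties follow from the corresponding statements in $M(L^1(G,A,\alpha,\sigma))$.
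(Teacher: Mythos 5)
Your proposal is correct and is essentially the paper's own route: the paper simply observes that $L^1(G,A,\alpha,\sigma)$ has a cai (Proposition \ref{BAIL1}) and defers to parts (1) and (2) of Theorem 3.3 of Busby--Smith with ``unitary'' replaced by ``invertible isometry'', which is exactly the two-way integration/multiplier argument you spell out --- indeed your $m_a$ and $\lambda_x$ are precisely the maps $\lambda_A(a),\rho_A(a)$ and $\lambda_G(x),\rho_G(x)$ the paper introduces later in Definitions \ref{AtoM(L^1)} and \ref{GtoM(L^1)}. One small correction: from $\lambda_x\lambda_{x^{-1}}=\widehat{\pi}(\sigma_{x,x^{-1}})\lambda_{1_G}$ the inverse is $\lambda_x^{-1}=\lambda_{x^{-1}}\sigma_{x,x^{-1}}^{-1}$ (equivalently $\sigma_{x^{-1},x}^{-1}\lambda_{x^{-1}}$, cf.\ Definition \ref{L_G-inverse}), not $\sigma_{x,x^{-1}}^{-1}\lambda_{x^{-1}}$, since the cocycle multiplier need not commute past the translation.
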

\begin{proof}
$L^1(G, A, \alpha, \sigma)$ has a cai thanks to Proposition \ref{BAIL1}. The rest of the proof is carried out exactly as parts (1) and (2) of Theorem 3.3 in \cite{BusSmi70}, 
by simply replacing every instance of `unitary' by  `invertible isometry'.  
\end{proof}

We are now ready to define twisted Banach crossed products, much in the spirit of \cite[Definitions 3.1 and 3.2]{CPBAI_11}. 

\begin{definition}\label{UnifBound}
Let $(G, A, \alpha, \sigma)$ be a twisted Banach algebra dynamical system. 
We say a class $\mathcal{R}$ of covariant representations of $(G, A, \alpha, \sigma)$
is \textit{uniformly bounded} if 
\begin{equation}\label{UnifBoundPI}
C_\mathcal{R} \coloneqq \sup \{ \|\pi\| \colon (\pi, u) \in \mathcal{R}\}< \infty.
\end{equation}
\end{definition}

\begin{definition}\label{def:R-tw-crossed-pr}
Let $(G, A, \alpha, \sigma)$ be a twisted Banach algebra dynamical system and let 
$\mathcal{R}$ be a nonempty uniformly bounded class of covariant representations of $(G, A, \alpha, \sigma)$. 
The \textit{twisted Banach crossed product of  $(G, A, \alpha, \sigma)$
associated with $\mathcal{R}$}, denoted 
 by $F_{\mathcal{R}}(G, A, \alpha, \sigma)$,
is defined as the Hausdorff completion of $L^1(G, A, \alpha, \sigma)$
with respect to the seminorm 
\[
\| f \|_{\mathcal{R}} \coloneqq \sup\{ \| (\pi \rtimes u ) (f) \| \colon (\pi, u) \in \mathcal{R} \}.
\]
More concretely,  $F_{\mathcal{R}}(G, A, \alpha, \sigma)$ is the completion 
of $L^1(G, A, \alpha, \sigma) / \ker(\| - \|_{\mathcal{R}} )$ under the norm 
induced by $\| -\|_{\mathcal{R}}$.
\end{definition}

\begin{remark}\label{Facts:F_R}
The following list consists of facts and notational conventions regarding $F_{\mathcal{R}}(G, A, \alpha, \sigma)$. All the facts are 
standard to prove and will be often used in this paper. In what follows 
$f \in  L^1(G, A, \alpha, \sigma)$:
\begin{enumerate}
\item The induced norm on $L^1(G, A, \alpha, \sigma) / \ker(\| - \|_{\mathcal{R}} )$ is still denoted by $\|-\|_\mathcal{R}$, so if $q_{\mathcal{R}} \colon L^1(G, A, \alpha, \sigma) \to L^1(G, A, \alpha, \sigma) / \ker(\| - \|_{\mathcal{R}} )$ is the quotient map, we then have $\| q_\mathcal{R}(f) \|_{\mathcal{R} }= \| f \|_{\mathcal{R}}$.
\item The norm and multiplication on the Banach algebra $F_{\mathcal{R}}(G, A, \alpha, \sigma)$ are still denoted by $\|-\|_\mathcal{R}$ and $*_{\alpha, \sigma}$ respectively. 
\item There is an isometric map $j_{\mathcal{R}} \colon L^1(G, A, \alpha, \sigma) / \ker(\| - \|_{\mathcal{R}} ) \to F_{\mathcal{R}}(G, A, \alpha, \sigma)$ that has dense range, and therefore the composition 
\begin{equation}\label{Tau_R}
\tau_{\mathcal{R} }\coloneqq j_\mathcal{R} \circ q_\mathcal{R} \colon L^1(G, A, \alpha, \sigma) \to F_{\mathcal{R}}(G, A, \alpha, \sigma)
\end{equation}
is a map with dense range satisfying $\| \tau_\mathcal{R}(f)\|_{\mathcal{R} }= \| f \|_{\mathcal{R}}$. 
\item \label{ILT->F} The space $\tau_\mathcal{R}(C_c(G,A, \alpha, \sigma))$ is a dense subalgebra of $F_{\mathcal{R}}(G, A, \alpha, \sigma)$. Moreover, if $(g_\lambda)_{\lambda \in \Lambda}$ is a net in $C_c(G,A, \alpha, \sigma)$ that converges in the inductive limit topology to $g \in C_c(G,A, \alpha, \sigma)$, then $\tau_\mathcal{R}(g_\lambda)$ converges to $\tau_{\mathcal{R}}(g)$ in $F_{\mathcal{R}}(G, A, \alpha, \sigma)$. 
\item If $(\pi, u) \in \mathcal{R}$ is a covariant representation of $(G,A,\alpha, \sigma)$ 
on a Banach space $E$, the map $\pi \rtimes u$ satisfies $\| (\pi \rtimes u)(f)\| \leq \| f \|_{\mathcal{R}}$ and therefore gives rise to a contractive representation $(\pi \rtimes u)^{\mathcal{R}} \colon F_{\mathcal{R}}(G, A, \alpha, \sigma) \to \Li(E)$ that satisfies 
\[
(\pi \rtimes u)^{\mathcal{R}}(\tau_\mathcal{R}(f)) = (\pi \rtimes u)(f).
\]
Furthermore, for any $c \in F_{\mathcal{R}}(G, A, \alpha, \sigma)$, we still have
\[
\| c \|_{\mathcal{R}} = \sup\{ \| (\pi \rtimes u )^\mathcal{R} (c) \| \colon (\pi, u) \in \mathcal{R} \}.
\]
\end{enumerate}

\end{remark}

\begin{remark}
If the twist $\sigma$ is trivial (i.e., $\sigma=\mathbf{1}$ where $\mathbf{1}_{x,y}=1_A \in M(A)$ for all $x,y \in G$) then we get $F_\mathcal{R}(G,A,\alpha, \mathbf{1}) = (A \rtimes_\alpha G)^\mathcal{R}$ from \cite[Definition 3.2]{CPBAI_11}. 
\end{remark}

\begin{theorem}\label{F_R:BAI}
Let $(G, A, \alpha, \sigma)$ be a twisted Banach algebra dynamical system and let 
$\mathcal{R}$ be a nonempty uniformly bounded class of covariant representations of $(G, A, \alpha, \sigma)$. Then $F_{\mathcal{R}}(G, A, \alpha, \sigma)$ has a $C_\mathcal{R}$-bounded approximate identity. 
\end{theorem}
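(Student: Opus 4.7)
The approach is to push the explicit cai $(f_{\lambda,U})$ of $L^1(G,A,\alpha,\sigma)$ constructed in Remark~\ref{ExplicitCAI} through the map $\tau_\mathcal{R}$ and show that the resulting net $\bigl(\tau_\mathcal{R}(f_{\lambda,U})\bigr)_{\lambda \in \Lambda, U \in \mathcal{U}}$ is a $C_\mathcal{R}$-bounded approximate identity for $F_\mathcal{R}(G,A,\alpha,\sigma)$. The uniform bound is a one-line computation: for any $(\pi,u) \in \mathcal{R}$,
\[
\|(\pi \rtimes u)(f_{\lambda,U})\| = \bigg\| \pi(e_\lambda) \int_G \psi_U(x) u_x \, dx \bigg\| \leq \|\pi\|\,\|e_\lambda\| \int_G \psi_U(x)\,dx \leq C_\mathcal{R},
\]
using that each $u_x$ is an isometry, $\psi_U$ is a probability density, and $\|e_\lambda\| \leq 1$. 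Taking the supremum over $\mathcal{R}$ and invoking Remark~\ref{Facts:F_R} yields $\|\tau_\mathcal{R}(f_{\lambda,U})\|_\mathcal{R} \leq C_\mathcal{R}$ for every $\lambda$ and $U$.

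For the approximation property, I would first verify it on the dense subalgebra $\tau_\mathcal{R}(C_c(G,A,\alpha,\sigma))$ of $F_\mathcal{R}(G,A,\alpha,\sigma)$. Fix $g \in C_c(G,A,\alpha,\sigma)$. Proposition~\ref{BAIL1} gives $f_{\lambda,U} *_{\alpha,\sigma} g \to g$ and $g *_{\alpha,\sigma} f_{\lambda,U} \to g$ in the inductive limit topology, and Remark~\ref{Facts:F_R}\eqref{ILT->F} transfers these convergences to norm convergence in $F_\mathcal{R}(G,A,\alpha,\sigma)$. Since $\tau_\mathcal{R}$ is an algebra homomorphism for $*_{\alpha,\sigma}$, this reads as
\[
\tau_\mathcal{R}(f_{\lambda,U}) *_{\alpha,\sigma} \tau_\mathcal{R}(g) \longrightarrow \tau_\mathcal{R}(g) \tand \tau_\mathcal{R}(g) *_{\alpha,\sigma} \tau_\mathcal{R}(f_{\lambda,U}) \longrightarrow \tau_\mathcal{R}(g).
\]

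Finally, to promote the approximation to arbitrary $c \in F_\mathcal{R}(G,A,\alpha,\sigma)$ I would run a standard three-term estimate exploiting the uniform bound: given $\varepsilon > 0$, pick $g \in C_c(G,A,\alpha,\sigma)$ with $\|\tau_\mathcal{R}(g) - c\|_\mathcal{R} < \varepsilon/(2(1+C_\mathcal{R}))$, split
\[
\tau_\mathcal{R}(f_{\lambda,U}) *_{\alpha,\sigma} c - c = \tau_\mathcal{R}(f_{\lambda,U}) *_{\alpha,\sigma} \bigl( c - \tau_\mathcal{R}(g) \bigr) + \bigl( \tau_\mathcal{R}(f_{\lambda,U}) *_{\alpha,\sigma} \tau_\mathcal{R}(g) - \tau_\mathcal{R}(g) \bigr) + \bigl( \tau_\mathcal{R}(g) - c \bigr),
\]
and choose $\lambda, U$ large enough so that the middle summand has norm less than $\varepsilon/2$; the analogous right-sided estimate finishes the argument. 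I do not anticipate a genuine obstacle: the whole argument amounts to transporting the cai of $L^1(G,A,\alpha,\sigma)$ through the bounded homomorphism $\tau_\mathcal{R}$, with the constant $C_\mathcal{R}$ emerging precisely because representations in $\mathcal{R}$ have norm at most $C_\mathcal{R}$. The only mild subtlety is that approximation on the dense subalgebra must be extracted from the inductive-limit convergence rather than from $L^1$-convergence, which is exactly what Remark~\ref{Facts:F_R}\eqref{ILT->F} is designed to handle.
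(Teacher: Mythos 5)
Your proposal is correct and follows essentially the same route as the paper: the same norm estimate $\|(\pi\rtimes u)(f_{\lambda,U})\|\leq C_\mathcal{R}$, followed by transporting the inductive-limit cai of $C_c(G,A,\alpha,\sigma)$ from Proposition \ref{BAIL1} through $\tau_\mathcal{R}$ via Remark \ref{Facts:F_R}\eqref{ILT->F}. The only difference is that you spell out the final three-term density argument, which the paper leaves implicit in the phrase ``$C_\mathcal{R}$-approximate identity.''
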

\begin{proof}
Let $(f_{\lambda, U})_{\lambda \in \Lambda, U \in \mathcal{U}}$ be the cai in $L^1(G,A,\alpha, \sigma)$ as written in Remark \ref{ExplicitCAI}. Then, 
for any $(\pi, u) \in \mathcal{R}$, we have
\[
\|(\pi \rtimes u) (f_{\lambda,U})   \| \leq \| \pi \| \int_G  \psi_U(x)  dx \leq C_\mathcal{R}.
\]
Hence, if $\tau_{\mathcal{R}}$ is the map defined in Equation \eqref{Tau_R}, we 
get 
\[
\| \tau_{\mathcal{R}}(f_{\lambda, U}) \|_{\mathcal{R}} = \| f_{\lambda, U} \|_{\mathcal{R}}  \leq C_\mathcal{R}.
\]
Finally, by Proposition \ref{BAIL1}, $(f_{\lambda, U})_{\lambda \in \Lambda, U \in \mathcal{U}}$ is a cai for $C_c(G,A,\alpha, \sigma)$ equipped with the inductive limit topology, 
so Part \eqref{ILT->F} in Remark \ref{Facts:F_R} implies that the net $(\tau_{\mathcal{R}}(f_{\lambda, U}))_{\lambda \in \Lambda, U \in \mathcal{U}}$ is a $C_\mathcal{R}$-approximate identity for $F_{\mathcal{R}}(G, A, \alpha, \sigma)$, as wanted. 
\end{proof}

\section{Universal Properties}\label{S:U_P}

In what follows we let $\mathcal{R}$ be a fixed nonempty and uniformly bounded class of covariant representations 
of a twisted Banach algebra dynamical system $(G, A, \alpha, \sigma)$. Our main goal in this section is to define a ``universal'' covariant representation of $(G,A, \alpha, \sigma)$ on the Banach space $F_{\mathcal{R}}(G,A, \alpha, \sigma)$. In order to do so, we need the extra assumption that $\mathcal{R}$ consists only of contractive representations. This is only needed to faithfully represent $M(F_{\mathcal{R}}(G,A, \alpha, \sigma))$ on $F_{\mathcal{R}}(G,A, \alpha, \sigma)$, so we will introduce the extra assumption of $C_\mathcal{R} \leq 1$ right before is needed. 

We start by defining
a canonical map  $A  \to M(F_{\mathcal{R}}(G,A, \alpha, \sigma))$.
This will be done in several steps. 

\begin{definition}\label{AtoM(L^1)}
For each $a \in A$, define $\lambda_A(a) \colon L^1(G,A, \alpha, \sigma) \to L^1(G,A, \alpha, \sigma)$, 
for each $f \in L^1(G,A,\alpha,\sigma)$ and $x\in G$, 
by
\[
(\lambda_A(a)f)(x) \coloneqq af(x).
\]
Next, for $a \in A$ we define $\rho_A(a) \colon L^1(G,A, \alpha, \sigma) \to L^1(G,A, \alpha, \sigma)$,
for any $f \in L^1(G,A,\alpha,\sigma)$ and $x\in G$, 
as
\[
 (\rho_A(a)f)(x) \coloneqq f(x)\alpha_x(a).
\]
\end{definition}

\begin{lemma}\label{propAtoM(L^1)}
Let $f, g \in  L^1(G,A,\alpha,\sigma)$, let $a \in A$, let $(\pi, u) \in \mathcal{R}$, and let $C_{\mathcal{R}}$ be as in 
Equation \eqref{UnifBoundPI}. Then 
\begin{enumerate}
\item $\lambda_A(a)(f*_{\alpha, \sigma} g ) = (\lambda_A(a)f) *_{\alpha, \sigma} g$, \label{ALeftC1}
\item $(\pi \rtimes u)(\lambda_A(a)f) = \pi(a) (\pi \rtimes u)(f)$, \label{ALeftC2}
\item $\| \lambda_A(a) f\|_\mathcal{R} \leq C_{\mathcal{R}} \|a\|\| f \|_{\mathcal{R}}$, \label{ALeftC3}
\item $\rho_A(a)(f*_{\alpha, \sigma} g ) = f*_{\alpha, \sigma} (\rho_A(a)g)$,\label{ARightC1}
\item $(\pi \rtimes u)(\rho_A(a)f)  =  (\pi \rtimes u)(f)\pi(a)$, \label{ARightC2}
\item $\| \rho_A(a) f\|_\mathcal{R} \leq C_{\mathcal{R}} \|a\|\| f \|_{\mathcal{R}}$, \label{ARightC3}
\item $f*_{\alpha, \sigma} (\lambda_A(a)g) =  (\rho_A(a)f) *_{\alpha, \sigma} g$. \label{A_MULT}
\end{enumerate}
\end{lemma}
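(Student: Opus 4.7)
The plan is to verify all seven identities by direct manipulation of the convolution formula and the defining relations of a twisted covariant system, in essentially two groups: the three left-multiplication statements (1)--(3), the three right-multiplication statements (4)--(6), and finally the ``sliding'' identity (7).

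For parts (1) and (2), I will simply unfold the definitions. Part (1) follows because $a$ pulls outside of the convolution integrand: $(\lambda_A(a)(f*_{\alpha,\sigma}g))(x)=a\int_G f(y)\alpha_y(g(y^{-1}x))\sigma_{y,y^{-1}x}\,dy$, and $a$ may be absorbed into $f(y)$. Part (2) uses $\pi$'s multiplicativity to pull $\pi(a)$ in front of the integral defining $(\pi\rtimes u)(f)$. Part (3) is then immediate: $\|(\pi\rtimes u)(\lambda_A(a)f)\|\le \|\pi(a)\|\|(\pi\rtimes u)(f)\|\le C_\mathcal{R}\|a\|\|f\|_\mathcal{R}$, and taking the sup over $\mathcal{R}$ gives the bound. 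The right-multiplication analogues (5) and (6) follow the same pattern once (4) is in hand: for (5) one uses the covariance relation $\pi(\alpha_x(a))=u_x\pi(a)u_x^{-1}$ from Definition \ref{CovariantRep}\eqref{CovC4} so that $\pi(\alpha_x(a))u_x=u_x\pi(a)$, and then $\pi(a)$ factors out on the right of the integral.

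The main obstacle, such as it is, will be part (4), which is the only step that actually uses the cocycle/twist structure nontrivially. I need to compare
\[
(f*_{\alpha,\sigma}g)(x)\alpha_x(a)=\int_G f(y)\alpha_y(g(y^{-1}x))\sigma_{y,y^{-1}x}\alpha_x(a)\,dy
\]
with
\[
(f*_{\alpha,\sigma}(\rho_A(a)g))(x)=\int_G f(y)\alpha_y(g(y^{-1}x)\alpha_{y^{-1}x}(a))\sigma_{y,y^{-1}x}\,dy.
\]
The two integrands agree pointwise provided $\sigma_{y,y^{-1}x}\alpha_x(a)=\alpha_y(\alpha_{y^{-1}x}(a))\sigma_{y,y^{-1}x}$. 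This is exactly the content of Definition \ref{TwistedBanachSystem}\eqref{x,yAd} applied to the pair $(y,y^{-1}x)$: since $\alpha_y\alpha_{y^{-1}x}=\op{Ad}(\sigma_{y,y^{-1}x})\circ\alpha_x$, one gets $\alpha_y(\alpha_{y^{-1}x}(a))=\sigma_{y,y^{-1}x}\alpha_x(a)\sigma_{y,y^{-1}x}^{-1}$, and multiplying on the right by $\sigma_{y,y^{-1}x}$ yields the required equality.

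Finally, for (7) I will combine the two pictures developed above: expand $(f*_{\alpha,\sigma}(\lambda_A(a)g))(x)$ as $\int_G f(y)\alpha_y(ag(y^{-1}x))\sigma_{y,y^{-1}x}\,dy$, split $\alpha_y(ag(y^{-1}x))=\alpha_y(a)\alpha_y(g(y^{-1}x))$ using that $\alpha_y$ is an algebra homomorphism, and recognize the first factor as $(\rho_A(a)f)(y)$, which gives the other side directly. Throughout I will use, without further comment, that all integrands are Bochner integrable (continuity of $\alpha$ and strict continuity of $\sigma$ ensure this on $C_c(G,A)$, and a density argument extends to $L^1$), and that multiplier elements of norm one preserve the relevant norm estimates.
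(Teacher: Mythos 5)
Your proposal is correct and follows essentially the same route as the paper's proof: parts (1)--(3) and (5)--(7) by direct unfolding plus the covariance relation, and part (4) via exactly the identity $\sigma_{y,y^{-1}x}\alpha_x(a)=\alpha_y(\alpha_{y^{-1}x}(a))\sigma_{y,y^{-1}x}$ obtained from Definition \ref{TwistedBanachSystem}\eqref{x,yAd} applied to the pair $(y,y^{-1}x)$. No gaps.
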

\begin{proof}
A direct computation shows that $a(f*_{\alpha, \sigma} g)(x)=((\lambda_A(a)f) *_{\alpha, \sigma} g)(x)$ 
for any $x \in G$, so Part \eqref{ALeftC1} follows. Next, 
\[
(\pi \rtimes u)(\lambda_A(a)f) = \int_G \pi(a)\pi(f(x))u_x dx = \pi(a) (\pi \rtimes u)(f),
\]
proving Part \eqref{ALeftC2}. Part \eqref{ALeftC3} is now a direct consequence of Part \eqref{ALeftC2}
(exactly as in Lemma 6.3 of \cite{CPBAI_11}). 

For Part \eqref{ARightC1}, take any $x \in G$ and using Condition \eqref{x,yAd} from Definition \ref{TwistedBanachSystem} in the second step we get
\begin{align*}
[\rho_A(a)(f*_{\alpha, \sigma} g)](x) & = \int_G f(y)\alpha_y(g(y^{-1}x))\sigma(y, y^{-1}x) \alpha_x(a)  dy\\
& =  \int_G f(y)\alpha_y(g(y^{-1}x))\alpha_y(\alpha_{y^{-1}x}(a))  \sigma(y, y^{-1}x) dy\\
& =  \int_G f(y)\alpha_y(g(y^{-1}x)\alpha_{y^{-1}x}(a))  \sigma(y, y^{-1}x) dy\\
& =  \int_G f(y)\alpha_y( [\rho_A(a)g](y^{-1}x) )  \sigma(y, y^{-1}x) dy\\
& = [f*_{\alpha, \sigma} (\rho_A(a)g)](x).
\end{align*}
Part \eqref{ARightC2} is a direct consequence of Condition \eqref{CovC4} in Definition \eqref{CovariantRep}, and Part  \eqref{ARightC3} follows in turn from  Part \eqref{ARightC2}, 
see also Proposition 6.5 in \cite{CPBAI_11}. 

Finally, to show Part \eqref{A_MULT}, we take any $x \in G$ and compute
\begin{align*}
[f*_{\alpha, \sigma} (\lambda_A(a)g)](x) & =  \int_G f(y)\alpha_y( [\lambda_A(a)g](y^{-1}x) )  \sigma(y, y^{-1}x) dy\\
& =  \int_G f(y)\alpha_y(a)\alpha_y(g(y^{-1}x) )  \sigma(y, y^{-1}x) dy\\
& =  \int_G [\rho_A(a)f](y) \alpha_y(g(y^{-1}x) )  \sigma(y, y^{-1}x) dy\\
& =[(\rho_A(a)f)*_{\alpha, \sigma} g](x),
\end{align*}
so we are done. 
\end{proof}

Observe now that Parts \eqref{ALeftC3} and \eqref{ARightC3} in the previous Lemma 
show that, for each $a \in A$,  the maps $\lambda_A(a)$ and $\rho_A(a)$ give rise 
to bounded operators $\lambda^{\mathcal{R}}_A(a),\rho^{\mathcal{R}}_A(a) \in \Li(F_{\mathcal{R}}(G, A, \alpha, \sigma))$ such that if $\tau_{\mathcal{R}} \colon L^1(G,A, \alpha, \sigma) \to F_{\mathcal{R}}(G, A, \alpha, \sigma)$ is the map from \eqref{Tau_R}, then 
\begin{align*}
\lambda^{\mathcal{R}}_A(a) \circ \tau_{\mathcal{R}} & = \tau_{\mathcal{R}}  \circ \lambda_A(a), 
& \| \lambda^{\mathcal{R}}_A(a) \| & \leq C_{\mathcal{R}} \|a\|,\\
\rho^{\mathcal{R}}_A(a) \circ \tau_{\mathcal{R}} & =  \tau_{\mathcal{R}}  \circ\rho_A(a),
& \| \rho^{\mathcal{R}}_A(a) \| & \leq C_{\mathcal{R}} \|a\|.
\end{align*}
Further, Parts \eqref{ALeftC1}, \eqref{ARightC1}, and \eqref{A_MULT}  in Lemma \ref{propAtoM(L^1)} give the desired canonical map $(\lambda^{\mathcal{R}}_A, \rho^{\mathcal{R}}_A) \colon A \to M(F_{\mathcal{R}}(G, A, \alpha, \sigma))$ by letting for each $a \in A$ 
\begin{equation}\label{A^RtoM}
(\lambda^{\mathcal{R}}_A, \rho^{\mathcal{R}}_A)(a) \coloneqq (\lambda^{\mathcal{R}}_A(a), \rho^{\mathcal{R}}_A(a)).
\end{equation}
In a similar fashion, we will now define a canonical map $ G \to M(F_{\mathcal{R}}(G,A, \alpha, \sigma))$. 
It is worth mentioning that, in contrast to Definition \ref{AtoM(L^1)}, in which the pair $(\lambda_A, \rho_A)$ coincides with $(i_A, j_A)$ from the untwisted case (see Proposition 5.3. in \cite{CPBAI_11}), 
we will now need the twist $\sigma$ in order to define the pair $(\lambda_G, \rho_G)$.

\begin{definition}\label{GtoM(L^1)}
For each $y \in G$, define $\lambda_G(y) \colon L^1(G,A, \alpha, \sigma) \to L^1(G,A, \alpha, \sigma)$
by letting, for any $f \in L^1(G,A,\alpha,\sigma)$ and $x\in G$, 
\[
(\lambda_G(y)f)(x) \coloneqq \alpha_y(f(y^{-1}x))\sigma_{y,y^{-1}x}.
\]
Similarly, for $y \in G$ we define a map $\rho_G(y) \colon L^1(G,A, \alpha, \sigma) \to L^1(G,A, \alpha, \sigma)$ such that for any $f \in L^1(G,A,\alpha,\sigma)$ and $x\in G$, we put
\[
 (\rho_G(y)f)(x) \coloneqq f(xy^{-1})\sigma_{xy^{-1}, y}\Delta(y^{-1}).
\]
\end{definition}

\begin{lemma}\label{propGtoM(L^1)}
Let $f, g \in  L^1(G,A,\alpha,\sigma)$, let $y \in G$, and let $(\pi, u) \in \mathcal{R}$. Then 
\begin{enumerate}
\item $\lambda_G(y)(f*_{\alpha, \sigma} g ) = (\lambda_G(y)f) *_{\alpha, \sigma} g$,  \label{GLeftC1}
\item $(\pi \rtimes u)(\lambda_G(y)f)  = u_y (\pi \rtimes u)(f)$, \label{GLeftC2}
\item $\| \lambda_G(y) f\|_\mathcal{R} = \| f \|_{\mathcal{R}}$,\label{GLeftC3}
\item $\rho_G(y)(f*_{\alpha, \sigma} g ) = f*_{\alpha, \sigma} (\rho_G(y)g)$,\label{GRightC1}
\item $(\pi \rtimes u)(\rho_G(y)f)  =  (\pi \rtimes u)(f)u_y$,\label{GRightC2}
\item $\| \rho_G(y) f\|_\mathcal{R} = \| f \|_{\mathcal{R}}$,\label{GRightC3}
\item $f*_{\alpha, \sigma} (\lambda_G(y)g) =  (\rho_G(y)f) *_{\alpha, \sigma} g$. \label{G_MULT}
\end{enumerate}
\end{lemma}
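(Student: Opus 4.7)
The plan is to verify each of the seven identities by direct computation, exactly in parallel with the untwisted statements of Lemma \ref{propAtoM(L^1)}. The two new ingredients compared to the untwisted case are (i) the $2$-cocycle identity, Condition \eqref{ASzxy} of Definition \ref{TwistedBanachSystem}, which is what makes $\lambda_G$ and $\rho_G$ interact well with the twisted convolution $*_{\alpha,\sigma}$, and (ii) the modular function $\Delta$, which enters through the right-translation piece of the definition of $\rho_G(y)$ and must be tracked carefully under change of variables. No use of $C_\mathcal{R}\le 1$ is needed here, so we are working purely at the $L^1$-level, and the norm estimates \eqref{GLeftC3} and \eqref{GRightC3} will follow as corollaries, not as standalone arguments.

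For the multiplicativity statements \eqref{GLeftC1}, \eqref{GRightC1}, \eqref{G_MULT}, I would unwind both sides of the claimed identity at a point $x\in G$, expand the twisted convolution, and then use the cocycle relation $\alpha_z(\sigma_{x',y'})\sigma_{z,x'y'}=\sigma_{z,x'}\sigma_{zx',y'}$ with the appropriate group elements to move $\sigma$'s past the $\alpha$'s. For \eqref{GLeftC1} one computes $[\lambda_G(y)(f*_{\alpha,\sigma}g)](x)$, brings $\alpha_y$ inside the integral, applies $\alpha_y\alpha_z = \op{Ad}(\sigma_{y,z})\alpha_{yz}$ together with the cocycle identity (applied with first entry $y$), and then substitutes $w=yz$ using left invariance of $\nu_G$ to recognize the integrand as $(\lambda_G(y)f)(w)\alpha_w(g(w^{-1}x))\sigma_{w,w^{-1}x}$. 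Parts \eqref{GRightC1} and \eqref{G_MULT} proceed analogously; in \eqref{GRightC1} one instead applies the cocycle with last entry $y$, which produces the exact factor $\alpha_z(\sigma_{z^{-1}xy^{-1},y})\sigma_{z,z^{-1}x}=\sigma_{z,z^{-1}xy^{-1}}\sigma_{xy^{-1},y}$ needed to bring the right-translation and the $\sigma_{xy^{-1},y}\Delta(y^{-1})$ factor through the convolution. In \eqref{G_MULT} a change of variable $w=zy^{-1}$ (which contributes $\Delta(y)$) cancels the $\Delta(y^{-1})$ in $\rho_G(y)f$ and matches the two sides.

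For \eqref{GLeftC2} and \eqref{GRightC2}, the main ingredients are the covariance relation $\pi\circ\alpha_y=\op{Ad}(u_y)\circ\pi$ from \eqref{CovC4} together with $u_xu_y=\widehat\pi(\sigma_{x,y})u_{xy}$ from \eqref{CovC3}. Concretely, for \eqref{GLeftC2} I would compute
\[
(\pi\rtimes u)(\lambda_G(y)f)=\int_G u_y\pi(f(y^{-1}x))u_y^{-1}\widehat\pi(\sigma_{y,y^{-1}x})u_x\,dx,
\]
recognize $u_y^{-1}\widehat\pi(\sigma_{y,y^{-1}x})u_x=u_{y^{-1}x}$, and then apply left invariance via $w=y^{-1}x$. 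For \eqref{GRightC2}, the corresponding computation uses $\widehat\pi(\sigma_{xy^{-1},y})u_x=u_{xy^{-1}}u_y$, and the right-translation change of variable $w=xy^{-1}$ contributes a factor $\Delta(y)$ that exactly cancels the $\Delta(y^{-1})$ in the definition of $\rho_G(y)f$. Parts \eqref{GLeftC3} and \eqref{GRightC3} are then immediate: since each $u_y\in\op{Iso}(E)$, the identities from \eqref{GLeftC2} and \eqref{GRightC2} give $\|(\pi\rtimes u)(\lambda_G(y)f)\|=\|(\pi\rtimes u)(f)\|=\|(\pi\rtimes u)(\rho_G(y)f)\|$ for every $(\pi,u)\in\mathcal{R}$, and taking the supremum over $\mathcal{R}$ yields equality in $\|\cdot\|_\mathcal{R}$ (not just an inequality).

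The only real obstacle is bookkeeping: in each of \eqref{GLeftC1}, \eqref{GRightC1}, \eqref{G_MULT} one must choose the correct specialization of the cocycle identity so that the $\sigma$'s produced by moving $\alpha$'s past each other, the $\sigma$'s already present in $\lambda_G$ or $\rho_G$, and the $\sigma$'s in the convolution all collapse to the required expression. Keeping the arguments of $\sigma$ straight, and being consistent about whether the change of variable is $w=yz$ (left translation, no $\Delta$) or $w=zy^{-1}$ (right translation, contributing $\Delta(y)$), is the only place where a mistake is likely.
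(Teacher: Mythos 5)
Your proposal is correct and follows essentially the same route as the paper's proof: direct pointwise computation of the convolution identities using the cocycle relation (Condition \eqref{ASzxy}) and $\alpha_x\alpha_y=\op{Ad}(\sigma_{x,y})\circ\alpha_{xy}$, the integrated identities via Conditions \eqref{CovC3} and \eqref{CovC4} together with the appropriate left or right translation (with the $\Delta(y^{-1})$ in $\rho_G(y)$ cancelling the modular factor exactly as you describe), and the norm equalities \eqref{GLeftC3}, \eqref{GRightC3} deduced from \eqref{GLeftC2}, \eqref{GRightC2} since each $u_y$ is an invertible isometry. The only differences are cosmetic (e.g., which side of each identity you expand first).
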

\begin{proof}
Let $x \in G$, by definition we get on the one hand 
\[
[\lambda_G(y)(f*_{\alpha, \sigma} g )](x)=\alpha_y\left( \int_G f(z) \alpha_{z}(g(z^{-1}y^{-1}x))\sigma_{z,z^{-1}y^{-1}x} dz\right)\sigma_{y,y^{-1}x}.
\]
On the other hand, letting $z=y^{-1}w$ together with left invariance of Haar measure in the third step, 
and using Conditions \eqref{x,yAd} and \eqref{ASzxy} from Definition \ref{TwistedBanachSystem} in the fourth and fifth steps, respectively, we get
\begin{align*}
[(\lambda_G(y)f)*_{\alpha, \sigma} g )](x)
 & = \int_G (\lambda_G(y)f)(w) \alpha_w(g(w^{-1}x))\sigma_{w,w^{-1}x}dw\\
 & = \int_G \alpha_y(f(y^{-1}w))\sigma_{y,y^{-1}w} \alpha_w(g(w^{-1}x))\sigma_{w,w^{-1}x}dw\\
 & =  \int_G \alpha_y(f(z))\sigma_{y,z} \alpha_{yz}(g(z^{-1}y^{-1}x))\sigma_{yz,z^{-1}y^{-1}x}dz\\
  & =  \int_G \alpha_y(f(z))\alpha_{y}(\alpha_z(g(z^{-1}y^{-1}x)))\sigma_{y,z} \sigma_{yz,z^{-1}y^{-1}x}dz\\
    & =  \int_G \alpha_y(f(z))\alpha_{y}(\alpha_z(g(z^{-1}y^{-1}x))\sigma_{z,z^{-1}y^{-1}x}) \sigma_{y,y^{-1}x}dz\\
    & = \alpha_y \left(\int_G f(z)\alpha_z(g(z^{-1}y^{-1}x))\sigma_{z,z^{-1}y^{-1}x} dz\right) \sigma_{y,y^{-1}x}.
\end{align*}
This proves that $\lambda_G(y)(f*_{\alpha, \sigma} g )=(\lambda_G(y)f)*_{\alpha, \sigma} g $, so Part \eqref{GLeftC1} is done. Next, by letting $z=y^{-1}x$ in the second step and using Conditions \eqref{CovC3} and \eqref{CovC4} from Definition \ref{CovariantRep} in the third and fourth steps, respectively,
 we compute 
\begin{align*}
(\pi \rtimes u)(\lambda_G(y)f ) 
& = \int_G \pi(\alpha_y(f(y^{-1}x))\sigma_{y,y^{-1}x}) u_x dx \\
& = \int_G \pi(\alpha_y(f(z))\sigma_{y,z}) u_{yz} dz \\
& = \int_G \pi(\alpha_y(f(z))) u_{y}u_{z} dz \\
& = \int_G u_{y} \pi(f(z))u_{z} dz \\
& = u_{y}(\pi \rtimes u)(f).
\end{align*}
This proves Part \eqref{GLeftC2} and Part \eqref{GLeftC3} follows as a direct consequence
using that $u_y$ is an invertible isometry. 

For Part \eqref{GRightC1}, let $x \in G$, and using Condition \eqref{ASzxy} from Definition \ref{TwistedBanachSystem} in the third step we obtain 
\begin{align*}
[\rho_G(y)(f*_{\alpha, \sigma} g )](x) & = (f*_{\alpha, \sigma} g)(xy^{-1})\sigma_{xy^{-1}, y}\Delta(y^{-1})\\
& = \int_G f(z)\alpha_z(g(z^{-1}xy^{-1}))\sigma_{z,z^{-1}xy^{-1}}\sigma_{xy^{-1},y}\Delta(y^{-1})dz\\
& = \int_G f(z)\alpha_z(g(z^{-1}xy^{-1}) \sigma_{z^{-1}xy^{-1},y}\Delta(y^{-1}))\sigma_{z,z^{-1}x}dz\\
&=\int_G f(z)\alpha_z([\rho_G(y)g](z^{-1}x))\sigma_{z,z^{-1}x}dz\\
& = [f*_{\alpha, \sigma} (\rho_G(y)g )](x).
\end{align*}
Next, observe that letting $z=xy^{-1}$, so that $dz=\Delta(y^{-1})dx$, 
in the third step, and using Condition \eqref{CovC3} from Definition \ref{CovariantRep}, 
at the forth one, gives
\begin{align*}
(\pi \rtimes u)(\rho_G(y)f)  & = \int_G \pi([\rho_G(y)f)](x))u_x dx \\
& = \int_G \pi(f(xy^{-1})\sigma_{xy^{-1}, y}\Delta(y^{-1}))u_x dx \\
& = \int_G \pi(f(z)\sigma_{z, y})u_{zy} dz \\
& = \int_G \pi(f(z))u_{z}u_{y} dz \\
& = (\pi \rtimes u)(f)u_y,
\end{align*}
proving Part \eqref{ARightC2}, and as before Part \eqref{ARightC3} now also follows using that $u_y$ is an invertible isometry. 

Finally, to show Part \eqref{G_MULT}, let $x \in G$, 
and using Conditions \eqref{x,yAd} and \eqref{ASzxy} from Definition \ref{TwistedBanachSystem} in the fourth step and fifth steps, respectively, and then  
letting $w=zy$, so that $dz=\Delta(y^{-1})dw$, in the sixth step, we find 
\begin{align*}
[f*_{\alpha, \sigma} (\lambda_G(y)g)](x) & = \int_G f(z) \alpha_z([\lambda_G(y)g](z^{-1}x))\sigma_{z,z^{-1}x}dz\\
& = \int_G f(z) \alpha_z(\alpha_{y}(g(y^{-1}z^{-1}x))\sigma_{y,y^{-1}z^{-1}x})\sigma_{z,z^{-1}x}dz\\
& = \int_G f(z) \alpha_z(\alpha_{y}(g(y^{-1}z^{-1}x))) \sigma_{z,y}\sigma_{zy, y^{-1}z^{-1}x} dz\\
& = \int_G f(z) \sigma_{z,y}\alpha_{zy}(g(y^{-1}z^{-1}x)) \sigma_{zy, y^{-1}z^{-1}x} dz\\
& = \int_G f(wy^{-1}) \sigma_{wy^{-1},y}\Delta(y^{-1})\alpha_{w}(g(w^{-1}x)) \sigma_{w, w^{-1}x} dw\\
& = \int_G (\rho_G(y)f)(w)\alpha_{w}(g(w^{-1}x)) \sigma_{w, w^{-1}x} dw\\
& =[(\rho_G(y)f) *_{\alpha, \sigma} g](x),
\end{align*}
finishing the proof. 
\end{proof}

As with $(\lambda_A, \rho_A)$, Parts \eqref{GLeftC3} and \eqref{GRightC3} in the previous Lemma 
show that, for each $y \in G$,  the maps $\lambda_G(y)$ and $\rho_G(y)$ give rise 
to bounded isometries $\lambda^{\mathcal{R}}_G(y),\rho^{\mathcal{R}}_G(y) \in \Li(F_{\mathcal{R}}(G, A, \alpha, \sigma))$ such that if $\tau_{\mathcal{R}} \colon L^1(G,A, \alpha, \sigma) \to F_{\mathcal{R}}(G, A, \alpha, \sigma)$ is the map from \eqref{Tau_R}, then 
\begin{align*}
\lambda^{\mathcal{R}}_G(y) \circ \tau_{\mathcal{R}} & = \tau_{\mathcal{R}}  \circ \lambda_G(y), 
& \| \lambda^{\mathcal{R}}_G(y) \| & =1,\\
\rho^{\mathcal{R}}_G(y) \circ \tau_{\mathcal{R}} & =  \tau_{\mathcal{R}}  \circ\rho_G(y),
& \| \rho^{\mathcal{R}}_G(y) \| & = 1.
\end{align*}
Further, Parts \eqref{GLeftC1}, \eqref{GRightC1}, and \eqref{G_MULT}  in Lemma \ref{propGtoM(L^1)} give the desired canonical map $(\lambda^{\mathcal{R}}_G, \rho^{\mathcal{R}}_G) \colon G \to M(F_{\mathcal{R}}(G, A, \alpha, \sigma))$ by letting for each $y \in G$ 
\begin{equation}\label{G^RtoM}
(\lambda^{\mathcal{R}}_G, \rho^{\mathcal{R}}_G)(y) \coloneqq (\lambda^{\mathcal{R}}_G(y), \rho^{\mathcal{R}}_G(y)).
\end{equation}

The isometries $\lambda^{\mathcal{R}}_G(y)$ and $\rho^{\mathcal{R}}_G(y)$ are in fact invertible ones. 
Since we only need the explicit inverse of $\lambda^{\mathcal{R}}_G(y)$, we present 
the details below and omit the analogous computations for $\rho^{\mathcal{R}}_G(y)$. 

\begin{definition}\label{L_G-inverse}
For each $y \in G$, define $\kappa_G(y) \colon L^1(G,A, \alpha, \sigma) \to L^1(G,A, \alpha, \sigma)$
for $f \in L^1(G,A, \alpha, \sigma)$ and $x \in G$, as 
\[
(\kappa_G(y)f)(x) \coloneqq \alpha_y^{-1}(f(yx)\sigma_{yx,x^{-1}})\sigma_{x,x^{-1}}^{-1}.
\]
\end{definition}

\begin{lemma}\label{L_G-inversePf}
Let $f \in L^1(G, A,\alpha, \sigma)$, let $y \in G$, and let $(\pi, u) \in \mathcal{R}$. Then
\begin{enumerate}
\item $\lambda_G(y) \circ \kappa_G(y) = \kappa_G(y) \circ \lambda_G(y)  = \op{id}_{ L^1(G, A,\alpha, \sigma)}$, \label{kappa_inv}
\item $(\pi \rtimes u)(\kappa_G(y)f)=u_y^{-1}(\pi \rtimes u)(f)$, \label{kaappa_if}
\item $\| \kappa_G(y)f\|_{\mathcal{R}} = \| f \|_{\mathcal{R}}$. \label{kappa_R_norm}
\end{enumerate}
In particular, $\kappa_G(y)$ gives rise to a bounded isometry $\kappa^{\mathcal{R}}_G(y) \in \Li(F_{\mathcal{R}}(G, A, \alpha, \sigma))$ that is 
the inverse of $\lambda_G^{\mathcal{R}}(y)$. 
\end{lemma}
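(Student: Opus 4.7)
The plan is to verify the three numbered assertions by direct computation on $L^1(G,A,\alpha,\sigma)$, and then obtain the ``in particular'' statement by extending $\kappa_G(y)$ to $F_{\mathcal{R}}$ using density of $\tau_{\mathcal{R}}(L^1(G,A,\alpha,\sigma))$. A useful preliminary is the identity $\widehat{\pi}(\sigma_{a,b}) = u_a u_b u_{ab}^{-1}$, which follows from Condition \eqref{CovC3} in Definition \ref{CovariantRep} once one observes (by setting $a=b=1_G$ and using $\sigma_{1_G, 1_G}=\op{id}_{M(A)}$ together with invertibility of $u_{1_G}$) that $u_{1_G}=\op{id}_E$. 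In particular $\widehat{\pi}(\sigma_{x,x^{-1}}) = u_x u_{x^{-1}}$ and $\widehat{\pi}(\sigma_{yx,x^{-1}}) = u_{yx} u_{x^{-1}} u_y^{-1}$; these are the only $u$-versus-$\widehat{\pi}$ conversions the proof will need.

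For Part \eqref{kappa_inv} I would substitute the definitions of $\lambda_G(y)$ and $\kappa_G(y)$ into each other. Unfolding $(\kappa_G(y)\lambda_G(y)f)(x)$ and pulling an inner $\alpha_y^{-1}\alpha_y$ out of $f(x)$ reduces everything to the algebraic identity $\alpha_y(\sigma_{x,x^{-1}}) = \sigma_{y,x}\sigma_{yx,x^{-1}}$, which is precisely Condition \eqref{ASzxy} of Definition \ref{TwistedBanachSystem} applied with $z=y$ and pair $(x,x^{-1})$, together with $\sigma_{y,1_G}=\op{id}_{M(A)}$. The composition $\lambda_G(y)\circ\kappa_G(y)$ is handled analogously, this time applying the cocycle identity with pair $(y^{-1}x, x^{-1}y)$, which also multiplies to $1_G$ so that the corresponding $\sigma_{y,\bullet}$ factor degenerates to the identity and the twist factors visibly cancel.

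For Part \eqref{kaappa_if} I would substitute the definition of $\kappa_G(y)f$ into the integrand of $(\pi\rtimes u)(\kappa_G(y)f)$ and use Condition \eqref{CovC4} in Definition \ref{CovariantRep} (extended to $M(A)$) to convert $\pi\circ\alpha_y^{-1}$ into $u_y^{-1}\pi(\cdot)u_y$; the preliminary identity then rewrites the two twist factors so that all intermediate $u$'s telescope, leaving $u_y^{-1}\pi(f(yx))u_{yx}$ in the integrand. A left-invariant change of variable $z = yx$ produces $u_y^{-1}(\pi\rtimes u)(f)$. Part \eqref{kappa_R_norm} is then immediate since $u_y^{-1}\in\op{Iso}(E)$, and the ``in particular'' clause follows by extending $\kappa_G(y)$ by continuity to an isometry $\kappa^{\mathcal{R}}_G(y)$ on $F_{\mathcal{R}}(G,A,\alpha,\sigma)$ and transferring Part \eqref{kappa_inv} to the dense subspace $\tau_{\mathcal{R}}(L^1(G,A,\alpha,\sigma))$. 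I expect the main obstacle to be spotting the correct cocycle substitutions in Part \eqref{kappa_inv}: once one notices that both relevant twist pairs multiply to $1_G$, the nontrivial factors collapse cleanly, but the definition of $\kappa_G$ is tuned in such an opaque way that without this observation the three $\sigma$-factors look as if they should not cancel.
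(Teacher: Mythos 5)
Your proposal is correct and follows essentially the same route as the paper: Part \eqref{kappa_inv} via the cocycle identity applied to the pairs $(x,x^{-1})$ and $(y^{-1}x,x^{-1}y)$ together with $\sigma_{\bullet,1_G}=\op{id}_{M(A)}$, and Part \eqref{kaappa_if} via the conversion $\widehat{\pi}(\sigma_{a,b})=u_au_bu_{ab}^{-1}$ plus the substitution $z=yx$ (the paper packages this conversion as its Equation \eqref{TecnicalU}, but the telescoping is the same computation). The remaining parts and the extension to $F_{\mathcal{R}}(G,A,\alpha,\sigma)$ are handled as in the paper.
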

\begin{proof}
For $x,y \in G$ notice that  Conditions \eqref{ASzxy} and \eqref{sigma_1=1} in Definition \ref{TwistedBanachSystem} give at once that 
\[
\alpha_y(\sigma_{y^{-1}x,x^{-1}y}^{-1} ) = \sigma_{x,x^{-1}y}^{-1}\sigma_{y,y^{-1}x}^{-1}.
\]
Thus, using this last equality in the forth step below gives 
\begin{align*}
((\lambda_G(y) \circ \kappa_G(y))f)(x) & = \alpha_y(\kappa_G(y)(y^{-1}x)) \sigma_{y,y^{-1}x}\\
& = \alpha_y(     \alpha_y^{-1}(f(x)\sigma_{x,x^{-1}y})\sigma_{y^{-1}x,x^{-1}y}^{-1}      )       \sigma_{y,y^{-1}x}\\
& =    f(x)\sigma_{x,x^{-1}y}  \alpha_y(  \sigma_{y^{-1}x,x^{-1}y}^{-1}      )       \sigma_{y,y^{-1}x}\\
& =    f(x). 
\end{align*}
Similarly, for $x \in G$, we compute using  Conditions \eqref{ASzxy} and \eqref{sigma_1=1} in Definition \ref{TwistedBanachSystem} 
in the fourth and fifth steps respectively 
\begin{align*}
((\kappa_G(y) \circ \lambda_G(y))f)(x) & = \alpha_y^{-1}(\lambda_G(y)(yx)\sigma_{yx,x^{-1}})\sigma_{x,x^{-1}}^{-1}\\
& = \alpha_y^{-1}(\alpha_y(f(x))\sigma_{y,x}\sigma_{yx,x^{-1}})\sigma_{x,x^{-1}}^{-1}\\
& = f(x)\alpha_y^{-1}(\sigma_{y,x}\sigma_{yx,x^{-1}})\sigma_{x,x^{-1}}^{-1}\\
& = f(x)\alpha_y^{-1}(\alpha_y(\sigma_{x,x^{-1}}\sigma_{y,1_G}))\sigma_{x,x^{-1}}^{-1}\\
& = f(x).
\end{align*}
This proves Part \eqref{kappa_inv}. Next, 
for any $y, z \in G$, 
Conditions \eqref{CovC3} and \eqref{CovC4} in Definition \ref{CovariantRep}
together with Condition \eqref{sigma_1=1} in Definition \ref{TwistedBanachSystem} and the fact that $\widehat{\pi}$ is a unital representation 
of $M(A)$ show that 
\begin{equation}\label{TecnicalU}
 u_y \pi(  \alpha_y^{-1}(\sigma_{z,z^{-1}y})\sigma_{y^{-1}z,z^{-1}y}^{-1}   )u_{y^{-1}z} = u_z.
\end{equation}
Therefore, letting $z=yx$ in the second step, 
using Condition \eqref{CovC4} from Definition \ref{CovariantRep}
in the third step, and Equation \eqref{TecnicalU} in the fourth step we get 
\begin{align*}
(\pi \rtimes u)(\kappa_G(y)f) & = 
\int_G \pi(  \alpha_y^{-1}(f(yx)\sigma_{yx,x^{-1}})\sigma_{x,x^{-1}}^{-1}   )u_x dx \\
& = \int_G \pi(  \alpha_y^{-1}(f(z)\sigma_{z,z^{-1}y})\sigma_{y^{-1}z,z^{-1}y}^{-1}   )u_{y^{-1}z} dz \\
& = \int_G u_{y}^{-1}\pi(f(z))  u_y \pi(  \alpha_y^{-1}(\sigma_{z,z^{-1}y})\sigma_{y^{-1}z,z^{-1}y}^{-1}   )u_{y^{-1}z} dz \\
& =u_{y}^{-1} \int_G \pi(f(z))  u_z dz \\
& = u_y^{-1}(\pi \rtimes u)(f).
\end{align*}
This proves Part \eqref{kaappa_if} and Part \eqref{kappa_R_norm} follows as a direct consequence, finishing the proof. 
\end{proof}

From now on we will assume that $C_\mathcal{R} \leq 1$, that is for each $(\pi, u) \in \mathcal{R}$ the map $\pi$ is a contractive representation. As a consequence, Theorem \ref{F_R:BAI} implies that $F_\mathcal{R}(A,G,\alpha, \sigma)$ has a cai and therefore 
we can and will isometrically identify $M(F_\mathcal{R}(A,G,\alpha, \sigma))$ with 
a closed subalgebra of $\Li(F_\mathcal{R}(A,G,\alpha, \sigma))$ via the map $(L,R) \mapsto L$. 

\begin{proposition}
The map $(\lambda^{\mathcal{R}}_A, \rho_A^{\mathcal{R}}) \colon A \to M(F_{\mathcal{R}}(G,A,\alpha, \sigma))$ from Equation \eqref{A^RtoM}  together with the map $(\lambda^{\mathcal{R}}_G, \rho_G^{\mathcal{R}}) \colon G \to M(F_{\mathcal{R}}(G,A,\alpha, \sigma))$ from Equation \eqref{G^RtoM} form a covariant representations of $(G,A,\alpha, \sigma)$ on the Banach space $F_{\mathcal{R}}(G,A,\alpha, \sigma)$. That is $(\lambda^{\mathcal{R}}_A, \rho_A^{\mathcal{R}}) $ is a nondegenerate representation of $A$ on $F_{\mathcal{R}}(G,A,\alpha, \sigma)$, $(\lambda^{\mathcal{R}}_G, \rho_G^{\mathcal{R}})$ is a strongly continuous map $G \to \op{Iso}(F_{\mathcal{R}}(G,A,\alpha, \sigma))$, and 
for all $a \in A$, $x, y \in G$ we have 
\begin{align*}
(\lambda^{\mathcal{R}}_G, \rho_G^{\mathcal{R}})(x) 
(\lambda^{\mathcal{R}}_G, \rho_G^{\mathcal{R}})(y) & =  \widehat{(\lambda^{\mathcal{R}}_A, \rho_A^{\mathcal{R}})}(\sigma_{x,y}) (\lambda^{\mathcal{R}}_G, \rho_G^{\mathcal{R}})(xy),\\
(\lambda^{\mathcal{R}}_A, \rho_A^{\mathcal{R}})(\alpha_x(a)) & =(\lambda^{\mathcal{R}}_G, \rho_G^{\mathcal{R}})(x) (\lambda^{\mathcal{R}}_A, \rho_A^{\mathcal{R}})(a) 
(\lambda^{\mathcal{R}}_G, \rho_G^{\mathcal{R}})(x)^{-1}.
\end{align*}
\end{proposition}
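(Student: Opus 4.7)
The plan is to verify each assertion by reducing to an identity at the $L^1(G,A,\alpha,\sigma)$ level. Since $C_\mathcal{R}\leq 1$, Theorem \ref{F_R:BAI} gives a cai for $F_\mathcal{R}(G,A,\alpha,\sigma)$, so I identify $M(F_\mathcal{R}(G,A,\alpha,\sigma))$ isometrically with a subalgebra of $\Li(F_\mathcal{R}(G,A,\alpha,\sigma))$ via $(L,R)\mapsto L$. Under this identification every identity in the proposition becomes an equality of bounded operators, which by density of $\tau_\mathcal{R}(L^1(G,A,\alpha,\sigma))$ (Remark \ref{Facts:F_R}(3)) together with the intertwining relations $\lambda_A^\mathcal{R}(a)\circ\tau_\mathcal{R} = \tau_\mathcal{R}\circ\lambda_A(a)$ and $\lambda_G^\mathcal{R}(y)\circ\tau_\mathcal{R} = \tau_\mathcal{R}\circ\lambda_G(y)$ (and the analogues for $\rho$) recorded after Lemmas \ref{propAtoM(L^1)} and \ref{propGtoM(L^1)}, is determined by its action on $\tau_\mathcal{R}(f)$ for $f\in L^1(G,A,\alpha,\sigma)$.

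That $(\lambda_A^\mathcal{R},\rho_A^\mathcal{R})$ is a representation of $A$ follows from the pointwise $L^1$-identities $\lambda_A(ab)=\lambda_A(a)\lambda_A(b)$ and $\rho_A(ab)=\rho_A(b)\rho_A(a)$, both immediate from Definition \ref{AtoM(L^1)} and the multiplicativity of each $\alpha_x$. For nondegeneracy, a cai $(e_\lambda)$ of $A$ combined with dominated convergence gives $\lambda_A(e_\lambda)f \to f$ in $L^1$-norm, and the inequality $\|\cdot\|_\mathcal{R}\leq\|\cdot\|_1$ (which holds because $C_\mathcal{R}\leq 1$) transfers this to $\lambda_A^\mathcal{R}(e_\lambda)\tau_\mathcal{R}(f)\to\tau_\mathcal{R}(f)$ in $F_\mathcal{R}(G,A,\alpha,\sigma)$. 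The isometry and invertibility of $(\lambda_G^\mathcal{R},\rho_G^\mathcal{R})(y)$ are Lemma \ref{propGtoM(L^1)}(\ref{GLeftC3}),(\ref{GRightC3}) and Lemma \ref{L_G-inversePf}(\ref{kappa_inv}), with $\kappa_G^\mathcal{R}(y)$ giving the inverse on the $L$-component.

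The main technical point is strong continuity of $y\mapsto\lambda_G^\mathcal{R}(y)$. Since $\|\lambda_G^\mathcal{R}(y)\|=1$ uniformly, a $3\varepsilon$-argument based on density of $\tau_\mathcal{R}(C_c(G,A,\alpha,\sigma))$ reduces the problem to showing that, for $f\in C_c(G,A,\alpha,\sigma)$, the assignment $y\mapsto\lambda_G(y)f$ is continuous in the inductive limit topology on $C_c(G,A,\alpha,\sigma)$; Remark \ref{Facts:F_R}(\ref{ILT->F}) then transfers continuity to $F_\mathcal{R}(G,A,\alpha,\sigma)$. For $y$ in a fixed compact neighborhood $V$ of $y_0$, every $\lambda_G(y)f$ is supported in the compact set $V\cdot\cj{\op{supp}}(f)$, and uniform convergence in $x$ of $\alpha_y(f(y^{-1}x))\sigma_{y,y^{-1}x}\to\alpha_{y_0}(f(y_0^{-1}x))\sigma_{y_0,y_0^{-1}x}$ is obtained by splitting the difference into an $\alpha$-piece, handled by strong continuity of $\alpha$ and uniform continuity of $f$, and a $\sigma$-piece, handled by joint strict continuity of $\sigma$ together with $\|\sigma_{y,y^{-1}x}\|\leq 1$; each piece becomes uniform via standard compactness arguments applied to $V\cdot\cj{\op{supp}}(f)$ and the compact subset $\{\alpha_{y_0}(f(y_0^{-1}x))\}\subseteq A$.

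For the covariance identities, both reduce to $L^1$-level computations. The second is cleanest by multiplying both sides on the right by $\lambda_G^\mathcal{R}(x)$ and invoking Lemma \ref{L_G-inversePf}(\ref{kappa_inv}); the resulting identity $\lambda_A^\mathcal{R}(\alpha_x(a))\lambda_G^\mathcal{R}(x)=\lambda_G^\mathcal{R}(x)\lambda_A^\mathcal{R}(a)$ holds at the $L^1$-level because $(\lambda_G(x)\lambda_A(a)g)(z)=\alpha_x(a\cdot g(x^{-1}z))\sigma_{x,x^{-1}z}=\alpha_x(a)\cdot(\lambda_G(x)g)(z)$. For the first, a direct unwinding produces
\[
(\lambda_G(x)\lambda_G(y)f)(z) = \alpha_x\bigl(\alpha_y(f(y^{-1}x^{-1}z))\sigma_{y,y^{-1}x^{-1}z}\bigr)\sigma_{x,x^{-1}z},
\]
and applying Condition \eqref{x,yAd} of Definition \ref{TwistedBanachSystem} to $\alpha_x\circ\alpha_y$ together with Condition \eqref{ASzxy} in the form $\alpha_x(\sigma_{y,y^{-1}x^{-1}z})\sigma_{x,x^{-1}z}=\sigma_{x,y}\sigma_{xy,y^{-1}x^{-1}z}$ collapses the expression to $\sigma_{x,y}\alpha_{xy}(f((xy)^{-1}z))\sigma_{xy,(xy)^{-1}z}$; this equals the $L^1$-level action of $\widehat{(\lambda_A^\mathcal{R},\rho_A^\mathcal{R})}(\sigma_{x,y})\lambda_G^\mathcal{R}(xy)$ under the canonical extension of $(\lambda_A^\mathcal{R},\rho_A^\mathcal{R})$ to $M(A)$ supplied by \eqref{Eq:hat_pi} and \eqref{Ma=Dc}.
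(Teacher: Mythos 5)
Your proposal is correct and follows essentially the same route as the paper's proof: identify $M(F_{\mathcal{R}}(G,A,\alpha,\sigma))$ inside $\Li(F_{\mathcal{R}}(G,A,\alpha,\sigma))$, reduce everything to $L^1$-level identities via the intertwining relations and density, invoke Lemma \ref{L_G-inversePf} for invertibility, and verify the covariance relations by direct computation with Conditions \eqref{x,yAd} and \eqref{ASzxy} of Definition \ref{TwistedBanachSystem}. The only differences are cosmetic: you spell out the nondegeneracy and strong-continuity arguments that the paper delegates to the untwisted case in \cite{CPBAI_11}, and you write out the $\lambda$-half of the second covariance identity where the paper writes out the $\rho$-half and omits the other.
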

\begin{proof}
Since $A$ has a cai, the nondegeneracy and strong continuity claims follow as in the untwisted case (see for instance \cite[Propositions 6.4 and 6.5]{CPBAI_11}). The fact that the image of $(\lambda^{\mathcal{R}}_G, \rho_G^{\mathcal{R}})$ is in $\op{Iso}(F_{\mathcal{R}}(G,A,\alpha, \sigma))$ follows at once 
from Lemma \ref{L_G-inversePf} and the discussion preceding the statement of this Proposition. 

Now it only suffices to show that, for any $x, y \in G$, the following two equations hold
\begin{align}\label{LR_Cov_1}
\lambda_G(x)\lambda_G(y) & = \widehat{\lambda_A}(\sigma_{x,y}),\lambda_G(xy) \\
\rho_G(y)\rho_G(x) & =\rho_G(xy) \widehat{\rho_A}(\sigma_{x,y}),\notag
\end{align}
 and that if in addition $a \in A$, then 
\begin{align}\label{LR_Cov_2}
\lambda_A(\alpha_x(a)) & = \lambda_G(x)\lambda_A(a)\lambda_G(x)^{-1}\\
\rho_A(\alpha_x(a)) & = \rho_G(x)^{-1}\rho_A(a)\rho_G(x),\notag
\end{align}
also hold. Indeed, for any $f \in L^1(G, A, \alpha, \sigma)$ and $z \in G$,
 using Conditions \eqref{ASzxy} and \eqref{x,yAd} from Definition \ref{TwistedBanachSystem} in the second and third steps, respectively, we get
\begin{align*}
[\lambda_G(x)\lambda_G(y)f](z) & = \alpha_x(\alpha_y(f(y^{-1}x^{-1}z))\sigma_{y,y^{-1}x^{-1}z})\sigma_{x,x^{-1}z} \\
& = \alpha_x(\alpha_y(f(y^{-1}x^{-1}z)) ) \sigma_{x,y}\sigma_{xy,x^{-1}z} \\
& = \sigma_{x,y}\alpha_{xy}(f(y^{-1}x^{-1}z)) \sigma_{xy,x^{-1}z} \\
& = [\widehat{\lambda}_A(\sigma_{x,y})\lambda_G(xy)f](z),
\end{align*}
which gives the first equality in Equation \eqref{LR_Cov_1}. The second one is proved similarly and therefore its proof is omitted. We end by showing the second equality in Equation \eqref{LR_Cov_2}, omitting this time 
the details for the proof of the first one. Indeed, if $f \in L^1(G, A, \alpha, \sigma)$ and $y \in G$, 
using Condition \ref{x,yAd} in Definition \ref{TwistedBanachSystem} in the second step, 
we get
\begin{align*}
[\rho_G(x)\rho_A(\alpha_x(a))f](y) & = f(yx^{-1})\alpha_{yx^{-1}}(\alpha_x(a))\sigma_{yx^{-1},x}\Delta(x^{-1}) \\
 & = f(yx^{-1})\sigma_{yx^{-1},x}\alpha_y(a)\Delta(x^{-1}) \\
  & = f(yx^{-1})\sigma_{yx^{-1},x}\Delta(x^{-1}) \alpha_y(a)\\
& =[\rho_A(a)\rho_G(x)f](y),
\end{align*}
so we are done. 
\end{proof}

Henceforth, we will often need the map $\iota_A \colon A \to M(A)$ from Equation \eqref{iota_A_map}.

\begin{proposition}\label{Universal_IntMap_Isom}
The space  $((\lambda_A^\mathcal{R}, \rho_A^\mathcal{R})  \rtimes (\lambda_G^\mathcal{R}, \rho_G^\mathcal{R}))(L^1(G,A, \alpha, \sigma))$ is a dense subset of $\iota_{F^{\mathcal{R}}(G,A,\alpha, \sigma)}(F^{\mathcal{R}}(G,A,\alpha, \sigma)) \subseteq M(F^{\mathcal{R}}(G,A,\alpha, \sigma))$.
Moreover, for any $f \in L^1(G,A, \alpha, \sigma)$, we have
\[
\| ((\lambda_A^\mathcal{R}, \rho_A^\mathcal{R})  \rtimes (\lambda_G^\mathcal{R}, \rho_G^\mathcal{R}))f\|=\| f \|_{\mathcal{R}}.
\]
\end{proposition}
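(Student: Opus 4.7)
The plan is to establish the single identity
\[
((\lambda_A^\mathcal{R}, \rho_A^\mathcal{R}) \rtimes (\lambda_G^\mathcal{R}, \rho_G^\mathcal{R}))(f) = \iota_{F_\mathcal{R}(G,A,\alpha, \sigma)}(\tau_\mathcal{R}(f))
\]
for every $f \in L^1(G, A, \alpha, \sigma)$; both conclusions of the proposition then fall out at once. Indeed, density of the image in $\iota_{F_\mathcal{R}(G,A,\alpha, \sigma)}(F_\mathcal{R}(G,A,\alpha, \sigma))$ follows from the density of $\tau_\mathcal{R}(L^1(G,A,\alpha, \sigma))$ in $F_\mathcal{R}(G,A,\alpha, \sigma)$ (Remark \ref{Facts:F_R}) together with the isometry of $\iota_{F_\mathcal{R}(G,A,\alpha, \sigma)}$ (which holds by standing assumption \ref{AhasCAI}, as recalled after Equation \eqref{iota_A_map}); and the norm identity reduces to $\|\iota_{F_\mathcal{R}(G,A,\alpha, \sigma)}(\tau_\mathcal{R}(f))\| = \|\tau_\mathcal{R}(f)\|_\mathcal{R} = \|f\|_\mathcal{R}$.

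Under the identification of $M(F_\mathcal{R}(G,A,\alpha, \sigma))$ with a subalgebra of $\Li(F_\mathcal{R}(G,A,\alpha, \sigma))$ via the left-multiplier component (set up immediately before the statement of this proposition), the target identity is the assertion that $((\lambda_A^\mathcal{R}, \rho_A^\mathcal{R}) \rtimes (\lambda_G^\mathcal{R}, \rho_G^\mathcal{R}))(f)$ acts on $F_\mathcal{R}(G,A,\alpha, \sigma)$ as left multiplication by $\tau_\mathcal{R}(f)$. Since both operators are bounded and $\tau_\mathcal{R}(L^1(G,A,\alpha, \sigma))$ is dense, it suffices to verify this agreement on vectors of the form $\tau_\mathcal{R}(g)$ with $g \in L^1(G,A,\alpha, \sigma)$.

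The main calculation uses the intertwining relations $\lambda_A^\mathcal{R}(a) \circ \tau_\mathcal{R} = \tau_\mathcal{R} \circ \lambda_A(a)$ and $\lambda_G^\mathcal{R}(x) \circ \tau_\mathcal{R} = \tau_\mathcal{R} \circ \lambda_G(x)$ recorded just before Equations \eqref{A^RtoM} and \eqref{G^RtoM}, together with the fact that $\tau_\mathcal{R} \colon L^1(G,A,\alpha, \sigma) \to F_\mathcal{R}(G,A,\alpha, \sigma)$ is a bounded homomorphism and hence commutes with Bochner integrals:
\begin{align*}
((\lambda_A^\mathcal{R}, \rho_A^\mathcal{R}) \rtimes (\lambda_G^\mathcal{R}, \rho_G^\mathcal{R}))(f)\,\tau_\mathcal{R}(g)
& = \int_G \lambda_A^\mathcal{R}(f(x))\,\lambda_G^\mathcal{R}(x)\,\tau_\mathcal{R}(g)\,dx \\
& = \int_G \tau_\mathcal{R}\bigl(\lambda_A(f(x))\,\lambda_G(x)\,g\bigr)\,dx \\
& = \tau_\mathcal{R}\!\left( \int_G \lambda_A(f(x))\,\lambda_G(x)\,g \,dx \right) \\
& = \tau_\mathcal{R}(f *_{\alpha, \sigma} g) = \tau_\mathcal{R}(f) *_{\alpha, \sigma} \tau_\mathcal{R}(g).
\end{align*}
The fourth equality is a Fubini computation: unwinding Definitions \ref{AtoM(L^1)} and \ref{GtoM(L^1)}, the inner $L^1$-valued integral evaluated at $y \in G$ is $\int_G f(x)\alpha_x(g(x^{-1}y))\sigma_{x, x^{-1}y}\,dx = (f *_{\alpha, \sigma} g)(y)$; the last equality uses that $\tau_\mathcal{R}$ is multiplicative.

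The only genuine technical point is justifying the Bochner integrability of $x \mapsto \lambda_A(f(x)) \lambda_G(x) g$ as a function $G \to L^1(G, A, \alpha, \sigma)$, and of its push-forward under $\tau_\mathcal{R}$, which in turn legitimizes the interchange in the third equality above. The cleanest route will be to first verify the identity for $f \in C_c(G, A, \alpha, \sigma)$, where compact support and continuity of $f$ together with the strong continuity of $x \mapsto \lambda_G^\mathcal{R}(x)$ (established just before the proposition) render all integrands continuous with compact support, and then extend to arbitrary $f \in L^1(G,A,\alpha, \sigma)$ via the inductive-limit density of Remark \ref{Facts:F_R}\eqref{ILT->F} together with the bound $\|((\lambda_A^\mathcal{R}, \rho_A^\mathcal{R}) \rtimes (\lambda_G^\mathcal{R}, \rho_G^\mathcal{R}))(f)\| \leq \|f\|_1$.
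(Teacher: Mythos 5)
Your proposal is correct and follows essentially the same route as the paper: both reduce everything to the single identity $((\lambda_A^\mathcal{R}, \rho_A^\mathcal{R}) \rtimes (\lambda_G^\mathcal{R}, \rho_G^\mathcal{R}))(f) = \iota_{F_{\mathcal{R}}(G,A,\alpha,\sigma)}(\tau_\mathcal{R}(f))$, verified on the dense set $\tau_\mathcal{R}(L^1(G,A,\alpha,\sigma))$, and then deduce density and the norm equality from the isometry of $\iota$ and the relation $\|\tau_\mathcal{R}(f)\|_\mathcal{R} = \|f\|_\mathcal{R}$. The only difference is that you spell out the ``direct computation'' the paper leaves implicit and flag the Bochner-integrability point, which is a harmless (and welcome) elaboration.
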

\begin{proof}
Since $\tau_\mathcal{R}((L^1(G,A, \alpha, \sigma))$ is dense in $F^{\mathcal{R}}(G,A,\alpha, \sigma))$, for the density claim it suffices to show that if $f \in L^1(G,A, \alpha, \sigma)$, then
\[
((\lambda_A^\mathcal{R}, \rho_A^\mathcal{R})  \rtimes (\lambda_G^\mathcal{R}, \rho_G^\mathcal{R}))f \in (\iota_{F^{\mathcal{R}}(G,A,\alpha, \sigma)} \circ \tau_\mathcal{R})(L^1(G,A, \alpha, \sigma)).
\]  
A direct computation shows that for any $g \in  L^1(G,A, \alpha, \sigma)$ we have 
\[
((\lambda_A^\mathcal{R}, \rho_A^\mathcal{R})  \rtimes (\lambda_G^\mathcal{R}, \rho_G^\mathcal{R}))f)\tau_\mathcal{R}(g) = \tau_\mathcal{R}(f *_{\alpha, \sigma} g) = \iota_{F^{\mathcal{R}}(G,A,\alpha, \sigma)}( \tau_\mathcal{R}(f))\tau_\mathcal{R}(g).
\]
This gives at once that $((\lambda_A^\mathcal{R}, \rho_A^\mathcal{R})  \rtimes (\lambda_G^\mathcal{R}, \rho_G^\mathcal{R}))f = (\iota_{F^{\mathcal{R}}(G,A,\alpha, \sigma)} \circ \tau_\mathcal{R})f$ on all 
of $F^{\mathcal{R}}(G,A,\alpha, \sigma)$, so the desired density follows. Finally, since both $\iota_{F^{\mathcal{R}}(G,A,\alpha, \sigma)}$ and $\tau_\mathcal{R}$ are isometric, we get 
\[
\| ((\lambda_A^\mathcal{R}, \rho_A^\mathcal{R})  \rtimes (\lambda_G^\mathcal{R}, \rho_G^\mathcal{R}))f\|=\| f \|_{\mathcal{R}},
\]
as wanted. 
\end{proof}

\begin{definition}
We say that a covariant representation $(\pi,u)$ of $(G, A, \alpha, \sigma)$ on a Banach space $E$ is 
\textit{$\mathcal{R}$-continuous} if there is $M>0$ such that 
\[
\| (\pi \rtimes u)f \| \leq M \| f \|_{\mathcal{R}}
\]
for all $f \in L^1(G,A,\alpha, \sigma)$.
\end{definition}

\begin{remark}
 Observe that the above definition implies that 
\begin{enumerate}
\item if $(\pi, u) \in \mathcal{R}$, then $(\pi,u)$ is trivially $\mathcal{R}$-continuous 
with $M=1$;
\item if $(\pi, u)$ is an $\mathcal{R}$-continuous covariant representation on a Banach space $E$, 
then the map $\pi \rtimes u$ extends to $(\pi \rtimes u)^\mathcal{R} \colon F_{\mathcal{R}}(G,A,\alpha, \sigma) \to \Li(E)$ satisfying 
\begin{align*}
(\pi \rtimes u)^\mathcal{R} \circ \tau_{\mathcal{R}} & = \pi \rtimes u,
& \| (\pi \rtimes u)^\mathcal{R} \| & \leq M.
\end{align*}
\end{enumerate}
\end{remark}

We are now ready to state our correspondence theorem, which can be compared with the untwisted left version given in Theorem 2.1 of \cite{CPBAII_13}. 

\begin{theorem}\label{Thm:Corresp_THM}
Let $(G,A,\alpha, \sigma)$ be a twisted Banach algebra dynamical system. The map $(\pi , u) \mapsto (\pi \rtimes u)^\mathcal{R}$ is a bijection from the space of $\mathcal{R}$-continuous covariant representation of $(G, A, \alpha, \sigma)$ on Banach spaces to the space of nondegenerate representations of $F_{\mathcal{R}}(G,A,\alpha, \sigma)$ on Banach spaces. More precisely, if $(\pi, u)$ is  $\mathcal{R}$-continuous, then
\begin{align*}
\pi &= \widehat{(\pi \rtimes u)^{\mathcal{\mathcal{R}}}} \circ (\lambda^{\mathcal{R}}_A, \rho^{\mathcal{R}}_A) \\
 u &=   \widehat{(\pi \rtimes u)^{\mathcal{\mathcal{R}}}}  \circ (\lambda^{\mathcal{R}}_G, \rho^{\mathcal{R}}_G).
\end{align*}
Conversely, if $\Phi $ is a nondegenerate representation of $F_{\mathcal{R}}(G,A,\alpha, \sigma)$ on
the Banach space $E$, 
$\Pi \coloneqq \widehat{\Phi}  \circ (\lambda^{\mathcal{R}}_A, \rho^{\mathcal{R}}_A)$, and 
$U \coloneqq \widehat{\Phi}  \circ (\lambda^{\mathcal{R}}_G, \rho^{\mathcal{R}}_G)$, 
then $(\Pi,U)$ is an $\mathcal{R}$-continuous covariant representation of $(G, A, \alpha, \sigma)$ on $E$ 
and $\Phi = (\Pi \rtimes U)^{\mathcal{R}}$. 
\end{theorem}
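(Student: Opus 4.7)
The plan is to use Corollary \ref{1-1corresp} as the engine at the $L^1$-level and then transfer across the completion. The forward map $(\pi, u) \mapsto (\pi \rtimes u)^{\mathcal{R}}$ is almost definitional once $\mathcal{R}$-continuity is assumed. For the reverse direction, I will exploit the fact that $((\lambda_A^{\mathcal{R}}, \rho_A^{\mathcal{R}}), (\lambda_G^{\mathcal{R}}, \rho_G^{\mathcal{R}}))$ is already a covariant representation of $(G, A, \alpha, \sigma)$ on $F_{\mathcal{R}}(G, A, \alpha, \sigma)$, so composing its pieces with the unital extension $\widehat{\Phi}$ will produce the desired covariant pair, and Proposition \ref{Universal_IntMap_Isom} will identify $\Phi$ with its integrated form.

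\textbf{Forward direction.} Given an $\mathcal{R}$-continuous $(\pi, u)$ on $E$, the extension $(\pi \rtimes u)^{\mathcal{R}} \colon F_{\mathcal{R}}(G, A, \alpha, \sigma) \to \Li(E)$ exists and is bounded by the defining norm inequality. Its nondegeneracy transfers from the nondegeneracy of $\pi \rtimes u$ on $L^1$ (Corollary \ref{1-1corresp}) thanks to the density of $\tau_{\mathcal{R}}(L^1(G, A, \alpha, \sigma))$ in $F_{\mathcal{R}}(G, A, \alpha, \sigma)$ and the existence of a cai from Theorem \ref{F_R:BAI}. For the recovery formulas, Lemmas \ref{propAtoM(L^1)}(2) and \ref{propGtoM(L^1)}(2) give $(\pi \rtimes u)(\lambda_A(a) f) = \pi(a)(\pi \rtimes u)(f)$ and $(\pi \rtimes u)(\lambda_G(y) f) = u_y (\pi \rtimes u)(f)$. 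Pushing these identities through $\tau_{\mathcal{R}}$ to $F_{\mathcal{R}}(G, A, \alpha, \sigma)$ and then invoking the defining property of the unital extension of a nondegenerate representation to the multiplier algebra yields at once $\pi = \widehat{(\pi \rtimes u)^{\mathcal{R}}} \circ (\lambda_A^{\mathcal{R}}, \rho_A^{\mathcal{R}})$ and $u = \widehat{(\pi \rtimes u)^{\mathcal{R}}} \circ (\lambda_G^{\mathcal{R}}, \rho_G^{\mathcal{R}})$.

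\textbf{Reverse direction and closing the loop.} For a nondegenerate $\Phi$, I define $\Pi$ and $U$ as in the statement. Since $((\lambda_A^{\mathcal{R}}, \rho_A^{\mathcal{R}}), (\lambda_G^{\mathcal{R}}, \rho_G^{\mathcal{R}}))$ satisfies every clause of Definition \ref{CovariantRep} on $F_{\mathcal{R}}(G, A, \alpha, \sigma)$ (by the proposition immediately preceding Proposition \ref{Universal_IntMap_Isom}), and $\widehat{\Phi}$ is a unital algebra homomorphism $M(F_{\mathcal{R}}(G, A, \alpha, \sigma)) \to \Li(E)$, each algebraic identity transfers verbatim to $(\Pi, U)$. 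Nondegeneracy of $\Pi$ comes from combining nondegeneracy of $(\lambda_A^{\mathcal{R}}, \rho_A^{\mathcal{R}})$ with that of $\Phi$; strong continuity of $U$ reduces, via a uniform bound and density of $\Phi(F_{\mathcal{R}}(G, A, \alpha, \sigma))E$, to checking continuity on vectors $\Phi(c)\xi$, where it follows directly from strong continuity of $(\lambda_G^{\mathcal{R}}, \rho_G^{\mathcal{R}})$ on $F_{\mathcal{R}}(G, A, \alpha, \sigma)$. To identify $\Phi$ with $(\Pi \rtimes U)^{\mathcal{R}}$, I would compute on the dense set $\tau_{\mathcal{R}}(L^1(G, A, \alpha, \sigma))$ by commuting the bounded linear map $\widehat{\Phi}$ with the Bochner integral and applying Proposition \ref{Universal_IntMap_Isom}:
\[
(\Pi \rtimes U)(f) = \widehat{\Phi}\!\left(\int_G (\lambda_A^{\mathcal{R}}, \rho_A^{\mathcal{R}})(f(x))\,(\lambda_G^{\mathcal{R}}, \rho_G^{\mathcal{R}})(x)\, dx\right) = \widehat{\Phi}\!\left(\iota_{F_{\mathcal{R}}(G, A, \alpha, \sigma)}(\tau_{\mathcal{R}}(f))\right) = \Phi(\tau_{\mathcal{R}}(f)).
\]
This also delivers the $\mathcal{R}$-continuity bound $M \leq \|\Phi\|$ for $(\Pi, U)$. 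The other composition returns $(\pi, u)$ by the recovery formulas from the forward step.

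\textbf{Main obstacle.} The technically delicate point is ensuring that $\widehat{\Phi}$ interacts correctly with both the canonical pair $((\lambda_A^{\mathcal{R}}, \rho_A^{\mathcal{R}}), (\lambda_G^{\mathcal{R}}, \rho_G^{\mathcal{R}}))$ and the Bochner integral used in $\pi \rtimes u$. The crucial computation is the one supplied by Proposition \ref{Universal_IntMap_Isom}, which says that integration of the canonical pair against $f$ reproduces $\iota_{F_{\mathcal{R}}(G, A, \alpha, \sigma)}(\tau_{\mathcal{R}}(f))$; once this is in hand the rest of the correspondence is essentially an exercise in pushing algebraic relations through the unital extension $\widehat{\Phi}$.
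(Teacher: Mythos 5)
Your proposal is correct and follows essentially the same route as the paper: the paper's proof simply invokes the key identities $(\pi\rtimes u)(\lambda_A(a)f)=\pi(a)(\pi\rtimes u)(f)$, $(\pi\rtimes u)(\rho_A(a)f)=(\pi\rtimes u)(f)\pi(a)$ and their $G$-counterparts from Lemmas \ref{propAtoM(L^1)} and \ref{propGtoM(L^1)} and then defers to the untwisted arguments of \cite{CPBAII_13, CPBAI_11}, which is precisely the argument you have written out (together with Proposition \ref{Universal_IntMap_Isom} for identifying $\Phi$ with $(\Pi\rtimes U)^{\mathcal{R}}$). The only cosmetic remark is that the recovery of the full double centralizers also uses the right-hand identities (parts \eqref{ARightC2} and \eqref{GRightC2}), which the paper cites alongside the left-hand ones you mention.
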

\begin{proof}
Given the equalities \eqref{ALeftC2}, \eqref{ARightC2} from Lemma \ref{propAtoM(L^1)}, 
and the  equalities \eqref{GLeftC2}, \eqref{GRightC2} from Lemma \ref{propGtoM(L^1)}, 
the proof will go exactly as in the untwisted case (see \cite[Theorem 2.1]{CPBAII_13} and  \cite[Proposition 7.1]{CPBAI_11} for the details). 
\end{proof}

The universal property for the twisted Banach algebra crossed product is the contents of the following theorem.
\begin{theorem}\label{Universal_R_Prop}
Let $(G, A, \alpha, \sigma)$ be a Banach algebra twisted dynamical system, where $A$ has a cai, let $\mathcal{R}$ be a class of contractive covariant representations of $(G, A,\alpha, \sigma)$, 
and let $B$ be a Banach algebra with a cai that is nondegenerately representable. Suppose there are 
maps
$k_A \colon A \to M(B)$ and $k_G \colon G \to M(B)$ satisfying 
\begin{enumerate}
\item  $(k_A, k_G)$ is a covariant representation of 
 $(G, A, \alpha, \sigma)$ on $B$, \label{c_o_v}
 \item $(k_A \rtimes k_G )(L^1(G,A, \alpha, \sigma))$ is a dense subset of $\iota_B(B) \subseteq M(B)$, \label{dens}
  \item $\|(k_A \rtimes k_G) f \|=\|f\|_{\mathcal{R}}$ for all $f \in L^1(G,A, \alpha, \sigma)$.\label{kISOM}
\end{enumerate}
Then there is a unique isometric Banach algebra isomorphism $\Psi \colon F_{\mathcal{R}}(G, A, \alpha, \sigma) \to B$ such that $\widehat{\Psi} \colon M(F_{\mathcal{R}}(G, A, \alpha, \sigma)) \to M(B)$ satisfies 
\begin{align*}
k_A = \widehat{\Psi} \circ (\lambda_A^{\mathcal{R}}, \rho_A^{\mathcal{R}})\\
k_G = \widehat{\Psi} \circ (\lambda_G^{\mathcal{R}}, \rho_G^{\mathcal{R}}).
\end{align*}
\end{theorem}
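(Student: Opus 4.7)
The natural strategy is to build $\Psi$ directly from the integrated representation $k_A \rtimes k_G$: condition (3) makes it factor through the $\| \cdot \|_\mathcal{R}$-seminorm, and condition (2) provides surjectivity onto $\iota_B(B)$. My plan is first to define $\Psi$ on the dense subalgebra $\tau_\mathcal{R}(L^1(G,A,\alpha,\sigma))$ of $F_\mathcal{R}(G,A,\alpha,\sigma)$ by
\[
\Psi(\tau_\mathcal{R}(f)) \coloneqq \iota_B^{-1}\bigl((k_A \rtimes k_G)(f)\bigr),
\]
which makes sense thanks to condition (2). Condition (3) forces the kernels of $\tau_\mathcal{R}$ and $k_A \rtimes k_G$ to coincide on $L^1(G,A,\alpha,\sigma)$, so $\Psi$ is well-defined and isometric on this dense subalgebra. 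Multiplicativity is inherited from $k_A \rtimes k_G$ being a homomorphism (by Corollary \ref{1-1corresp}) together with the fact that $\tau_\mathcal{R}$ is a homomorphism. Extending by continuity then produces an isometric algebra homomorphism $\Psi\colon F_\mathcal{R}(G,A,\alpha,\sigma) \to B$; by condition (2) its image is dense and, being closed by isometry, equals all of $B$.

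Since both $F_\mathcal{R}(G,A,\alpha,\sigma)$ (by Theorem \ref{F_R:BAI}) and $B$ have cais, $\Psi$ lifts uniquely to an isometric isomorphism $\widehat{\Psi}\colon M(F_\mathcal{R}(G,A,\alpha,\sigma)) \to M(B)$, given on double centralizers by conjugation with $\Psi$. To verify the intertwining identities, I would test both sides as multipliers of $B$ against the dense subset $\Psi(\tau_\mathcal{R}(L^1(G,A,\alpha,\sigma))) \subseteq B$. Concretely, the computation behind Lemma \ref{propAtoM(L^1)}(\ref{ALeftC2}) applied to $k_A \rtimes k_G$ gives
\[
k_A(a)(k_A \rtimes k_G)(f) = (k_A \rtimes k_G)(\lambda_A(a)f),
\]
while on the $F_\mathcal{R}$-side the construction of $\lambda^\mathcal{R}_A$ yields $\lambda^\mathcal{R}_A(a)\tau_\mathcal{R}(f) = \tau_\mathcal{R}(\lambda_A(a)f)$. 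Applying $\Psi$ to the second identity and transporting the first through $\iota_B^{-1}$ shows that $k_A(a)$ and $\widehat{\Psi}((\lambda^\mathcal{R}_A,\rho^\mathcal{R}_A)(a))$ act identically on $\Psi(\tau_\mathcal{R}(L^1(G,A,\alpha,\sigma)))$; by density and continuity they coincide as multipliers. The analogous argument based on Lemma \ref{propGtoM(L^1)}(\ref{GLeftC2}) then proves $k_G = \widehat{\Psi}\circ (\lambda^\mathcal{R}_G,\rho^\mathcal{R}_G)$.

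For uniqueness, if $\Psi'$ is another such isometric isomorphism, then $\widehat{\Psi'}$ and $\widehat{\Psi}$ agree on the images of $A$ and $G$ inside $M(F_\mathcal{R}(G,A,\alpha,\sigma))$. A Bochner-integration argument combined with multiplicativity of the $\widehat{\Psi}$'s forces them to agree on every element of the form $((\lambda^\mathcal{R}_A,\rho^\mathcal{R}_A) \rtimes (\lambda^\mathcal{R}_G,\rho^\mathcal{R}_G))(f)$ with $f \in L^1(G,A,\alpha,\sigma)$; by Proposition \ref{Universal_IntMap_Isom} this set is dense in $\iota_{F_\mathcal{R}(G,A,\alpha,\sigma)}(F_\mathcal{R}(G,A,\alpha,\sigma))$, so continuity forces $\widehat{\Psi'} = \widehat{\Psi}$ on this canonical copy of $F_\mathcal{R}(G,A,\alpha,\sigma)$, giving $\Psi' = \Psi$. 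The main obstacle I anticipate is the careful bookkeeping around $\widehat{\Psi}$: one must verify that it transports the $\lambda$- and $\rho$-components of the multipliers through $\Psi$ coherently, and that the unital extensions of nondegenerate representations interact properly with the isometric embeddings $A \hookrightarrow M(A)$ and $B \hookrightarrow M(B)$ used implicitly throughout the computation.
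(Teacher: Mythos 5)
Your proposal is correct and follows essentially the same route as the paper: define $\Psi$ on $\tau_\mathcal{R}(L^1(G,A,\alpha,\sigma))$ as $\iota_B^{-1}\circ(k_A\rtimes k_G)$, use condition (3) for well-definedness and isometry, condition (2) for surjectivity, and then identify $k_A$, $k_G$ through the multiplier extension. The only cosmetic difference is that you verify the intertwining identities by hand via Lemmas \ref{propAtoM(L^1)} and \ref{propGtoM(L^1)}, whereas the paper simply invokes the correspondence Theorem \ref{Thm:Corresp_THM}, which packages exactly that computation.
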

\begin{proof}
Assumptions \eqref{c_o_v} and \eqref{kISOM} give that $(k_A, k_G)$ is $\mathcal{R}$-continuous, 
whence we get a map $(k_A \rtimes k_G )^{\mathcal{R}} \colon F_{\mathcal{R}}(G,A,\alpha, \sigma) \to \Li(B)$. 
Define $\Psi_0 \colon \tau_{\mathcal{R}}(L^1(G,A, \alpha, \sigma)) \to B$ by
\[
\Psi_0 \coloneqq \iota_B^{-1} \circ (k_A \rtimes k_G )^{\mathcal{R}},
\]
where $\iota_B^{-1}$ is the inverse of the isometry $\iota_B \colon B \to \iota_B(B)$. 
Assumption \eqref{kISOM} implies that $\| \Psi_0 c \|_B = \|c\|$ for all $c \in \tau_{\mathcal{R}}(L^1(G,A, \alpha, \sigma)) $ and therefore $\Psi_0$ extends to an isometry $\Psi \colon F_{\mathcal{R}}(G, A, \alpha, \sigma) \to B$. Furthermore, Assumption \eqref{dens} implies that $\Psi_0$ has dense range and therefore 
$\Psi$ is in fact surjective. The equalities for $k_A$ and $k_G$ now follow from Theorem \ref{Thm:Corresp_THM}. 
\end{proof}

\section{Twisted $L^p$-operator crossed products}\label{S:T_LP_CP}

In this section, following \cite{ncp2012AC}, we restrict to second countable locally compact groups. 
This in turn allows us to apply measure theory results such as the Tonelli-Fubini Theorem, Hölder's inequality, and the Lebesgue Dominated Convergence Theorem to the Lebesgue-Bochner spaces $L^p(\mu; E)$ from Definition \ref{Lebegue-Bochner}.

Below, we let $A$ be a nondegenerate $L^p$-operator algebra with a cai and 
 $(G, A, \alpha, \sigma)$ be a twisted Banach algebra dynamical system. We refer to  $(G, A, \alpha, \sigma)$ 
 as an $L^p$-twisted dynamical system. We start by defining $F^p(G, A, \alpha, \sigma)$, the full $L^p$-twisted crossed product:

\begin{definition}\label{defn_FullLpCP}
Let $p \in [1,\infty)$ and let $\mathcal{R}^p=\mathcal{R}^p(G, A, \alpha, \sigma)$ be all the 
covariant, $\sigma$-finite, and contractive representations of $(G, A, \alpha, \sigma)$ on $L^p$-spaces. 
The \textit{full $L^p$-twisted crossed product} is 
\[
F^p(G, A, \alpha, \sigma) \coloneqq F_{\mathcal{R}^p}(G, A, \alpha, \sigma).
\]
\end{definition}
Notice that $F^p(G, A, \alpha, \sigma)$ is guaranteed to exist as long as $A$ is a nondegenerate $L^p$-operator algebra with a cai,
so that $\mathcal{R}^p \neq \varnothing$. 

\begin{remark}\label{Rmk_twist_1_ncp}
When $\sigma=\mathbf{1}$ is the trivial twist, then $F^p(G, A, \alpha, \mathbf{1})$
coincides with $F^p(G, A, \alpha)$ from \cite[Definition 3.3]{ncp2013CP}. 
\end{remark}

Moreover, for each $L^p$-twisted dynamical system we will also get a reduced version of the twisted crossed product: $F_{\op{r}}^p(G, A, \alpha, \sigma)$. 
For this we first need to 
show that any contractive nondegenerate representation $\pi_0$ of $A$ on an $L^p$-space 
induces a covariant representations of $(G, A, \alpha, \sigma)$. Indeed, 
let $\pi_0 \colon A \to \Li(L^p(\Omega,\mu))$ be a contractive and nondegenerate representation. 
Let 
\begin{equation}\label{1-tensor_p}
E \coloneqq  L^p(G, \nu_G) \otimes_p L^p(\Omega, \mu) \cong L^p(G; L^p(\Omega,\mu)) \cong L^p(G \times \Omega, \nu_G \times \mu).
\end{equation}
For any $a \in A$, $\xi \in C_c(G, L^p(\Omega, \mu)) \subseteq E$, and $x \in G$, we define $\pi \colon A \to \Li(E)$ so that 
\begin{equation}\label{2_pi}
(\pi(a)\xi)(x) \coloneqq \pi_0(\alpha_x^{-1}(a))(\xi(x)) \in L^p(\Omega, \mu),
\end{equation}
Next, for any $x,y \in G$ and $\xi \in C_c(G, L^p(\Omega, \mu)) \subseteq E$, 
we define $u  \colon G \to \Li(E)$ so that
\begin{equation}\label{3_v}
(u_y\xi)(x) \coloneqq \widehat{\pi_0}(\alpha_x^{-1}(\sigma_{y,y^{-1}x}))(\xi(y^{-1}x))\in L^p(\Omega, \mu).
\end{equation}
Next we prove that 
$(\pi, u)$ is indeed covariant according to Definition \ref{CovariantRep}. 
\begin{proposition}\label{IndReducedCovRepn}
Let $(G, A, \alpha, \sigma)$ be an $L^p$-twisted Banach algebra dynamical system, 
let $p \in [1,\infty)$, let $\pi_0 \colon A \to \Li(L^p(\Omega, \mu))$ 
be a contractive and nondegenerate representation, and let 
$E$, $\pi$, $u$ be as in Equations \eqref{1-tensor_p}, \eqref{2_pi}, and \eqref{3_v}. Then the pair $(\pi, v)$
is a contractive covariant representation of  $(G, A, \alpha, \sigma)$ on $E$.
\end{proposition}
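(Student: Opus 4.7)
The plan is to verify the four conditions of Definition \ref{CovariantRep} directly from the formulas \eqref{2_pi} and \eqref{3_v}, with contractivity and invertibility emerging along the way. First I would establish that $\pi$ is a nondegenerate contractive representation on $E$: for $\xi \in C_c(G, L^p(\Omega,\mu)) \subseteq E$, a pointwise $L^p$-bound combining contractivity of $\pi_0$ and isometricity of $\alpha_x^{-1}$ yields $\|\pi(a)\xi\|_p \leq \|a\|\|\xi\|_p$, multiplicativity is pointwise, and nondegeneracy follows from nondegeneracy of $\pi_0$ applied pointwise together with a cai of $A$ and density of $C_c(G, L^p(\Omega,\mu))$ in $E$. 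Similarly, since $\sigma_{y,y^{-1}x}$ and its inverse both lie in $\mathrm{Inv}_1(M(A))$, the multiplier $\widehat{\pi_0}(\alpha_x^{-1}(\sigma_{y,y^{-1}x}))$ acts as a pointwise isometry on $L^p(\Omega,\mu)$; combined with left invariance of Haar measure via the substitution $z = y^{-1}x$, this gives $\|u_y\xi\|_p = \|\xi\|_p$, so $u_y$ extends to an isometry on $E$.

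For the cocycle and covariance identities I would first record the explicit formula for the canonical extension to $M(A)$, namely
\[
(\widehat{\pi}(m)\xi)(x) = \widehat{\pi_0}(\alpha_x^{-1}(m))\xi(x), \qquad m \in M(A),
\]
which is easily checked to define a contractive nondegenerate extension of $\pi$, so must equal $\widehat{\pi}$ by uniqueness. With this in hand, Condition \eqref{CovC3} reduces, after computing both sides, to the $M(A)$-identity
\[
\sigma_{x,x^{-1}z}\cdot \alpha_z\bigl(\alpha_{x^{-1}z}^{-1}(\sigma_{y,y^{-1}x^{-1}z})\bigr) = \sigma_{x,y}\sigma_{xy,y^{-1}x^{-1}z}.
\]
Condition \eqref{x,yAd} applied to the pair $(x, x^{-1}z)$ gives $\alpha_z \circ \alpha_{x^{-1}z}^{-1} = \mathrm{Ad}(\sigma_{x,x^{-1}z}^{-1}) \circ \alpha_x$, and plugging this in reduces the identity to $\alpha_x(\sigma_{y,y^{-1}x^{-1}z})\sigma_{x,x^{-1}z} = \sigma_{x,y}\sigma_{xy,y^{-1}x^{-1}z}$, which is precisely Condition \eqref{ASzxy} applied to the triple $(x, y, y^{-1}x^{-1}z)$. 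The normalization $\sigma_{1_G,x} = \mathrm{id}_{M(A)}$ immediately gives $u_{1_G} = \mathrm{id}_E$, so the cocycle relation and invertibility of $\sigma_{x,x^{-1}}, \sigma_{x^{-1},x}$ in $M(A)$ produce two-sided inverses of $u_x$, placing each $u_x$ in $\mathrm{Iso}(E)$.

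Condition \eqref{CovC4} is a shorter computation of the same type: both $(u_x\pi(a)\xi)(z)$ and $(\pi(\alpha_x(a))u_x\xi)(z)$ reduce to expressions of the form $\widehat{\pi_0}(\cdot)\xi(x^{-1}z)$, and after invoking the same rewrite $\alpha_z \circ \alpha_{x^{-1}z}^{-1} = \mathrm{Ad}(\sigma_{x,x^{-1}z}^{-1}) \circ \alpha_x$, equality collapses to the trivial identity $\sigma_{x,x^{-1}z}\sigma_{x,x^{-1}z}^{-1}\alpha_x(a)\sigma_{x,x^{-1}z} = \alpha_x(a)\sigma_{x,x^{-1}z}$. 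Finally, strong continuity of $u$ is obtained by first verifying it on $\xi \in C_c(G, L^p(\Omega,\mu))$ using joint strict continuity of $\sigma$, strong continuity of $\alpha$, and uniform continuity of $\xi$ via a Lebesgue dominated convergence argument in the Bochner $L^p$-space, and then extending to all of $E$ using the uniform bound $\|u_y\| = 1$.

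The main obstacle is the cocycle identity \eqref{CovC3}: it is the unique step that simultaneously requires the $M(A)$-formula for $\widehat{\pi}$, the multiplicative cocycle relation \eqref{x,yAd} for $\alpha$ (needed to rewrite the composition $\alpha_z \circ \alpha_{x^{-1}z}^{-1}$ in a form comparable to the target), and the nonabelian cocycle identity \eqref{ASzxy} for $\sigma$; choosing the right substitution of variables in \eqref{ASzxy} is what makes everything line up. Once this reduction is in hand, all other verifications are routine.
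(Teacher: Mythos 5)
Your proposal is correct and follows essentially the same route as the paper: both verify the conditions of Definition \ref{CovariantRep} on the dense subspace $C_c(G, L^p(\Omega,\mu))$, with the cocycle identity \eqref{CovC3} reduced to a combination of Conditions \eqref{x,yAd} and \eqref{ASzxy} via exactly the substitution you describe. The only differences are cosmetic: the paper exhibits an explicit inverse $v_y$ for $u_y$ (Equation \eqref{u_yINV}) instead of deriving invertibility from the cocycle relation, cites \cite[Lemma 2.11]{ncp2013CP} for the properties of $\pi$, and does not spell out the strong continuity of $u$, which you rightly note requires a dominated convergence argument.
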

\begin{proof}
It follows from \cite[Lemma 2.11]{ncp2013CP} that $\pi$ is indeed a contractive 
nondegenerate representation of $A$ on $E$.
It remains to check Conditions \eqref{CovC2}, \eqref{CovC3}, and \eqref{CovC4} in Definition \ref{CovariantRep}.  
For Condition \eqref{CovC2}, we first check that $u_y \in \op{Iso}(E)$ for any $y \in G$. 
Indeed, fix $y \in G$,
and using that $\pi_0$ is contractive, $\alpha$ isometric, and $\sigma_{x,y}$ has norm 1, 
we obtain, for any $\xi \in C_c(G, L^p(\Omega, \mu)) \subseteq E$,
\[
\| u_y \xi \|^p = \int_G \|  (u_y\xi)(x) \|_p^p dx \leq \int_G \| \xi(y^{-1}x)\|_p^p = \| \xi\|^p,
\]
 proving that $u_y$ is contractive. Next, for $\xi \in C_c(G, L^p(\Omega, \mu))$ and $x \in G$, 
 define $v \colon G \to \Li(E)$ by letting
\begin{equation}\label{u_yINV}
(v_y\xi)(x) \coloneqq \widehat{\pi_0}(\alpha^{-1}_{yx}(\sigma_{y,x}^{-1}))(\xi(yx))\in L^p(\Omega, \mu).
\end{equation}
An analogous computation 
as the one we did for $u_y$ shows that $v_y$ is also
contractive. Further a direct computation shows that, for any $\xi \in C_c(G, L^p(\Omega, \mu))$, 
\[
v_yu_y\xi = u_yv_y\xi = \xi.
\]
Thus, $u_y^{-1}=v_y$, whence $u_y$ is in fact an isometry and therefore $u_y$ is indeed in $\op{Iso}(E)$, so Condition \eqref{CovC2} now follows. 

For Condition \eqref{CovC3}, it suffices to verify it on elements of the dense space $C_c(G, L^p(\Omega, \mu))$. 
Take $x, y \in G$ and $\xi \in C_c(G, L^p(\Omega, \mu))$, and we compute, for any $z \in G$, 
\begin{align*}
(u_xu_y \xi)(z) & = \widehat{\pi_0}(\alpha_z^{-1}(\sigma_{x,x^{-1}z}))((u_y \xi)(x^{-1}z))\\
& = \widehat{\pi_0}(\alpha_z^{-1}(\sigma_{x,x^{-1}z})) \widehat{\pi_0}(\alpha^{-1}_{x^{-1}z}(\sigma_{y,y^{-1}x^{-1}z}))(\xi(y^{-1}x^{-1}z))\\
& = \widehat{\pi_0}(\alpha_z^{-1}(\sigma_{x,x^{-1}z})\alpha^{-1}_{x^{-1}z}(\sigma_{y,y^{-1}x^{-1}z}))(\xi(y^{-1}x^{-1}z)).
\end{align*}
For convenience, set for a moment $T \coloneqq \alpha^{-1}_{x^{-1}z}(\sigma_{y,y^{-1}x^{-1}z}) \in M(A)$.
Using Condition \eqref{x,yAd} in Definition \ref{TwistedBanachSystem} in the second step, 
then the definition of $T$ in the third step, and finally Condition \eqref{ASzxy} in Definition \ref{TwistedBanachSystem} in the fourth step, we get 
\begin{align*}
\alpha_z^{-1}(\sigma_{x,x^{-1}z})T& = \alpha_{z}^{-1}(\sigma_{x,x^{-1}z} \alpha_z(T))\\
&  = \alpha_{z}^{-1}(\alpha_x(\alpha_{x^{-1}z}(T))\sigma_{x,x^{-1}z} )\\
& =  \alpha_{z}^{-1}(\alpha_x(\sigma_{y,y^{-1}x^{-1}z})\sigma_{x,x^{-1}z} )\\
& =  \alpha_{z}^{-1}(\sigma_{x,y}\sigma_{xy, y^{-1}x^{-1}z} ).
\end{align*}
Therefore, going back to the initial computation for $(u_xu_y \xi)(z)$,
we now have 
\begin{align*}
(u_xu_y \xi)(z) & = \widehat{\pi_0}(\alpha_z^{-1}(\sigma_{x,x^{-1}z})T )(\xi(y^{-1}x^{-1}z)) \\
 & =  \widehat{\pi_0}( \alpha_{z}^{-1}(\sigma_{x,y}\sigma_{xy, y^{-1}x^{-1}z} ) )(\xi(y^{-1}x^{-1}z)). \\
 & = \widehat{\pi_0}( \alpha_{z}^{-1}(\sigma_{x,y}) )\widehat{\pi_0}( \alpha_{z}^{-1}(\sigma_{xy, y^{-1}x^{-1}z} ))(\xi(y^{-1}x^{-1}z))\\
  & = \widehat{\pi_0}( \alpha_{z}^{-1}(\sigma_{x,y}) )(u_{xy}\xi (z))\\
  & = (\widehat{\pi}(\sigma_{x,y})u_{xy}\xi)(z).
\end{align*}
 Condition \eqref{CovC3} now follows. Finally, to show Condition \eqref{CovC4}, 
 we also compute using elements in $C_c(G, L^p(\Omega, \mu))$. 
Indeed, for $x, y \in G$ $a \in A$ and $\xi \in C_c(G, L^p(\Omega, \mu))$, 
recalling that $u_x^{-1}=v_x$ (as in Equation \eqref{u_yINV})
in the first step and using Condition \eqref{x,yAd} from Definition \ref{TwistedBanachSystem}
in the fifth step, we get
 \begin{align*}
(u_x \pi(a) u_x^{-1} \xi)(y) & =  \widehat{\pi}_0(\alpha_y^{-1}(\sigma_{x,x^{-1}y}))(\pi(a)v_x\xi)(x^{-1}y)\\
 & =   \widehat{\pi}_0(\alpha_y^{-1}(\sigma_{x,x^{-1}y}))\pi_0(\alpha_{x^{-1}y}^{-1}(a))(v_x\xi(x^{-1}y))\\
 & =  \widehat{\pi}_0(\alpha_y^{-1}(\sigma_{x,x^{-1}y}))\pi_0(\alpha_{x^{-1}y}^{-1}(a))\widehat{\pi}_0(\alpha_y^{-1}(\sigma_{x,x^{-1}y}^{-1}))\xi(y)\\
  & =  \pi_0(\alpha_y^{-1}( 
  \sigma_{x,x^{-1}y} \alpha_z(\alpha_{x^{-1}y}^{-1}(a) )\sigma_{x,x^{-1}y}^{-1} 
  ))\xi(y)\\
    & =  \pi_0(\alpha_y^{-1}( 
\alpha_x(\alpha_{x^{-1}y}(\alpha_{x^{-1}y}^{-1}(a) ))
  ))\xi(y)\\  
      & =  \pi_0(\alpha_y^{-1}( 
\alpha_x(a)
  ))\xi(y)\\  
  & = (\pi(\alpha_x(a))\xi)(y),
 \end{align*}
 whence  Condition \eqref{CovC4} in now established and we are done. 
\end{proof}

\begin{definition}
Let $p \in [1, \infty)$ and let $\mathcal{R}_{\op{r}}^{p} \subseteq \mathcal{R}^p$
denote the set of covariant representations of $(G, A, \alpha, \sigma)$ induced by nondegenerate, $\sigma$-finite, and contractive representations of $A$ on $L^p$-spaces as in Proposition 
\ref{IndReducedCovRepn}.
The \textit{reduced $L^p$-twisted crossed product} is defined by
\[
F_{\op{r}}^p(G, A, \alpha, \sigma) \coloneqq F_{\mathcal{R}_{\op{r}}^{p}}(G, A, \alpha, \sigma).
\]
\end{definition}

\begin{remark}
When $A=\C$ and $G$ acts trivially on $\C$, then  $F_{\op{r}}^{p}(G, \C, \mathbf{1}, \sigma)$ is simply the algebra $F_\lambda^p(G, \sigma)$ from \cite[Definition 3.4]{HetOrt23}.  When $\sigma=\mathbf{1}$ is the trivial twist, then $F_{\op{r}}^p(G, A, \alpha, \mathbf{1})$
coincides with $F_{\op{r}}^p(G, A, \alpha)$ from \cite[Definition 3.3]{ncp2013CP}.
\end{remark}

\section{Packer-Raeburn trick: $p$-analogue}\label{S:P_R_T}

The following is a Banach algebra version of {\cite[Definition 3.1]{PacRae89}}.
\begin{definition}
Let $G$ be a locally compact group and let 
$A$ be a nondegenerate Banach algebra with a cai. We say that two twisted actions $(\alpha, \sigma)$ and $(\beta, \omega)$ of $G$ on $A$ 
are \textit{exterior equivalent} if there is a map
$\theta \colon G \to \op{Inv}_1(M(A))$ such that 
\begin{enumerate}
\item $\theta$ is strictly continuous, that is both $x \mapsto \theta_xa$ and 
$x \mapsto a\theta_x$ are continuous maps $G \to A$ for all $a \in A$, 
\item $\beta_x = \op{Ad}(\theta_x)\circ \alpha_x$ for all $x \in G$, 
\item $\omega_{x,y}\theta_{xy}=\theta_x\alpha_x(\theta_y)\sigma_{x,y}$.
\end{enumerate}
We often write $(\alpha, \sigma) \overset{\theta}{\sim} (\beta, \omega)$ when 
the twisted actions are equivalent via $\theta$.
\end{definition}

Fix two twisted dynamical systems $(G,A, \alpha, \sigma)$ and $(G,A, \beta, \omega)$ 
with $(\alpha, \sigma)$ and $(\beta, \omega)$ exterior equivalent with equivalence implemented by $\theta \colon G \to \op{Inv}_1(M(A))$. Let $\mathcal{R}$ be a nonempty family of contractive covariant representation of 
$(G, A, \alpha, \sigma)$. For each $(\pi, u) \in \mathcal{R}$ acting on a Banach space $E$ we define $v(\pi, u) \colon G \to \op{Iso}(E)$ by putting, for each $x \in G$, 
\[
v(\pi, u)_x \coloneqq v(\pi, u)x \coloneqq \widehat{\pi}(\theta_x)u_x.
\] 
We then let
\[
\mathcal{R}_\theta \coloneqq \{ (\pi, v(\pi, u)) \colon (\pi, u) \in \mathcal{R}\}.
\]
\begin{lemma}\label{cov+ISOM}
Let $(\alpha, \sigma) \overset{\theta}{\sim} (\beta, \omega)$.
Then,  for each $(\pi, u) \in \mathcal{R}$, the pair $(\pi, v(\pi, u)) \in \mathcal{R}_\theta$ is a contractive covariant representation of $(G,A, \beta, \omega)$. Furthermore, for each $f \in L^1(G,A,\alpha, \sigma)$, 
if $f^{(\theta)}(x)\coloneqq f(x)\theta_x^{-1}$ for each $x \in G$, then 
\[
\| f^{(\theta)} \|_{\mathcal{R}_\theta} = \| f \|_{\mathcal{R}}.
\]
\end{lemma}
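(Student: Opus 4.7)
The plan is to verify directly the four conditions of Definition~\ref{CovariantRep} for $(\pi,v(\pi,u))$ with respect to $(\beta,\omega)$, then obtain the norm identity by showing that $(\pi\rtimes v(\pi,u))(f^{(\theta)})$ collapses to $(\pi\rtimes u)(f)$. Fix $(\pi,u)\in\mathcal{R}$ on a Banach space $E$ and write $v_x\coloneqq \widehat{\pi}(\theta_x)u_x$. Nondegeneracy of $\pi$ is unchanged. Since $\theta_x,\theta_x^{-1}\in\op{Inv}_1(M(A))$ and $\widehat{\pi}$ is unital and contractive (as $\pi$ is contractive), $\widehat{\pi}(\theta_x)$ is an invertible isometry with inverse $\widehat{\pi}(\theta_x^{-1})$, so $v_x\in\op{Iso}(E)$. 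For strong continuity at $x_0\in G$, I would decompose
\[
v_x\xi-v_{x_0}\xi \;=\; \widehat{\pi}(\theta_x)(u_x\xi-u_{x_0}\xi) \;+\; \bigl(\widehat{\pi}(\theta_x)-\widehat{\pi}(\theta_{x_0})\bigr)u_{x_0}\xi,
\]
bounding the first term by $\|u_x\xi-u_{x_0}\xi\|$ using contractivity, and handling the second on vectors of the form $\pi(a)\zeta$ via $\widehat{\pi}(\theta_x)\pi(a)\zeta=\pi(\theta_x a)\zeta$ together with strict continuity of $\theta$, then extending to all of $E$ by density of $\pi(A)E$ and the uniform bound $\|\widehat{\pi}(\theta_x)\|\le 1$.

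For the cocycle and adjoint covariance conditions, the main tool is the identity $u_x\widehat{\pi}(t)=\widehat{\pi}(\alpha_x(t))u_x$ for every $t\in M(A)$, which I would obtain from the analogous identity on $A$ (Condition~\eqref{CovC4} for $(\pi,u)$) by extending to multipliers via nondegeneracy of $\pi$. Using this identity, Condition~\eqref{CovC3} for $(\pi,u)$, and the exterior equivalence relation $\omega_{x,y}\theta_{xy}=\theta_x\alpha_x(\theta_y)\sigma_{x,y}$, I compute
\[
v_xv_y = \widehat{\pi}(\theta_x)\widehat{\pi}(\alpha_x(\theta_y))u_xu_y = \widehat{\pi}\bigl(\theta_x\alpha_x(\theta_y)\sigma_{x,y}\bigr)u_{xy} = \widehat{\pi}(\omega_{x,y})v_{xy}.
\]
The adjoint covariance follows in one line from $\beta_x=\op{Ad}(\theta_x)\circ\alpha_x$:
\[
\pi(\beta_x(a)) = \widehat{\pi}(\theta_x)\pi(\alpha_x(a))\widehat{\pi}(\theta_x^{-1}) = \widehat{\pi}(\theta_x)u_x\pi(a)u_x^{-1}\widehat{\pi}(\theta_x)^{-1} = v_x\pi(a)v_x^{-1}.
\]

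For the norm identity, the bound $\|\theta_x^{-1}\|=1$ yields $\|f^{(\theta)}(x)\|\le\|f(x)\|$, so $f^{(\theta)}\in L^1(G,A,\beta,\omega)$ once measurability is established from the Bochner measurability of $f$ together with strict continuity of $\theta$ on a separable subalgebra containing the essential image of $f$. A direct substitution then gives
\[
(\pi\rtimes v(\pi,u))(f^{(\theta)}) = \int_G \widehat{\pi}(f(x)\theta_x^{-1})\widehat{\pi}(\theta_x)u_x\,dx = \int_G \pi(f(x))u_x\,dx = (\pi\rtimes u)(f),
\]
and since $(\pi,u)\mapsto(\pi,v(\pi,u))$ is by construction a bijection $\mathcal{R}\to\mathcal{R}_\theta$, taking the supremum over $\mathcal{R}_\theta$ on the left and $\mathcal{R}$ on the right produces $\|f^{(\theta)}\|_{\mathcal{R}_\theta}=\|f\|_{\mathcal{R}}$. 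The step I expect to require the most care is the strong continuity of $v$, since $\theta$ is only \emph{strictly} (not norm) continuous; the approximation argument genuinely uses nondegeneracy of $\pi$ and the uniform contractivity of $\widehat{\pi}(\theta_x)$ to pass from the dense subspace $\pi(A)E$ to all of $E$.
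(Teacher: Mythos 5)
Your proof is correct and follows essentially the same route as the paper: verify the covariance conditions for $(\pi,v)$ using the extension $u_x\widehat{\pi}(t)u_x^{-1}=\widehat{\pi}(\alpha_x(t))$ of Condition \eqref{CovC4} to $M(A)$ together with the exterior-equivalence cocycle relation, and then collapse $(\pi\rtimes v)(f^{(\theta)})$ to $(\pi\rtimes u)(f)$ before taking suprema. The only differences are cosmetic (you compute $v_xv_y$ directly where the paper computes $v_xv_yv_{xy}^{-1}$) and that you supply the strong-continuity and measurability details that the paper leaves as ``clearly continuous.''
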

\begin{proof}
For notational 
convenience, for the rest of the proof we write for short $v \coloneqq v(\pi,u)$ and we let $E$ 
be the Banach space on which both $\pi$ and $u$ are acting. The map $x \mapsto v_x\coloneqq  \widehat{\pi}(\theta_x)u_x$  is clearly continuous 
for each $x \in G$ and satisfies $\| v_x \xi\| \leq \| \xi \|$ for any $\xi \in E$. Next, for each $x \in G$, we define $t_x \in \Li(E)$ by $t_x\coloneqq u_x^{-1}\widehat{\pi}(\theta_x^{-1})$. Straightforward calculations show that $\| t_x \xi \| \leq \|\xi\|$ and that $t_xv_x=v_xt_x = \op{id}_E$, whence $v_x \in \op{Iso}(E)$ with $v_x^{-1}=t_x$. For any $x, y \in G$, below 
we compute using Condition \eqref{CovC3} from Definition \ref{CovariantRep} for $(\pi, u)$
in the second step, that $\theta$ implements the exterior equivalence 
in the third step, and Condition \eqref{CovC4} from Definition \ref{CovariantRep} for $(\pi, u)$
in the fifth step
\begin{align*}
v_x v_y   v_{xy}^{-1}& = \widehat{\pi}(\theta_x)u_x\widehat{\pi}(\theta_y)u_y t_{xy} \\
& =  \widehat{\pi}(\theta_x)u_x\widehat{\pi}(\theta_y)u_x^{-1}\widehat{\pi}(\sigma_{x,y} \theta_{xy}^{-1})\\
& = \widehat{\pi}(\theta_x)u_x\widehat{\pi}(\theta_y)u_x^{-1}\widehat{\pi}(\alpha_x(\theta_y^{-1})\theta_x^{-1}\omega_{x,y}) \\
& = \widehat{\pi}(\theta_x)u_x\widehat{\pi}(\theta_y)u_x^{-1}\widehat{\pi}(\alpha_x(\theta_y^{-1}))\widehat{\pi}(\theta_x^{-1})\widehat{\pi}(\omega_{x,y}) \\
& = \widehat{\pi}(\theta_x)u_x\widehat{\pi}(\theta_y)u_x^{-1}
u_x\widehat{\pi}(\theta_y^{-1})u_x^{-1}\widehat{\pi}(\theta_x^{-1})\widehat{\pi}(\omega_{x,y}) \\
& = \widehat{\pi}(\omega_{x,y}).
\end{align*}
This proves Condition \eqref{CovC3} for $(\pi, v)$. 
Next, for any $x \in G$ and $a \in A$ we use that $\theta$ implements the equivalence in the first step and 
Condition \eqref{CovC4} from Definition \ref{CovariantRep} for $(\pi, u)$ in the second step 
to obtain 
\[
\pi(\beta_x(a))  = \pi(\theta_x\alpha_x(a)\theta_x^{-1})  =  
\widehat{\pi}(\theta_x)u_x \pi(a) u_x^{-1}\widehat{\pi}(\theta_x^{-1})
= v_x \pi(a) v_x^{-1},
\]
proving Condition \eqref{CovC4} for $(\pi, v)$. Finally, observe that 
for each $v=v(\pi,u) \in \mathcal{R}_\theta$
\[
(\pi \rtimes v)(f^{(\theta)}) = \int_G \pi(f(x)\theta_{x}^{-1})\widehat{\pi}(\theta_x)u_x dx = \int_G \pi(f(x))u_x dx = (\pi \rtimes u)(f),
\]
whence 
\begin{align*}
\| f^{(\theta)} \|_{\mathcal{R_\theta}} &= 
\sup \{ \| (\pi \rtimes v)(f^{(\theta)}) \| \colon (\pi, v) \in \mathcal{R}_\theta\} \\
&= 
\sup \{ \| (\pi \rtimes u)(f) \| \colon (\pi, u) \in \mathcal{R}\} = \|f\|_{\mathcal{R}}
\end{align*}
as wanted. 
\end{proof}
The following result is a general version of \cite[Lemma 3.3]{PacRae89}. 
\begin{theorem}\label{Ext_Equiv_ISOM}  
Let $G$ be a locally compact group, let 
$A$ be a nondegenerate Banach algebra with a cai, and let $(\alpha, \sigma)$ and $(\beta, \omega)$ be two exterior equivalent twisted actions of $G$ on $A$ via $\theta$. Then the crossed product $F_{\mathcal{R}}(G, A, \alpha, \sigma)$ is isometrically isomorphic 
to $F_{\mathcal{R_\theta}}(G, A, \beta, \omega)$.
\end{theorem}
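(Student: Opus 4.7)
The plan is to construct an explicit isometric isomorphism at the $L^1$-level and then extend by continuity using Lemma \ref{cov+ISOM}.

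Define $\Phi \colon L^1(G, A, \alpha, \sigma) \to L^1(G, A, \beta, \omega)$ by
\[
\Phi(f)(x) \coloneqq f(x)\theta_x^{-1} = f^{(\theta)}(x).
\]
First I would check the basic facts: $x \mapsto \theta_x^{-1}$ is strictly continuous (an elementary argument from strict continuity of $\theta$ and the fact that the values are in $\op{Inv}_1(M(A))$), so $\Phi(f)$ is measurable; moreover $\|f(x)\theta_x^{-1}\| = \|f(x)\|$ since $\theta_x^{-1}$ is an isometry, so $\Phi$ is $L^1$-isometric with two-sided inverse $\Psi(g)(x) \coloneqq g(x)\theta_x$.

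The key step is verifying that $\Phi$ is multiplicative, i.e.\ $\Phi(f *_{\alpha, \sigma} g) = \Phi(f) *_{\beta, \omega} \Phi(g)$. Expanding the right-hand side and using $\beta_y = \op{Ad}(\theta_y)\circ \alpha_y$ gives
\[
(\Phi(f) *_{\beta, \omega} \Phi(g))(x) = \int_G f(y)\alpha_y(g(y^{-1}x))\alpha_y(\theta_{y^{-1}x}^{-1})\theta_y^{-1}\omega_{y,y^{-1}x}\, dy,
\]
while the left-hand side equals $\int_G f(y)\alpha_y(g(y^{-1}x))\sigma_{y,y^{-1}x}\theta_x^{-1}\, dy$. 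Setting $z = y^{-1}x$, equality of the two integrals reduces exactly to the cocycle identity
$\theta_y\alpha_y(\theta_z)\sigma_{y,z} = \omega_{y,z}\theta_{yz}$, which is the third axiom of exterior equivalence. This is the main computational obstacle, but it is dispatched once one unwinds the definitions and notices that all three conditions defining exterior equivalence are consumed in this single identity.

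With $\Phi$ being an $L^1$-isometric algebra isomorphism in hand, Lemma \ref{cov+ISOM} states precisely that $\|\Phi(f)\|_{\mathcal{R}_\theta} = \|f\|_\mathcal{R}$ for all $f \in L^1(G,A,\alpha,\sigma)$. Applying the same argument to $(\beta,\omega) \overset{\theta^{-1}}{\sim} (\alpha,\sigma)$ (a short verification shows the relation is symmetric and $(\mathcal{R}_\theta)_{\theta^{-1}} = \mathcal{R}$ via $v(\pi, v(\pi,u)) = u$) yields the reverse equality for $\Psi$. Consequently $\Phi$ descends to an isometric algebra homomorphism on the quotients $L^1/\ker\|-\|_\mathcal{R}$ and, since both $\tau_\mathcal{R}(L^1(G,A,\alpha,\sigma))$ and $\tau_{\mathcal{R}_\theta}(L^1(G,A,\beta,\omega))$ are dense in their respective completions, $\Phi$ extends uniquely to an isometric Banach algebra isomorphism
\[
\widetilde{\Phi} \colon F_{\mathcal{R}}(G,A,\alpha,\sigma) \to F_{\mathcal{R}_\theta}(G,A,\beta,\omega),
\]
with inverse induced by $\Psi$. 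This completes the proof.
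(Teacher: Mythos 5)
Your proof is correct, but it takes a genuinely different route from the paper. You build the isomorphism by hand: the map $\Phi(f)=f^{(\theta)}$ is shown directly to be a multiplicative $L^1$-isometry from $L^1(G,A,\alpha,\sigma)$ onto $L^1(G,A,\beta,\omega)$ (your cocycle computation $\sigma_{y,z}\theta_{yz}^{-1}=\alpha_y(\theta_z^{-1})\theta_y^{-1}\omega_{y,z}$ is exactly right), and then Lemma \ref{cov+ISOM} transports the $\mathcal{R}$-seminorm to the $\mathcal{R}_\theta$-seminorm so that $\Phi$ descends to the Hausdorff completions. The paper instead routes everything through the universal property (Theorem \ref{Universal_R_Prop}): it takes $B=F_{\mathcal{R}_\theta}(G,A,\beta,\omega)$ with $k_A=(\lambda_A^{\mathcal{R}_\theta},\rho_A^{\mathcal{R}_\theta})$ and $k_G(x)=\widehat{k_A}(\theta_x^{-1})(\lambda_G^{\mathcal{R}_\theta},\rho_G^{\mathcal{R}_\theta})(x)$, verifies covariance, and observes that $(k_A\rtimes k_G)(f)$ equals the canonical integrated form applied to $f^{(\theta)}$ — so the same formula $f\mapsto f^{(\theta)}$ and the same Lemma \ref{cov+ISOM} are the engine in both arguments. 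What the paper's route buys is that the isomorphism automatically comes with the identities $k_A=\widehat{\Psi}\circ(\lambda_A^{\mathcal{R}},\rho_A^{\mathcal{R}})$ and $k_G=\widehat{\Psi}\circ(\lambda_G^{\mathcal{R}},\rho_G^{\mathcal{R}})$, i.e., compatibility with the canonical multiplier maps, and it replaces your convolution computation by the (essentially equivalent) verification that $(k_A,k_G)$ is covariant. What your route buys is self-containedness: you never need $M(F_{\mathcal{R}})$ or the hypothesis $C_{\mathcal{R}}\le 1$ that Theorem \ref{Universal_R_Prop} requires, so your argument would in principle extend to any uniformly bounded class $\mathcal{R}$. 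Two small points to tighten: (i) the "elementary argument" that $x\mapsto a\theta_x^{-1}$ is continuous (hence that $f^{(\theta)}$ is measurable) deserves a real proof in the Banach setting, since unlike the C*-case one cannot pass to adjoints — though the paper's own use of $f^{(\theta)}$ in Lemma \ref{cov+ISOM} and Equation \eqref{k_aK_g} relies on the same fact; and (ii) your symmetric argument with $\theta^{-1}$ is not strictly needed for surjectivity, since $\Phi$ is already onto $L^1(G,A,\beta,\omega)$ and $\tau_{\mathcal{R}_\theta}$ of that space is dense in the completion, but it does exhibit the inverse cleanly.
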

\begin{proof}
Let $B \coloneqq F_{\mathcal{R_\theta}}(G, A, \beta, \omega)$, define $k_A \colon A \to M(B)$ by $k_A \coloneqq (\lambda_A^{\mathcal{R_\theta}}, \rho_A^{\mathcal{R}_\theta})$, and define $k_G \colon G \to M(B)$
so that for each $x \in G$ we let
\[
k_G(x) \coloneqq \widehat{k_A}(\theta_x^{-1})(\lambda_G^{\mathcal{R_\theta}}, \rho_G^{\mathcal{R}_\theta})(x).
\]
 Analogous computations to the ones used in Lemma \ref{cov+ISOM}
show that $(k_A, k_G)$ is a covariant representation for $(G,A, \alpha, \sigma)$. 
Furthermore, a direct computation gives 
that for any $f \in L^1(G,A,\alpha, \sigma)$
\begin{equation}\label{k_aK_g}
(k_A \rtimes k_G)(f) =  ((\lambda_A^\mathcal{R_\theta}, \rho_A^\mathcal{R_\theta})  \rtimes (\lambda_G^\mathcal{R_\theta}, \rho_G^\mathcal{R_\theta}))(f^{(\theta)}), 
\end{equation}
where $f^{(\theta)}$ is as in the statement of Lemma \ref{cov+ISOM}. 
Thus, combining Equation \eqref{k_aK_g} with Proposition \ref{Universal_IntMap_Isom} and Lemma \ref{cov+ISOM} we get 
\[
\| (k_A \rtimes k_G)(f) \| = \| f^{(\theta)}\|_{\mathcal{R_\theta}} = \| f\|_{\mathcal{R}}.
\]
Furthermore, since for each $x \in G$, $\theta_x$ is invertible, 
it also follows from \eqref{k_aK_g} that
\[
(\iota_B \circ \tau_{R_\theta})(L^1(G,A,\beta, \omega)) \subseteq (k_A \rtimes k_G)(L^1(G,A,\alpha, \sigma)).
\]
Therefore $(k_A \rtimes k_G)(L^1(G,A,\alpha, \sigma))$ is  dense in $\iota_B(B)$. 
Hence  $B$ together with $(k_A, k_G)$ satisfy the universal conditions in Theorem \ref{Universal_R_Prop} 
proving that
$B$ is indeed isometrically isomorphic to $F_\mathcal{R}(G, A, \alpha, \sigma)$. 
\end{proof}

The full version (i.e., the Banach algebra generalization of \cite[Lemma 3.3]{PacRae89})
follows at once from the previous result. Indeed, if $\mathcal{C}=\mathcal{C}(G,A,\alpha, \sigma)$ consists of all the contractive representations of a twisted dynamical system $(G, A, \alpha, \sigma)$, 
we write $G \ltimes_{\alpha, \sigma} A \coloneqq F_{\mathcal{C}}(G,A,\alpha, \sigma)$. 
We get: 
\begin{corollary}
Let $G$ be a locally compact group, let 
$A$ be a nondegenerate Banach algebra with a cai, and let $(\alpha, \sigma) \overset{\theta}{\sim} (\beta, \omega)$ be two exterior equivalent twisted actions of $G$ on $A$. Then
$G \ltimes_{\alpha, \sigma} A$ is isometrically isomorphic to $G \ltimes_{\beta, \omega} A$.
\end{corollary}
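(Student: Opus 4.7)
The plan is to apply Theorem \ref{Ext_Equiv_ISOM} with $\mathcal{R}$ taken to be the maximal class $\mathcal{C} \coloneqq \mathcal{C}(G,A,\alpha,\sigma)$ of all contractive covariant representations of $(G,A,\alpha,\sigma)$. Since $G \ltimes_{\alpha,\sigma} A = F_{\mathcal{C}}(G,A,\alpha,\sigma)$ by definition, Theorem \ref{Ext_Equiv_ISOM} immediately yields an isometric isomorphism $G \ltimes_{\alpha,\sigma} A \cong F_{\mathcal{C}_\theta}(G,A,\beta,\omega)$, and the entire task collapses to showing the equality of classes $\mathcal{C}_\theta = \mathcal{C}(G,A,\beta,\omega)$, because that equality turns the target into $G \ltimes_{\beta,\omega} A$.

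The inclusion $\mathcal{C}_\theta \subseteq \mathcal{C}(G,A,\beta,\omega)$ is literally the first half of Lemma \ref{cov+ISOM}. For the reverse inclusion I would exploit the symmetry of exterior equivalence: first I would verify that $(\alpha,\sigma) \overset{\theta}{\sim} (\beta,\omega)$ forces $(\beta,\omega) \overset{\theta'}{\sim} (\alpha,\sigma)$, where $\theta'_x \coloneqq \theta_x^{-1}$. The strict continuity of $\theta'$ is immediate from that of $\theta$; the identity $\alpha_x = \op{Ad}(\theta'_x)\circ \beta_x$ follows from $\beta_x = \op{Ad}(\theta_x)\circ\alpha_x$ by composing with $\op{Ad}(\theta_x^{-1})$; and the cocycle identity $\sigma_{x,y}\theta'_{xy} = \theta'_x \beta_x(\theta'_y)\omega_{x,y}$ is, after multiplying through by $\theta_x$ on the left and using $\beta_x = \op{Ad}(\theta_x)\circ\alpha_x$, an algebraic rearrangement of the original cocycle identity $\omega_{x,y}\theta_{xy}=\theta_x\alpha_x(\theta_y)\sigma_{x,y}$.

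Granting this symmetry, for any $(\pi,v) \in \mathcal{C}(G,A,\beta,\omega)$ I would define $u_x \coloneqq \widehat{\pi}(\theta_x^{-1})v_x$ and apply Lemma \ref{cov+ISOM} with the roles of $(\alpha,\sigma)$ and $(\beta,\omega)$ swapped and $\theta$ replaced by $\theta'$; this shows that $(\pi,u)$ is a contractive covariant representation of $(G,A,\alpha,\sigma)$. By construction $v(\pi,u)_x = \widehat{\pi}(\theta_x)u_x = v_x$, so $(\pi,v) = (\pi, v(\pi,u)) \in \mathcal{C}_\theta$. This settles the reverse inclusion and concludes the proof. The only potential obstacle is the symmetry verification for exterior equivalence, which is a short algebraic manipulation; all the analytic content has been absorbed into Theorem \ref{Ext_Equiv_ISOM} and Lemma \ref{cov+ISOM}.
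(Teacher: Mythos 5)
Your proposal is correct and follows essentially the same route as the paper, which simply observes that $\mathcal{C}(G,A,\alpha,\sigma)_\theta = \mathcal{C}(G,A,\beta,\omega)$ and invokes Theorem \ref{Ext_Equiv_ISOM}. You merely spell out the detail the paper declares ``clear,'' namely the reverse inclusion via the symmetry $(\beta,\omega)\overset{\theta^{-1}}{\sim}(\alpha,\sigma)$, and that verification (including the strict continuity of $x\mapsto\theta_x^{-1}$, which uses $\|\theta_x^{-1}\|=1$) goes through exactly as you describe.
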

\begin{proof}\label{FullC*-case}
It is clear that $\mathcal{C}(G,A,\alpha, \sigma)_\theta = \mathcal{C}(G,A,\beta, \omega)$, whence the desired result follows at once from the previous theorem. 
\end{proof}

The same proof yields a $p$-version 
for the full twisted crossed product of $L^p$-operator algebras:

\begin{corollary}\label{Cor:LpExtEquiv}
Let $p \in [1, \infty)$ and let $A$ be a nondegenerate $L^p$-operator algebra with a cai. If $(\alpha, \sigma)$ and $(\beta, \omega)$ are exterior equivalent twisted actions of a locally compact second countable group $G$ on $A$, then $F^p(G, A, \alpha, \sigma)$ is isometrically isomorphic to $F^p(G, A, \beta, \omega)$. 
\end{corollary}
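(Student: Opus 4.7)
My plan is to reduce the claim to the general Theorem \ref{Ext_Equiv_ISOM} by specializing to $\mathcal{R} := \mathcal{R}^p(G, A, \alpha, \sigma)$. That theorem then immediately yields
\[
F^p(G, A, \alpha, \sigma) = F_{\mathcal{R}}(G, A, \alpha, \sigma) \cong F_{\mathcal{R}_\theta}(G, A, \beta, \omega),
\]
so the whole problem collapses to establishing the set-level equality
\[
\mathcal{R}^p(G, A, \alpha, \sigma)_\theta = \mathcal{R}^p(G, A, \beta, \omega),
\]
after which the right-hand side becomes $F^p(G, A, \beta, \omega)$ by definition.

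For the forward inclusion, I would take $(\pi, u) \in \mathcal{R}^p(G, A, \alpha, \sigma)$ acting on some $L^p(\Omega, \mu)$ with $\mu$ $\sigma$-finite and $\pi$ contractive, and simply invoke Lemma \ref{cov+ISOM}: $(\pi, v(\pi, u))$ is then a contractive covariant representation of $(G, A, \beta, \omega)$ on the very same Banach space, so it still qualifies as an element of $\mathcal{R}^p(G, A, \beta, \omega)$.

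The reverse inclusion is where the actual work lies. I would first show that exterior equivalence is symmetric, so that $(\alpha, \sigma) \overset{\theta}{\sim} (\beta, \omega)$ implies $(\beta, \omega) \overset{\theta^{-1}}{\sim} (\alpha, \sigma)$ with $(\theta^{-1})_x := \theta_x^{-1}$. The two algebraic conditions are immediate rearrangements of the exterior-equivalence relations; the genuinely delicate point, which I expect to be the main obstacle, is strict continuity of $x \mapsto \theta_x^{-1}$. This I would handle via the telescoping identity
\[
(\theta_x^{-1} - \theta_{x_0}^{-1}) a = \theta_x^{-1} \bigl( (\theta_{x_0} - \theta_x) \theta_{x_0}^{-1} a \bigr),
\]
using that $\theta_{x_0}^{-1} a \in A$ and $\|\theta_x^{-1}\| = 1$ to transfer strict continuity from $\theta$ to $\theta^{-1}$. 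Once symmetry is available, given $(\pi', u') \in \mathcal{R}^p(G, A, \beta, \omega)$ I would set $u_x := \widehat{\pi'}(\theta_x^{-1}) u'_x$ and apply Lemma \ref{cov+ISOM} to the reversed equivalence to land $(\pi', u) \in \mathcal{R}^p(G, A, \alpha, \sigma)$; a one-line check gives $v(\pi', u) = u'$, certifying $(\pi', u') \in \mathcal{R}_\theta$ and closing the argument.
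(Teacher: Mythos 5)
Your proposal is correct and follows essentially the same route as the paper: the paper proves the full-crossed-product case by noting that $\mathcal{C}(G,A,\alpha,\sigma)_\theta = \mathcal{C}(G,A,\beta,\omega)$ and invoking Theorem \ref{Ext_Equiv_ISOM}, then states that ``the same proof'' gives Corollary \ref{Cor:LpExtEquiv} via $\mathcal{R}^p(G,A,\alpha,\sigma)_\theta = \mathcal{R}^p(G,A,\beta,\omega)$. The only difference is that you spell out the reverse inclusion (symmetry of exterior equivalence via $\theta^{-1}$ and its strict continuity), which the paper dismisses as clear; your telescoping argument for that step is sound.
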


Let $p \in (1, \infty)$. We are now ready for our final application: a $p$ version of the Packer-Raeburn untwisting trick \cite[Theorem 3.4]{PacRae89}. In what follows, fix a twisted dynamical system $(G, A, \alpha, \sigma)$ where

\begin{itemize}
\item $G$ is a second countable locally compact group, 
\item $A$ a nondegenerate separably representable $L^p$-operator algebra with a cai.
\end{itemize}
For notational convenience (see Notation \ref{notaRep}) we think of $A$ as a norm-closed subalgebra 
of $\Li(L^p(\mu))$ with $L^p(\mu)$ separable. We also fix the following notation for compact operators acting on $L^p(G)$: 
\[
\KG \coloneqq \mathcal{K}(L^p(G)) \subseteq \Li(L^p(G)).
\]
It is well known that $\KG$ is an $L^p$-operator algebra acting nondegenerately on $L^p(G)$ and that it has a cai. 
Recall also from Definition \ref{def_stabilization} that the left $p$-stabilization of $A$ in this case is 
\[
\op{St}_{\op{l}}^p(A)  = \op{St}_{\op{l}, G}^p(A)\coloneqq \KG \otimes_{\op{lsp}}^p A,
\]
which is an $L^p$-operator algebra whose norm is independent of the representation of  $A$ chosen.
\begin{lemma}\label{lem_caiST}
Let $p \in (1, \infty)$, let $A$ be a nondegenerate separably representable $L^p$-operator algebra with a cai. 
Then $\op{St}_{\op{l}}^p(A)$ also has a cai. 
\end{lemma}
\begin{proof}
Let $(u_\gamma)_{\gamma \in \Gamma}$ be a cai for $\KG$, 
let $(e_\lambda)_{\lambda \in \Lambda}$ be a cai for $A$. 
For  any $\pi_A \in \op{Rep}_p(A)$, a direct check shows that  
$((\iota_\mathcal{K} \otimes \pi_A) (u_\gamma \otimes e_\lambda))_{\gamma \in \Gamma, \lambda \in \Lambda}$
is a cai for $\KG \otimes_p \pi_A(A)$, so it follows that $(u_\gamma \otimes e_\lambda)_{\gamma \in \Gamma, \lambda \in \Lambda}$ is a cai for $\op{St}_{\op{l}}^p(A)$. 
\end{proof}
For each $x \in G$, we define 
\[
( \op{id}_{\KG} \otimes \alpha )_x \coloneqq (\op{id}_{\KG} \otimes \alpha)(x) \coloneqq \op{id}_{\KG} \otimes  \alpha_x.
\]
Then, $\op{id}_{\KG} \otimes  \alpha \colon G \to \op{Aut}(\op{St}_{\op{l}}^p(A))$. Similarly, if $1_{\KG} \in M(\KG)=\mathcal{B}(L^p(G))$ denotes the identity multiplier, for 
each $x, y \in G$ we define 
\[
(  1_{\KG} \otimes \sigma)_{x,y} \coloneqq (1_{\KG} \otimes \sigma)(x,y) \coloneqq 1_{\KG} \otimes \sigma_{x,y} \in \mathcal{B}(L^p(G)) \otimes_\op{lsp}^p  M(A) \subseteq M(\op{St}_{\op{l}}^p(A)).
\]
Hence,  $ 1_{\KG} \otimes \sigma \colon G \times G \to \op{Inv}_1(M(\op{St}_{\op{l}}^p(A)))$.

\begin{proposition}\label{Prop:Stable}
The pair $( \op{id}_{\KG} \otimes \alpha,1_{\KG} \otimes  \sigma )$ defined above
is a twisted action of $G$ on $\op{St}_{\op{l}}^p(A)$. Moreover, the twisted 
crossed product $F^p(G,\op{St}_{\op{l}}^p(A) ,  \op{id}_{\KG} \otimes \alpha,1_{\KG} \otimes  \sigma)$
is isometrically isomorphic to $\op{St}_{\op{l}}^p(F^p(G,A,\alpha,\sigma) )$. 
\end{proposition}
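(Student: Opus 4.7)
Part (1) --- that $(\op{id}_{\KG} \otimes \alpha, 1_{\KG} \otimes \sigma)$ is a twisted action of $G$ on $\KG \otimes_p A$ --- is routine: each of the six axioms of Definition \ref{TwistedBanachSystem} is inherited from the corresponding axiom for $(\alpha, \sigma)$ applied to the second tensor factor, combined with the associativity and bilinearity of $\otimes_p$ from Theorem \ref{SpatialTP}. The strong continuity of $x \mapsto (\op{id}_{\KG} \otimes \alpha_x)(S)$ and the joint strict continuity of $(x,y) \mapsto 1_{\KG} \otimes \sigma_{x,y}$ extend from elementary tensors to all of $\KG \otimes_p A$ by a standard $\varepsilon/3$ argument together with the tensor-norm estimate $\|T \otimes a\| \leq \|T\|\|a\|$ of Theorem \ref{SpatialTP}\eqref{LpTensorOp}.

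For Part (2), my plan is to apply the universal property (Theorem \ref{Universal_R_Prop}) to $B := \KG \otimes_p F^p(G, A, \alpha, \sigma)$, which is a nondegenerate $L^p$-operator algebra with cai (as the spatial tensor of two such algebras). Set
\[
k(T \otimes a) := T \otimes (\lambda_A^{\mathcal{R}^p}, \rho_A^{\mathcal{R}^p})(a), \qquad v_x := 1_{\KG} \otimes (\lambda_G^{\mathcal{R}^p}, \rho_G^{\mathcal{R}^p})(x),
\]
both viewed in $M(B)$, extending $k$ from elementary tensors by continuity. Covariance of $(k, v)$ with respect to $(\op{id}_{\KG} \otimes \alpha, 1_{\KG} \otimes \sigma)$ follows tensor-factor by tensor-factor from the covariance identities for $((\lambda_A, \rho_A), (\lambda_G, \rho_G))$ established in Section \ref{S:U_P}, giving condition (1) of Theorem \ref{Universal_R_Prop}. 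For condition (2), a direct computation on the inductive-limit-dense subspace of $L^1(G, \KG \otimes_p A, \op{id}_{\KG} \otimes \alpha, 1_{\KG} \otimes \sigma)$ spanned by simple tensors $f(x) = \psi(x)(T \otimes a)$ yields
\[
(k \rtimes v)(f) = T \otimes \big( (\lambda_A^{\mathcal{R}^p}, \rho_A^{\mathcal{R}^p}) \rtimes (\lambda_G^{\mathcal{R}^p}, \rho_G^{\mathcal{R}^p}) \big)(\psi \cdot a),
\]
which by Proposition \ref{Universal_IntMap_Isom} lies in $\iota_B(B)$ and has norm-dense image.

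The main obstacle is condition (3), the isometry $\|(k \rtimes v)(f)\|_B = \|f\|_{\mathcal{R}^p}$ for all $f \in L^1$. The inequality $\leq$ follows by fixing a faithful nondegenerate representation $\Psi$ of $B$ on a $\sigma$-finite $L^p$-space (available under our separability assumptions, since $L^p(G)$ is separable and $F^p(G, A, \alpha, \sigma)$ is separably $L^p$-representable), whose composition with $(k, v)$ is a covariant representation in $\mathcal{R}^p$, so $\|(k \rtimes v)(f)\|_B = \|\Psi((k \rtimes v)(f))\| \leq \|f\|_{\mathcal{R}^p}$. The reverse inequality $\geq$ is the hard part and requires factoring an arbitrary $(\pi', u') \in \mathcal{R}^p$ on a $\sigma$-finite $L^p$-space $E$ through $(k, v)$. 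For this I intend to invoke the $L^p$-analogue of the classical uniqueness of nondegenerate representations of $\mathcal{K}$ on Hilbert space: composing the multiplier extension $\widehat{\pi'}$ with the canonical embedding $T \mapsto T \otimes 1_{M(A)}$ from $\KG$ into $M(\KG \otimes_p A)$ yields a nondegenerate contractive representation of $\KG$ on $E$, which up to isometric isomorphism must be an amplification. This produces a decomposition $E \cong L^p(G) \otimes_p F$ under which $\pi' = \iota_{\KG} \otimes \pi_0$ for a nondegenerate contractive representation $\pi_0$ of $A$ on $F$; the covariance conditions then force $u' = \op{id}_{L^p(G)} \otimes u_0$ for a covariant companion $(\pi_0, u_0)$ of $(G, A, \alpha, \sigma)$ on $F$. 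Combining this with Proposition \ref{Universal_IntMap_Isom} and Theorem \ref{SpatialTP}\eqref{LpTensorOp} gives the required bound, and Theorem \ref{Universal_R_Prop} then produces the claimed isometric isomorphism $F^p(G, \KG \otimes_p A, \op{id}_{\KG} \otimes \alpha, 1_{\KG} \otimes \sigma) \cong \KG \otimes_p F^p(G, A, \alpha, \sigma)$.
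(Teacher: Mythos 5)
Your proposal is correct and follows essentially the same route as the paper: both parts are handled identically, with the isomorphism in Part (2) obtained by feeding $B=\KG\otimes_p F^p(G,A,\alpha,\sigma)$ together with the pair $k_A = 1_{\KG}\otimes(\lambda_A^{\mathcal{R}^p},\rho_A^{\mathcal{R}^p})$, $k_G = 1_{\KG}\otimes(\lambda_G^{\mathcal{R}^p},\rho_G^{\mathcal{R}^p})$ into the universal property of Theorem \ref{Universal_R_Prop}. You actually supply more detail than the paper, which leaves conditions (1)--(3) as a direct check; just make sure to cite a precise reference for the amplification theorem for nondegenerate $\sigma$-finite contractive representations of $\KG$ on $L^p$-spaces that your verification of the isometry condition (3) relies on.
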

\begin{proof}
The first statement is a direct check that the conditions in Definition \ref{TwistedBanachSystem} are satisfied. For the second one, put $B\coloneqq \op{St}_{\op{l}}^p(F^p(G,A,\alpha,\sigma) )$ and let $\mathcal{R}^p = \mathcal{R}^p(G, A, \alpha, \sigma)$ be as in Definition \ref{defn_FullLpCP}. Theorem \ref{F_R:BAI} combined with Lemma \ref{lem_caiST} show that $B$ has a cai. The desired result now follows from the universal property (Theorem \ref{Universal_R_Prop}) with $k_A \coloneqq \op{id}_{\KG}  \otimes (\lambda_A^{\mathcal{R}^p}, \rho_A^{\mathcal{R}^p})$ and $k_G   \coloneqq 1_{\KG}  \otimes (\lambda_G^{\mathcal{R}^p}, \rho_G^{\mathcal{R}^p})$. 
\end{proof}

Let $p'$ be the Hölder conjugate of $p \in (1, \infty)$, that is $p' \coloneqq \frac{p}{p-1} \in (1, \infty)$. 
Denote by $(- \mid -) \colon L^{p'}(G) \times L^{p}(G) \to \C$ the usual duality pairing. For each $\varphi \in  L^{p}(G)$ and $\eta \in L^{p'}(G)$ we let $\Theta_{\varphi, \eta} \in \KG$ be the usual rank-1 map, where for any $\zeta \in L^p(G)$ and $x \in G$, we let
\begin{equation}\label{Theta_Map}
(\Theta_{\varphi, \eta} \zeta)(x) \coloneqq \varphi (x) (\eta \mid \zeta).
\end{equation}
Combining 
\cite[Example 4.5]{Ryan02} and \cite[Corollary 4.13]{Ryan02} we isometrically identify the Banach space injective tensor product 
$L^{p}(G) \otimes_{\varepsilon} L^{p'}(G)$ with $\KG$ via the map satisfying $\varphi \otimes \eta \mapsto \Theta_{\varphi, \eta}$. Furthermore, the first Proposition in \cite[Section 7.1]{defflor1993} states that the $\otimes_{p}$ norm dominates the injective one, which implies that 
the identity map on the algebraic tensor product $L^{p}(G) \otimes L^{p'}(G)$
extends to a contraction $L^{p}(G) \otimes_{p} L^{p'}(G) \to \KG$. 

We seek a generalization of the above contraction to the general $L^p$-operator algebra valued setting. We do so by considering the more general Banach space $L^{p}(G; L^{p'}(G;A)) = L^p(G) \otimes_p L^{p'}(G;A)$, 
whereas above we only dealt with the case $A=\C$. Observe that $L^{p}(G; L^{p'}(G,A))$ is readily
identified via 
\begin{equation}\label{eq_ident_xy}
\psi(x)y \leftrightarrow \psi(x,y)
\end{equation}
 with the space of 
measurable functions $\psi \colon G \times G \to A$ satisfying 
\begin{equation}\label{Eq:p,p'_norm}
\| \psi\|_{p,p'} \coloneqq \Bigg( \int_G \Bigg( \int_G \| \psi(x,y) \|_A^{p'} dy \Bigg)^{p/{p'}} dx \Bigg)^{1/{p}} <\infty.
\end{equation}
We remark that when $p=2$, under the identification in Equation \eqref{eq_ident_xy}, the space $L^2(G; L^{2}(G;A))$ coincides with $L^2(G \times G; A)$ and that $\|\psi\|_{2,2}=\| \psi\|_2$, so that the identification is isometric.

The following is a generalization, on the one hand, of \cite[Lemma 3.5]{PacRae89} (from $p=2$ to any $p\in (1,\infty)$), and on the other hand,
also a generalization of \cite[Exercise VI.3.7]{Con90}   (from $\C$-valued to $A$-valued). 

\begin{lemma}\label{K_psi_Lemma}
Let $p \in (1, \infty)$, let $p'=\frac{p}{p-1}$, let $\psi \in  L^{p}(G;L^{p'}(G;A))$, and let $\xi \in L^p(G; L^p(\mu)) = L^p(G) \otimes_p L^p(\mu) = L^p(\nu_G \times \mu)$. Then, the formula
\[
x \mapsto (K_\psi \xi )(x) \coloneqq \int_G \psi(x,y)\xi(y) dy
\]
defines a map $K_\psi \xi \colon G \to L^p(\mu)$ such that $K_\psi \in  \KG \otimes_p A \subseteq \Li(L^p(\nu_G \times \mu))$ and $\| K_\psi \| \leq \| \psi\|_{p,p'}$. Furthermore, the set 
$\{K_\psi \colon \psi \in L^{p}(G; L^{p'}(G;A))\}$ is dense in $\KG \otimes_p A$.
\end{lemma}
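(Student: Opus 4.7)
The plan is to prove three assertions in sequence: (i) $K_\psi$ is well-defined on $L^p(\nu_G \times \mu)$ with the norm bound $\|K_\psi\| \leq \|\psi\|_{p,p'}$, (ii) $K_\psi$ lies in $\KG \otimes_p A$, and (iii) the image $\{K_\psi\}$ is dense in $\KG \otimes_p A$.

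For (i), I would identify $\xi \in L^p(\nu_G \times \mu)$ with an element of $L^p(G; L^p(\mu))$ via the isometric isomorphism of Equation \eqref{LpTensor}. For almost every $x \in G$ the slice $\psi(x, \cdot)$ lies in $L^{p'}(G; A)$, and Hölder's inequality applied to $\|\psi(x,y)\|_A \|\xi(y)\|_{L^p(\mu)}$ yields the pointwise estimate
\[
\|(K_\psi \xi)(x)\|_{L^p(\mu)} \leq \bigl(\textstyle\int_G \|\psi(x,y)\|_A^{p'}\, dy\bigr)^{1/p'} \|\xi\|_p.
\]
Raising to the $p$-th power and integrating in $x$ gives $\|K_\psi \xi\|_p \leq \|\psi\|_{p,p'} \|\xi\|_p$. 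This simultaneously shows that $K_\psi \xi$ lies in $L^p(\nu_G \times \mu)$ and that $K_\psi$ extends to an element of $\Li(L^p(\nu_G \times \mu))$ with the claimed norm bound.

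For (ii), I would first evaluate $K_\psi$ when $\psi$ is an elementary tensor $\psi(x,y) = \varphi(x)\eta(y)a$ with $\varphi \in L^p(G)$, $\eta \in L^{p'}(G)$, $a \in A$. Applying $K_\psi$ to $\zeta \otimes v$ with $\zeta \in L^p(G)$, $v \in L^p(\mu)$, a direct calculation yields $K_\psi(\zeta \otimes v) = (\eta \mid \zeta)\, \varphi \otimes av$, so $K_\psi = \Theta_{\varphi, \eta} \otimes a \in \KG \otimes_p A$. By linearity this extends to the algebraic span of elementary tensors, which sits inside $\KG \otimes_p A$. A standard two-step approximation---first by simple functions with values in $L^{p'}(G; A)$, then approximating those coefficients by $A$-valued simple functions---shows that such elementary tensors are dense in $L^p(G; L^{p'}(G; A))$ with respect to $\|\cdot\|_{p,p'}$. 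Combined with the contractive estimate from (i), this exhibits $K_\psi$ as a norm limit of elements of $\KG \otimes_p A$, hence $K_\psi \in \KG \otimes_p A$. A brief continuity argument, passing to a subsequence so that the approximants converge a.e., confirms that this limit coincides with the integral formula defining $K_\psi$.

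Assertion (iii) follows immediately from (ii): the image $\{K_\psi\}$ contains every $\Theta_{\varphi, \eta} \otimes a$, and since the rank-one operators $\Theta_{\varphi, \eta}$ span a dense subspace of $\KG = \mathcal{K}(L^p(G))$, their tensor products with elements of $A$ span a dense subspace of $\KG \otimes_p A$ by definition of the spatial tensor product. The principal obstacle I foresee is not analytic depth but bookkeeping: juggling the identifications $L^p(\nu_G \times \mu) \cong L^p(G) \otimes_p L^p(\mu)$ and $L^p(G; L^{p'}(G; A))$, and making precise that the two a priori distinct descriptions of $K_\psi$---as an integral operator and as the continuous extension of $\varphi \otimes \eta \otimes a \mapsto \Theta_{\varphi,\eta} \otimes a$---really coincide.
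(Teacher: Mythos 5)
Your proposal is correct and follows essentially the same route as the paper's proof: Hölder's inequality for the bound $\|K_\psi\xi\|_p \le \|\psi\|_{p,p'}\|\xi\|_p$, the computation $K_{\varphi\otimes\eta\otimes a} = \Theta_{\varphi,\eta}\otimes a$, density of elementary tensors in $L^p(G;L^{p'}(G;A))$ to place $K_\psi$ in $\KG\otimes_p A$, and density of the span of the $\Theta_{\varphi_j,\eta_j}\otimes a_j$ for the final claim. The only (harmless) difference is that you apply Hölder directly to a general $\psi$ while the paper applies it to finite sums of pure tensors and then passes to the limit; your explicit remark about reconciling the norm-limit with the integral formula is a detail the paper leaves implicit.
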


\begin{proof}
We need to be more careful than the case $A=\C$ discussed above as we are now tensoring with a general $L^p$-operator algebra $A$. To do so, consider a pure tensor $\psi_0 = \varphi \otimes \eta  \otimes a \in L^{p}(G) \otimes L^{p'}(G) \otimes A$, so that, under the identification in Equation \eqref{eq_ident_xy}, the function $\psi_0 \colon G \times G \to A$ is given by 
\[
\psi_0(x,y) = \varphi(x)\eta(y)a \ \text{ for all $x,y \in G$.  }
\]  
Then $\psi_0 \in  L^{p'}(G; L^{p}(G;A))$ with 
$\| \psi_0 \|_{p,p'} = \|\varphi\|_{p}\|\eta\|_{p'}\|a\|$. Further, if $\xi = \xi_1 \otimes \xi_2 \in L^p(G) \otimes_p L^p(\mu)$ and $\Theta$ is as in \eqref{Theta_Map}, a direct computation gives that for any $x \in G$ we have 
\[
(K_{\psi_0} \xi )(x) = \varphi(x) (\eta \mid \xi_1) a(\xi_2) = (\Theta_{\varphi, \eta}  \xi_1)(x) a(\xi_2) = [(\Theta_{\varphi, \eta} \otimes a)\xi](x).
\]
Hence, $K_{\psi_0}  = \Theta_{\varphi, \eta} \otimes a \in \KG \otimes_p A$ and $\| K_{\psi_0} \|=\|\Theta_{\varphi, \eta}\|\|a\|=\|\psi_0\|_{p,p'}$. More generally, recall first that finite sums of pure tensors, like $\psi_0$, are dense in $L^p(G;L^{p'}(G;A))$. 
Next, for a fixed $n \in \Z_{\geq 0}$, we define $\psi_n \in  L^p(G;L^{p'}(G;A))$ by
\[
\psi_n \coloneqq \sum_{j=1}^n \varphi_j \otimes \eta_j  \otimes a_j \in  L^{p}(G) \otimes L^{p'}(G) \otimes A.
\]
By linearity we have $K_{\psi_n} \in \KG \otimes_p A$. Then, for any $\xi \in L^p(\nu_G \times \mu)$, using Hölder's inequality for the spaces $L^{p'}(G;A)$ and $L^p(G; L^p(\mu))$ in the 
third step below, we obtain
\begin{align*}
\| K_{\psi_n}\xi \|_p^p & = \int_G \| (K_{\psi_n}\xi)(x) \|_p^p dx \\
& = \int_G \Bigg\| \int_G \Big( \sum_{j=1}^n \varphi_j(x)\eta_j(y)a_j\Big)\xi(y) dy \Bigg\|_p^p dx\\
& \leq \int_G \Bigg( \int_G \Big\| \sum_{j=1}^n \varphi_j(x)\eta_j(y)a_j\Big\|_{A}^{p'} dy\Bigg)^{p/p'} \|\xi\|_p^p  dx \\
& = \| \psi_n\|_{p,p'}^p \| \xi \|_p^p.
\end{align*}
From here it follows at once that $K_\psi \in \KG \otimes_p A$ and $\| K_\psi \| \leq \| \psi\|_{p,p'}$ for any $\psi \in L^{p}(G;L^{p'}(G;A))$. Finally, the density claim also follows, for we know that elements of the form $K_{\psi_n} = \sum_{j=1}^n \Theta_{\varphi_j, \eta_j} \otimes a_j$ are dense in $\KG \otimes_p A$ by construction.
\end{proof}

\begin{corollary}\label{cor_contrLSP}
The map $\psi \mapsto K_\psi$ defines a contraction from $L^{p}(G; L^{p'}(G;A))$ to 
$\op{St}_{\op{l}}^p(A)$ with dense range.
\end{corollary}
\begin{proof}
Take any $\pi_A \in \op{Rep}_p(A)$, say $\pi_A \colon A \to \mathcal{B}(L^p(\mu_0))$. Since $\pi_A$ is contractive, using the same notation as in the proof 
of Lemma \ref{K_psi_Lemma}, we have 
\begin{align*}
\Big\| \sum_{j=1}^n \varphi_j(x)\eta_j(y)\pi_A(a_j)\Big\|_{\mathcal{B}(L^p(\mu_0))} 
& = \Big\| \pi_A\Big( \sum_{j=1}^n \varphi_j(x)\eta_j(y)a_j \Big)\Big\|_{\mathcal{B}(L^p(\mu_0))} \\
& \leq \Big\| \sum_{j=1}^n \varphi_j(x)\eta_j(y)a_j\Big\|_{A}.
\end{align*}
Therefore, the same argument used in Lemma \ref{K_psi_Lemma} (replacing $L^p(\mu)$ by $L^p(\mu_0)$) 
shows that $\| (\iota_\mathcal{K} \otimes \pi_A)(K_\psi) \| \leq \| \psi\|_{p,p'}$ for any $\psi \in L^{p}(G;L^{p'}(G;A))$. 
From here we get at once that $\| K_\psi \|_{\op{lst}} \leq \| \psi\|_{p,p'}$ for any $\psi \in L^{p}(G;L^{p'}(G;A))$, as wanted. 
The density part is again clear from the tensor product construction.
\end{proof}

\begin{definition} \label{mult_op}
For any measurable function $f \colon G \to M(A)$, satisfying 
\[
\| f \|_{\infty} \coloneqq \sup \{ \| f(x) \| \colon {x \in G} \}<\infty,
\]
we define an operator $m(f) \colon L^p(G; L^p(\mu)) \to L^p(G;L^p(\mu))$ by 
letting, for each $\xi \in L^p(G; L^p(\mu))$ and $x \in G$, 
\[
(m(f)\xi)(x) \coloneqq f(x)\xi(x). 
\]
\end{definition}
\begin{remark}\label{mult_op_rmk}
It is clear that $m(f) \in \Li( L^p(G; L^p(\mu)))$ with $\| m(f) \| \leq \| f\|_\infty$. 
Furthermore, if $f(x) \in \op{Inv}_1(M(A))$ for all $x \in G$, then it follows that $m(f)$ is invertible 
with inverse $m(f^{-1})$ (where $f^{-1}(x) \coloneqq f(x)^{-1}$)  and $\| m(f) \| =1$. 
For $\psi \in L^p(G; L^{p'}(G;A))$ and all $x,y \in G$, 
we define  
\begin{equation}\label{Eq_action fromulas}
(f \cdot \psi)(x,y) \coloneqq f(x)\psi(x,y), \ \ (\psi \cdot f)(x,y) \coloneqq \psi(x,y) f(y).
\end{equation}
It follows at once from \eqref{Eq:p,p'_norm} that both $\| f \cdot \psi \|_{p,p'} $ and 
$\| \psi \cdot f \|_{p,p'} $ are bounded by $\| f \|_\infty \|\psi \|_{p,p'}$. 
Hence each measurable function $f \colon G \to M(A)$, with $\| f \|_\infty < \infty$, continuously acts on the left and right on 
$L^p(G; L^{p'}(G;A))$ via the formulas in Equation \eqref{Eq_action fromulas}. 
Now, if we let $K_\psi$ be as in the statement of Lemma \ref{K_psi_Lemma} above,  direct computations show that,  for any $\psi \in L^p(G; L^{p'}(G;A))$,  
\[
m(f)K_\psi = K_{f \cdot \psi}, \ \ K_\psi m(f) = K_{\psi \cdot f}.
\]
Therefore, in view of Equation \eqref{Ma=Dc} and Corollary
\ref{cor_contrLSP}, we conclude that 
$m(f) \in M(\op{St}_{\op{l}}^p(A))$ and that $m(f) \in \op{Inv}_1(M(\op{St}_{\op{l}}^p(A)))$ whenever $f(x) \in \op{Inv}_1(M(A))$ for all $x \in G$. 
\end{remark}

\begin{definition}\label{LRrep}
For each $\zeta \in L^p(G)$ and $x,y \in G$, let 
\[
(\lambda^{p}_x \zeta)(y) \coloneqq \zeta(x^{-1}y),
\]
The map $\lambda^{p} \colon G \to \Li(L^p(G))$ is the \textit{$p$-left regular representation of $G$}.
\end{definition}
\begin{remark}\label{Rmk_lambda:_multi}
Observe that for each $x \in G$, the operator $\lambda_x^p$ is an invertible isometry, whence $\| \lambda_x^p\|=1$, with $(\lambda_{x}^p)^{-1}= \lambda_{x^{-1}}^p$. Moreover, if $\Theta$ is as in \eqref{Theta_Map}, for any $\varphi \in L^p(G)$, $\eta \in L^{p'}(\mu)$, and any $x \in G$ we have 
\[
\lambda^p_{x} \Theta_{\varphi, \eta} = \Theta_{\lambda^p_{x}\varphi , \eta}, \ 
\Theta_{\varphi, \eta}\lambda^p_{x} =\Theta_{\varphi, (\lambda^{p'}_{x})^{-1}\eta }.
\]
Hence, in view of Equation \eqref{Ma=Dc} it follows that $\lambda^p_x \in \op{Inv}_1(M(\KG))$ for 
all $x \in G$. 
\end{remark}
We are now ready to state and prove
one of the main applications of the results developed so far: 
the $p$-version of the Packer-Raeburn untwisting trick. 

\begin{theorem}\label{PacRae-TRICK}
Let $p \in (1,\infty)$, let
$A$ be a nondegenerate separably representable $L^p$-operator algebra with a cai,
and let $(G, A, \alpha, \sigma)$ be a twisted dynamical system. 
Then there is an action $\beta$ of $G$ on $\op{St}_{\op{l}}^p(A)$
such that $\op{St}_{\op{l}}^p(F^p(G, A, \alpha, \sigma))$ 
is isometrically isomorphic to the untwisted crossed product 
$F^p(G, \op{St}_{\op{l}}^p(A) , \beta)$.  
\end{theorem}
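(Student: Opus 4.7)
The plan is to combine the stability isomorphism from Proposition \ref{Prop:Stable} with an exterior equivalence argument via Corollary \ref{Cor:LpExtEquiv}. Since Proposition \ref{Prop:Stable} already gives
\[
\KG \otimes_p F^p(G, A, \alpha, \sigma) \cong F^p\big(G,\KG \otimes_p A,\op{id}_{\KG}\otimes\alpha,\, 1_{\KG}\otimes\sigma\big),
\]
it suffices to exhibit a strictly continuous $\theta \colon G \to \op{Inv}_1(M(\KG \otimes_p A))$ and an honest (untwisted) action $\beta \colon G \to \op{Aut}(\KG \otimes_p A)$ for which $(\op{id}_{\KG}\otimes\alpha,\, 1_{\KG}\otimes\sigma) \overset{\theta}{\sim} (\beta,\mathbf{1})$. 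Then Corollary \ref{Cor:LpExtEquiv} will identify the twisted crossed product above with $F^p(G, \KG \otimes_p A, \beta)$, yielding the theorem.

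The construction of $\theta$ will be the $L^p$ analogue of the classical Packer--Raeburn untwisting. Using separable $L^p$-representability I view $A$ isometrically inside some $\Li(L^p(\mu))$, so that $\KG \otimes_p A \subseteq \Li(L^p(\nu_G \times \mu))$. Guided by the form of the operator $u$ from Proposition \ref{IndReducedCovRepn} taken with $\pi_0 = \op{id}_A$, I define the norm-one measurable function $f_y \colon G \to \op{Inv}_1(M(A))$ by $f_y(x) := \alpha_x^{-1}(\sigma_{y,y^{-1}x})$, and set
\[
\theta_y := m(f_y)\,(\lambda^p_y \otimes 1_A).
\]
By Remark \ref{mult_op_rmk}, $m(f_y) \in \op{Inv}_1(M(\KG \otimes_p A))$ with inverse $m(f_y^{-1})$, and by Remark \ref{Rmk_lambda:_multi} together with a direct check on the dense family of operators $K_\psi$ supplied by Lemma \ref{K_psi_Lemma}, $\lambda^p_y \otimes 1_A \in \op{Inv}_1(M(\KG \otimes_p A))$. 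Therefore $\theta_y$ lives in $\op{Inv}_1(M(\KG \otimes_p A))$, and its strict continuity in $y$ follows from the strong continuity of $\alpha$ and the joint strict continuity of $\sigma$.

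With $\theta$ in place, I set $\beta_x := \op{Ad}(\theta_x) \circ (\op{id}_{\KG} \otimes \alpha_x)$. The exterior equivalence reduces to the cocycle identity
\[
\theta_{xy} = \theta_x\,(\op{id}_{\KG} \otimes \alpha_x)(\theta_y)\,(1_{\KG} \otimes \sigma_{x,y}),
\]
which I will verify by the same kind of computation used for Condition \eqref{CovC3} of Definition \ref{CovariantRep} in the proof of Proposition \ref{IndReducedCovRepn}; the only substantive input is the twisted action relation \eqref{x,yAd} and the $\sigma$-cocycle identity \eqref{ASzxy} from Definition \ref{TwistedBanachSystem}. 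From this identity together with $\theta_{1_G} = \op{id}$ (immediate from $\sigma_{1_G, x} = \op{id}_{M(A)}$), the pair $(\beta, \mathbf{1})$ satisfies all the axioms of Definition \ref{TwistedBanachSystem}, so $\beta$ is an honest action and $\theta$ implements the required exterior equivalence. Chaining Proposition \ref{Prop:Stable} and Corollary \ref{Cor:LpExtEquiv} then closes the argument. The main obstacle will be controlling how the extension of $\op{id}_{\KG} \otimes \alpha_x$ to $M(\KG \otimes_p A)$ interacts with $\theta_y = m(f_y)(\lambda^p_y \otimes 1_A)$; concretely, verifying that this extension sends $\theta_y$ to $m(\alpha_x \circ f_y)(\lambda^p_y \otimes 1_A)$ is what makes the cocycle computation reduce cleanly to the identities in Definition \ref{TwistedBanachSystem}.
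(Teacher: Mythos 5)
Your overall architecture is exactly the paper's: reduce to Proposition \ref{Prop:Stable}, then exhibit an exterior equivalence $(\op{id}_{\KG}\otimes\alpha,\, 1_{\KG}\otimes\sigma)\overset{\theta}{\sim}(\beta,\mathbf{1})$ and invoke Corollary \ref{Cor:LpExtEquiv}. However, your formula for $\theta$ is wrong, and the cocycle identity you promise to verify will not close. With $f_y(x)=\alpha_x^{-1}(\sigma_{y,y^{-1}x})$ your $\theta_y$ acts by $(\theta_y\xi)(x)=\alpha_x^{-1}(\sigma_{y,y^{-1}x})\xi(y^{-1}x)$; this is precisely the operator $u_y$ of the regular covariant representation from Equation \eqref{3_v} (with $\pi_0$ the defining representation), and as such it satisfies the \emph{representation} relation $\theta_x\theta_y=\widehat{\pi}(\sigma_{x,y})\theta_{xy}$, not the \emph{exterior-equivalence} relation $\theta_{xy}=\theta_x(\op{id}_{\KG}\otimes\alpha_x)(\theta_y)(1_{\KG}\otimes\sigma_{x,y})$; the two differ essentially by replacing $\sigma$ with $\sigma^{-1}$. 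To see the failure concretely, take $A=\C$, so $\alpha$ is trivial, $M(A)=\C$, and $\sigma$ is a continuous $\T$-valued $2$-cocycle (e.g.\ the Heisenberg cocycle on $\R^2$). Then $(\op{id}_{\KG}\otimes\alpha_x)(\theta_y)=\theta_y$ and your identity reads $\sigma_{x,x^{-1}z}\,\sigma_{y,y^{-1}x^{-1}z}\,\sigma_{x,y}=\sigma_{xy,y^{-1}x^{-1}z}$; combined with Condition \eqref{ASzxy}, which here gives $\sigma_{x,x^{-1}z}\sigma_{y,y^{-1}x^{-1}z}=\sigma_{x,y}\sigma_{xy,y^{-1}x^{-1}z}$, this forces $\sigma_{x,y}^2=1$ for all $x,y$ --- false for a general twist.

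The fix is to build $\theta$ out of $\sigma^{-1}$ rather than $\sigma$: the paper takes $\theta_x:=(\lambda_x^{p}\otimes\op{id}_A)\,m(\sigma_x^{-1})$ with $\sigma_x^{-1}(y):=\sigma_{x,y}^{-1}$, so that $(\theta_x\xi)(z)=\sigma_{x,x^{-1}z}^{-1}\xi(x^{-1}z)$ (note also that no $\alpha_z^{-1}$-conjugation is applied to the twist). With this choice, using $(\op{id}_{\KG}\otimes\alpha_x)(m(\sigma_y^{-1}))=m(\alpha_x\circ\sigma_y^{-1})$, the required identity reduces to $\alpha_x(\sigma_{y,y^{-1}x^{-1}z})\sigma_{x,x^{-1}z}=\sigma_{x,y}\sigma_{xy,y^{-1}x^{-1}z}$, which is exactly an instance of Condition \eqref{ASzxy}. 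The remaining ingredients of your outline --- strict continuity of $\theta$ via Lemma \ref{K_psi_Lemma} and dominated convergence, membership of $m(f)$ and $\lambda_x^{p}\otimes\op{id}_A$ in $\op{Inv}_1(M(\KG\otimes_p A))$, and the final chaining of Proposition \ref{Prop:Stable} with Corollary \ref{Cor:LpExtEquiv} --- match the paper's proof and are fine once $\theta$ is corrected.
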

\begin{proof}
Let $m$ be as in Definition \ref{mult_op} and define $\sigma_x^{-1} \colon G \to \op{Inv}_1(M(A))$ by letting for each $y \in G$, 
$\sigma_{x}^{-1}(y) \coloneqq \sigma_{x,y}^{-1}$. 
Define $\Sigma \colon G \to \op{Inv}_1(M(\op{St}_{\op{l}}^p(A)))$, such that if $x \in G$, 
\[
\Sigma_x \coloneqq \Sigma(x)\coloneqq m(\sigma_x^{-1}).  
\]
Next, let $\lambda^{p} \colon G \to \op{Inv}_1(M(\KG))$ be as in Definition \ref{LRrep} (see Remark \ref{Rmk_lambda:_multi}), and define $\theta \colon G \to \op{Inv}_1(M(\op{St}_{\op{l}}^p(A)))$
 by putting for each $x \in G$, 
\[
\theta_x \coloneqq  \theta(x) \coloneqq (\lambda_x^{p} \otimes \op{id}_A)\Sigma_x.
\]
We now show that $\Sigma$ and hence $\theta$ are strictly continuous. Take any sequence $(x_n)_{n=1}^{\infty}$ in $G$ 
with $x_n\to x \in G$. Using Remark \ref{mult_op_rmk}
and Corollary \ref{cor_contrLSP}
we obtain for any $\psi \in L^p(G;L^{p'}(G;A))$ 
\[
\| \Sigma_{x_n}K_\psi - \Sigma_{x}K_\psi \|_{\op{lsp}} \leq \| (\sigma_{x_n}^{-1}-\sigma_x^{-1})\cdot \psi \|_{p,p'}.
\]
This, together with the fact that 
$\sigma \colon G \times G \to \op{Inv}_1(M(A))$ is jointly strictly continuous and Dominated Convergence Theorem for $L^p(G;L^{p'}(G;A))$ show that
$\Sigma_{x_n}K_\psi \to \Sigma_xK_\psi$ in $\op{St}_{\op{l}}^p(A)$ for any $\psi \in L^p(G;L^{p'}(G;A))$. 
An analogous argument gives that 
$K_\psi\Sigma_{x_n} \to K_\psi\Sigma_x$ in $\op{St}_{\op{l}}^p(A)$ for any $\psi \in L^p(G;L^{p'}(G;A))$. 
Hence it follows from Corollary \ref{cor_contrLSP} that both $x \mapsto \Sigma_xK$ and  $x \mapsto K\Sigma_x$ are continuous maps 
$G \to  \op{St}_{\op{l}}^p(A)$ for any $K \in  \op{St}_{\op{l}}^p(A)$, and therefore $\Sigma$ is a strictly continuous map. This implies 
at once that the map $\theta$ is also strictly continuous. 

Finally, define $\beta \colon G \to \op{Aut}(\op{St}_{\op{l}}^p(A))$, so that for each $x \in G$, 
we put
\[
\beta_x \coloneqq \beta(x) \coloneqq \op{Ad}(\theta_x) \circ ( \op{id}_{\KG} \otimes \alpha )_x.
\]
 Since $\op{id}_{\KG} \otimes \alpha$ is strongly continuous and $\theta$ is strictly continuous, it follows that $\beta$ is strongly continuous. 
Then, the same algebraic manipulations done in the proof of 
\cite[Theorem 3.4]{PacRae89} show that $(\op{id}_{\KG} \otimes \alpha,  1_{\KG} \otimes  \sigma)  \overset{\theta}{\sim} (\beta, \mathbf{1})$, where $\mathbf{1}\colon G \times G \to \op{Inv}_1(M(\op{St}_{\op{l}}^p(A)))$ is the trivial twist $\mathbf{1}_{x,y} \coloneqq 1_{\op{St}_{\op{l}}^p(A)}$ for all $x,y \in G$.
The desired result now follows at once from combining Corollary \ref{Cor:LpExtEquiv} with Proposition \ref{Prop:Stable} and the fact that $F^p(G,\op{St}_{\op{l}}^p(A), \beta, \mathbf{1} ) = F^p(G,\op{St}_{\op{l}}^p(A), \beta)$ (see Remark \ref{Rmk_twist_1_ncp}). 
\end{proof}

We end the paper with an immediate consequence of the untwisting trick, which gives 
 that 
the K-theory of $L^p$-twisted crossed products can be computed 
using the already available tools for the untwisted case 
such as the ones developed in \cite[Section 6]{ncp2013CP}.

\begin{corollary}\label{cor: Ktheory}
Let $p \in (1,\infty)$, let
$A$ be a nondegenerate separably representable $L^p$-operator algebra with a cai,
and let $(G, A, \alpha, \sigma)$ be a twisted dynamical system. 
Then there is an action $\beta$ of $G$ on $\op{St}_{\op{l}}^p(A)$
such that 
\[
\op{K}_*( F^p(G, A, \alpha, \sigma) )\cong \op{K}_*(F^p(G, \op{St}_{\op{l}}^p(A) , \beta))
\]
\end{corollary}
\begin{proof}
$\op{St}_{\op{l}}^p(F^p(G, A, \alpha, \sigma))$ and $F^p(G, A, \alpha, \sigma)$ are Morita equivalent Banach algebras 
in the sense of Lafforgue (see \cite[Definition 1.1]{PAra15} ). The desired result now follows at once 
from combining Theorem \ref{PacRae-TRICK} with \cite[Theorem 1.2]{PAra15}.
\end{proof}

\bibliographystyle{plain}
\bibliography{LpTCP.bib}

\end{document}